\setlist[enumerate]{label=(\arabic*)}
\newtheorem{thm}{Theorem}[section]
\newtheorem{cor}[thm]{Corollary}
\newtheorem{lem}[thm]{Lemma}
\newtheorem{prop}[thm]{Proposition}
\theoremstyle{definition}
\newtheorem{defn}[thm]{Definition}
\theoremstyle{remark}
\newtheorem{rem}[thm]{Remark}
\numberwithin{equation}{section}
\title{\textbf{Singular sets on spaces with integral curvature bounds and diffeomorphism finiteness for manifolds}}
\author{Xin Qian}
\date{}
\begin{document}
\maketitle
\begin{abstract}
In this paper, we are concerned with noncollapsed Riemannian manifolds $(M^{n},g)$ with integral curvature bounds, as well as their Gromov-Hausdorff limits $(M^{n}_{i},g_{i})\xrightarrow{GH}(X,d)$.\ Our main result generalizes Cheeger's Hausdorff dimension estimate for the singular set in \cite{cheeger2003integral} and improve it into Minkowski dimension estimate in the spirit of Cheeger-Naber \cite{cheeger2013lower}.\ We also prove a difffeomorphism finiteness theorem for manifolds in the critical case, using Cheeger-Naber's methods in \cite{cheeger2015regularity}.
		 
			
\end{abstract}

\section{Introduction} 
Let $(M^{n},g)$ be a Riemannian manifold.\ For each $x\in M^{n}$, we denote $h(x)$ as the smallest eigenvalue for the Ricci tensor $Ric:T_{x}M\to T_{x}M$ and let $Ric_{-}(x)=\max\left\lbrace0,-h(x)\right\rbrace$.\ In this paper, we consider manifolds $(M^{n},g)$ satisfying 
 \begin{gather}
 	\int_{M^{n}}|Ric_{-}|^{p}\leqslant\lambda,\label{eq1.1}\\
 	vol(B(x,r))\geqslant vr^{n},\ \text{for all}\ x\in M^{n}\ \text{and}\ r\leqslant1,\label{eq1.2} \\
 	\int_{M^{n}}|Rm|^{q}\leqslant\Lambda,\label{eq1.3}
 \end{gather}
where $p>\frac{n}{2}$ and $1\leqslant q\leqslant\frac{n}{2}$.\ We will be particularly concerned with pointed Gromov-Hausdorff limts
\begin{align*}
	(M_{i}^{n},g_{i},x_{i})\xrightarrow{pGH}(X,d,x)
\end{align*}
of sequences of such manifolds.\ Sometimes we suppose in place of \eqref{eq1.1} that
\begin{align}
	\int_{M^{n}}|Ric|^{p}\leqslant\lambda.\label{eq1.4}
\end{align}
\indent The geometry of manifolds with bounded integral Ricci curvature has been studied extensively.\ In particular, Petersen-Wei derived a relative volume comparison theorem in \cite{petersen1997relative}.\ Afterward, under the non-collapsing condition \eqref{eq1.2}, Petersen-Wei \cite{petersen2001analysis} showed some almost rigidity results and Tian-Zhang \cite{tian2016regularity} developed a regularity theory for manifolds with bounded integral Ricci curvature.\ Also, the almost rigidity structures in collapsing case was discussed by Chen in \cite{chen2022segment}.\ Their results were directly derived from similar arguments as Cheeger-Colding \cite{cheeger1996lower,cheeger1997structure,cheeger2000structure} for manifolds with lower Ricci curvature bound.\\
\indent The main purpose of this note is to generalize Cheeger's Hausdorff dimension estimate for the singular set in \cite{cheeger2003integral}.\ Our main theorem is stated following.
\begin{thm}\label{thm1.1}
      Given $p>\frac{n}{2}$, $1\leqslant q\leqslant\frac{n}{2}$ and positive constants $v,\lambda$ and $\Lambda$, let $(M_{i}^{n},g_{i},x_{i})\xrightarrow{pGH}(X,d,x)$ be a Gromov-Hausdorff limit of manifolds satisfying \eqref{eq1.1}-\eqref{eq1.3}.\ Then the singular set $\mathcal{S}$ of $X$ satisfies
      \begin{align}
      	 \dim_{\mathcal{H}}\mathcal{S}\leqslant n-2q.\label{eq1.5}
      \end{align}
      Furthermore, the Hausdorff dimension can be improved to be the Minkowski dimension if we assume \eqref{eq1.4} instead of \eqref{eq1.1}.
\end{thm}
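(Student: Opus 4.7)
The plan is to follow Cheeger's strategy in \cite{cheeger2003integral}, transplanted to the Gromov-Hausdorff limit setting via the regularity theory of Tian-Zhang \cite{tian2016regularity}. The starting point is an $\epsilon$-regularity theorem: there exist $\epsilon_0 = \epsilon_0(n,p,q,v,\lambda)$ and $\delta_0 = \delta_0(n,p,q,v,\lambda)$ such that if $(M^n,g)$ satisfies \eqref{eq1.1}-\eqref{eq1.3} and $x \in M^n$, $r \le 1$ satisfy
\[
d_{GH}\bigl(B(x,r), B_r^n(0)\bigr) \le \delta_0 r
\qquad\text{and}\qquad
\frac{1}{r^{n-2q}}\int_{B(x,2r)} |Rm|^q < \epsilon_0,
\]
then $B(x,r/2)$ has uniform harmonic-radius bounds and in particular is smooth. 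This follows by combining Petersen-Wei's almost-rigidity \cite{petersen2001analysis} and Tian-Zhang's elliptic estimates in the integral Ricci setting (treating the $L^p$ Ricci defect as a perturbation) with standard elliptic regularity applied to the metric components in harmonic coordinates, where the $L^q$ Riemann bound enters as a right-hand side.

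Passing this to the limit, I would introduce the curvature density
\[
\Theta^q(x) := \limsup_{r\to 0}\,\liminf_{i\to\infty} \frac{1}{r^{n-2q}} \int_{B(x_i,r)} |Rm_i|^q, \qquad x_i \to x,
\]
and observe, via the $\epsilon$-regularity together with the integral almost-splitting results, that $\Theta^q(x) = 0$ forces $x$ to be a regular point of $X$. Hence $\mathcal{S} \subseteq \bigcup_{k\ge 1} E_k$ where $E_k := \{\Theta^q \ge 1/k\}$. A Vitali-type covering of $E_k$ at small scales, using the uniform mass budget $\int_{M_i}|Rm_i|^q \le \Lambda$ as the underlying measure, shows that $\mathcal{H}^{n-2q+\beta}(E_k) = 0$ for every $\beta > 0$. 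Summing over $k$ yields $\dim_{\mathcal{H}} \mathcal{S} \le n-2q$.

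For the Minkowski refinement under \eqref{eq1.4}, the two-sided integral Ricci bound upgrades the Petersen-Wei comparison to two-sided relative volume comparison, which in turn enables a quantitative cone-splitting principle in the spirit of Cheeger-Naber \cite{cheeger2013lower}. I would define the quantitative curvature strata
\[
\mathcal{S}^q_{r,\epsilon} := \Bigl\{x \in X : s^{2q-n}\,\liminf_{i} \int_{B(x_i,s)} |Rm_i|^q \ge \epsilon \text{ for all } s \in [r,1]\Bigr\}
\]
and run an inductive covering at geometric scales $r_j = 2^{-j}$ to establish $\operatorname{Vol}\bigl(B_r(\mathcal{S}^q_{r,\epsilon}) \cap B(x,1)\bigr) \le C(n,p,q,v,\lambda,\Lambda,\epsilon)\, r^{2q}$. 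Letting $r\to 0$ and then $\epsilon\to 0$ gives $\overline{\dim}_{\mathcal{M}}\mathcal{S} \le n-2q$. The main obstacle lies here: iterating cone-splitting through scales demands robust control after arbitrary rescaling, which \eqref{eq1.1} fails to supply because the scaling of $\int |Ric_-|^p$ is only preserved when matching upper control is available. The stronger hypothesis \eqref{eq1.4} keeps both upper and lower volume comparison—and hence the packing estimates underlying the quantitative stratification—effective through the induction, which is exactly why the Minkowski improvement requires it.
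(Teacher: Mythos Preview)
Your proposal has the right overall shape, but the $\epsilon$-regularity theorem you state is not the one that does the work, and without the correct version the central implication ``$\Theta^q(x)=0 \Rightarrow x\in\mathcal{R}$'' is unjustified. Your hypothesis already assumes $B(x,r)$ is $\delta_0 r$-close to a \emph{Euclidean} ball; under that alone the harmonic-radius bound follows from Theorem~\ref{thm2.10}, and the $|Rm|^q$ smallness is superfluous. What the argument actually requires (Theorem~\ref{thm3.2}, following \cite{cheeger2003integral} and \cite{132bfa331fc94e1ea9598bc9420082e4}) is the much harder statement: if $B(x,r)$ is close to a ball in $\mathbb{R}^{n-k}\times C(Y)$ with $2q>k$ ($k$ even) or $2q>k-1$ ($k$ odd), \emph{and} $r^{2q}\fint|Rm|^q$ is small, then $Y$ is close to the round sphere and hence the ball is close to Euclidean. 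Since $q\le n/2$ is subcritical, this is not ``standard elliptic regularity applied to the metric components in harmonic coordinates''; the $L^q$ Riemann bound cannot bootstrap the metric by itself. The mechanism is geometric: Cheeger's elliptic estimate (Theorem~\ref{thm2.19}) upgrades the $L^2$ Hessian bounds on the splitting maps $b_j$ and the cone function $u^2$ to $L^{2q}$, one controls the second fundamental forms of the level sets $b^{-1}(z)\cap U_r$ and $\phi^{-1}(z,r)$ (via Lemma~\ref{lem3.1}, which is where the parity condition on $k$ enters), and then the Chern--Gauss--Bonnet formula forces $\mathrm{vol}(Y)=\mathrm{vol}(S^{k-1})$. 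Without this step, a singular point whose tangent cone is a nontrivial $\mathbb{R}^{n-k}\times C(Y)$ could in principle still have $\Theta^q=0$, and your covering argument would not see it.

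For the Minkowski refinement, your curvature-only stratum $\mathcal{S}^q_{r,\epsilon}$ is also not the right object: a point can have small scaled curvature at \emph{some} $s\in[r,1]$ yet still fail to be $(n-2q+1,\epsilon,s)$-symmetric there, so it escapes your set while remaining quantitatively singular. The paper instead splits the bad set as $\mathcal{S}^{n-2q}_{\delta^2,\delta^{-1}\epsilon}\cup\{y:\epsilon^{2q}\fint_{B(y,3\epsilon)}|Rm|^q>\delta\}$, bounds the first piece by the Cheeger--Naber volume estimate on the \emph{symmetry}-based quantitative strata (Theorem~\ref{thm3.9}), and the second by a direct Vitali covering against the mass budget $\Lambda$. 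Finally, your stated reason for needing \eqref{eq1.4} is off: Petersen--Wei comparison \eqref{eq2.2} uses only $Ric_-$, so \eqref{eq1.1} already gives the almost-monotone volume ratio, doubling, and the packing estimates that drive Theorem~\ref{thm3.9}. The two-sided bound \eqref{eq1.4} is needed because the harmonic-radius $\epsilon$-regularity (Theorem~\ref{thm2.10}), and hence the rigid form Theorem~\ref{thm3.10} used to translate smallness of curvature plus symmetry into a definite harmonic radius, require it.
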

\indent Note that if $q>\frac{n}{2}$, then we can get a uniform lower bound on the harmonic radius (see Theorem \ref{thm2.14}) and hence, the singular set $\mathcal{S}$ must be empty.\ Indeed, the limit space $X$ is a $C^{\alpha}$-Riemannian manifold, for any $0<\alpha<2-\frac{n}{q}$.\ Thus, it is reasonable to consider $1\leqslant q\leqslant\frac{n}{2}$.\\
\indent We mention that we use Cheeger-Naber's methods in \cite{cheeger2013lower} to get volume estimates on the quantitative stratification and then derive the Minkowski dimension estimate for the singular set.\\
\indent In the critical case $q=\frac{n}{2}$, we know from Theorem \ref{thm1.1} that the singular set $\mathcal{S}$ consists of isolated points.\ Inspired by the bubble tree construction in \cite{anderson1991diffeomorphism} and \cite{cheeger2015regularity}, we prove the following diffeomorphism finiteness theorem, which generalizes Anderson-Cheeger's finiteness theorem in \cite{anderson1991diffeomorphism}.
\begin{thm}\label{thm1.2}
	Given $p>\frac{n}{2}$ and positive constants $v,D,\lambda$ and $\Lambda$, there exists $C=C(n,p,v,D,\lambda,\Lambda)$ such that the collection of closed Riemannian n-manifolds $M^{n}$ satisfying \eqref{eq1.2}, \eqref{eq1.4},
	\begin{align}
		diam(M^{n})\leqslant D,\label{eq1.6}
	\end{align}
and
\begin{align}
	\int_{M^{n}}|Rm|^{\frac{n}{2}}\leqslant\Lambda,\label{eq1.7}
\end{align}
	contains at most C diffeomorphism types.
\end{thm}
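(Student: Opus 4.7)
The plan is to argue by contradiction via a bubble tree construction, following the scheme of Anderson-Cheeger \cite{anderson1991diffeomorphism} and Cheeger-Naber \cite{cheeger2015regularity}. Suppose there are infinitely many manifolds $(M_{i}^{n},g_{i})$ satisfying the hypotheses of Theorem \ref{thm1.2} with pairwise distinct diffeomorphism types. By Gromov's precompactness theorem, valid under the integral Ricci bound \eqref{eq1.4} together with \eqref{eq1.2} and \eqref{eq1.6}, pass to a subsequence converging in the pointed Gromov-Hausdorff sense to a compact limit $(X,d)$. Theorem \ref{thm1.1} applied with $q=n/2$ gives $\dim_{\mathcal{H}}\mathcal{S}\leqslant 0$; combined with an $\varepsilon$-regularity statement coming from Theorem \ref{thm2.14} (a point with $\int_{B(x,r)}|Rm|^{n/2}<\varepsilon_{0}$ has a definite lower bound on its harmonic radius and is therefore regular), each singular point must absorb at least $\varepsilon_{0}$ of the $L^{n/2}$-curvature energy. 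The bound \eqref{eq1.7} then forces $\mathcal{S}=\{s_{1},\dots,s_{N}\}$ to be a finite set with $N\leqslant\Lambda/\varepsilon_{0}$.

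Next I would examine the topology concentrated around each $s_{k}$ by rescaling. On the regular part $X\setminus\mathcal{S}$ the harmonic radius is locally bounded below, so the convergence $M_{i}\to X$ is $C^{\alpha}$ in harmonic coordinates on any compact subset; in particular, for large $i$ any compact $K\Subset X\setminus\mathcal{S}$ embeds diffeomorphically into $M_{i}$ with metric close to the pull-back. To see what is carried near $s_{k}$, choose basepoints $p_{i,k}\to s_{k}$ and scales $r_{i,k}\to 0$ so that $(M_{i},r_{i,k}^{-2}g_{i},p_{i,k})$ converges in pointed Gromov-Hausdorff to a nontrivial complete bubble $(Y_{k},d_{k},y_{k})$. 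Since $\int|Rm|^{n/2}$ is scale-invariant, curvature energy is preserved under rescaling and each bubble carries at least $\varepsilon_{0}$ of it. The bubble $Y_{k}$ in turn has finite singular set by the same argument, and one iterates. This yields a rooted bubble tree whose vertices are complete spaces with finitely many isolated singularities; the energy accounting bounds the total number of vertices by $\Lambda/\varepsilon_{0}$, and hence both the depth and the width of the tree.

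The counting argument is then that the diffeomorphism type of $M_{i}$ is determined, up to finitely many choices, by the combinatorial structure of the bubble tree together with the diffeomorphism types of the smooth pieces at each vertex. For large $i$, $M_{i}$ decomposes into compact pieces, one per vertex, each of which is diffeomorphic to a large compact subset of the smooth part of the corresponding limit or bubble; these pieces are glued to one another along annular neck regions, which we expect to be diffeomorphic to $S^{n-1}\times[0,1]$. Each smooth piece lies in a precompact family by $C^{\alpha}$-convergence on regular parts and therefore represents only finitely many diffeomorphism types, while the neck identifications are determined up to isotopy by the Gromov-Hausdorff approximations. Summing over the finitely many tree shapes produces only finitely many diffeomorphism types in the tail of the sequence, contradicting the assumption.

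The hard part will be the neck theorem: showing that each annular transition between a parent piece and a child bubble is actually diffeomorphic to $S^{n-1}\times[0,1]$ via a map which is nearly standard at the natural scale. In the Einstein case of \cite{cheeger2015regularity} this rests on strong elliptic regularity for the Einstein equation, which is not available under our integral hypotheses \eqref{eq1.1}-\eqref{eq1.3}. Instead, I would proceed via quantitative almost-rigidity to the flat cone $\mathbb{R}^{n}$ on any annulus where $\int|Rm|^{n/2}$ is sufficiently small, combining volume convergence à la Petersen-Wei \cite{petersen1997relative} with the harmonic radius estimate of Theorem \ref{thm2.14} to upgrade the Gromov-Hausdorff closeness into an honest diffeomorphism of annuli. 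This analytic neck step is the core difficulty; once it is in place, the combinatorial and counting portions proceed essentially as in the Anderson-Cheeger / Cheeger-Naber framework.
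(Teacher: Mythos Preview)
Your bubble-tree framework is the right one, but there is a genuine gap: the neck regions are \emph{not} diffeomorphic to $S^{n-1}\times[0,1]$, and the relevant flat cone is \emph{not} $\mathbb{R}^{n}$. The singular points of the limit (and of the bubbles) are orbifold points; the tangent cone there is $\mathbb{R}^{n}/\Gamma$ for a finite subgroup $\Gamma\subset O(n)$ with $|\Gamma|\leqslant C(n,v)$, and correspondingly each neck is diffeomorphic to $(0,1)\times S^{n-1}/\Gamma$. This is exactly what the paper's Proposition \ref{prop4.1} establishes: under a volume-ratio pinching at a good scale, the annulus is $C^{\alpha}$-close to an annulus in $\mathbb{R}^{n}/\Gamma$, the flatness of the cone being forced by the scale invariance of $\int|Rm|^{n/2}$ on the limit cone. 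If you insist on $S^{n-1}$ your neck diffeomorphism step will simply be false in general, and the gluing picture breaks down.

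Beyond this, your approach differs from the paper's in two structural ways. First, you argue by contradiction and pass to limits (the Anderson--Cheeger scheme), whereas the paper follows Cheeger--Naber and gives a direct, effective decomposition of every manifold in the class (Theorem \ref{thm4.8}); the paper's version works on $M$ itself using the almost-monotone volume ratio $\mathcal{V}_{r}(x)$ and the finiteness of ``bad scales'' (Lemma \ref{lem4.4}), never needing to form a limit space. Second, your termination of the tree relies on energy quantization ($\varepsilon_{0}$ per bubble), while the paper terminates via the strict drop $|\Gamma_{i}^{l}|\leqslant|\Gamma_{j}^{l-1}|-1$ along each edge (Lemma \ref{lem4.6} and condition (6) of Theorem \ref{thm4.8}), which bounds the depth by $m(n,v)$ independently of $\Lambda$. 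The neck hypothesis in the paper is volume-ratio pinching at a scale, not smallness of $\int_{A}|Rm|^{n/2}$ on the annulus; this is what allows the Neck Theorem \ref{thm4.5} to hold with $\delta$ independent of the ratio $r_{1}/r_{2}$, which is precisely the uniformity you flagged as the ``hard part.''
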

\indent Note that the all scale non-collapsing condition \eqref{eq1.2} is essential and cannot be replaced by a definite scale non-collapsing condition such as $vol(B(x,1))\geqslant v$.\ Indeed, due to an example of Yang \cite{yang1992convergence}, for any $p\geqslant\frac{n}{2}$, there exists a sequence of n-manifolds $M_{k}$ such that 
$\left\|Rm\right\|_{p}\leqslant\lambda,\ diam\leqslant D,\ vol\geqslant v$, but $b_{2}(M_{k})\to\infty$, as $k\to\infty$.\ In fact, $vol(B(x_{k},r))=o(r^{n})$ for some $x_{k}\in M_{k}$.\ In \cite{yang1992convergence, yang1992convergence2}, Yang also proved an orbifold type limit theorem (see Theorem \ref{thm2.17}), which was analogous to Anderson's result in \cite{anderson1990convergence}.\\
\indent The paper is organized as follows.\ In Section \ref{2}, we supply some known results about manifolds with integral curvature bounds.\ In Section \ref{3}, we give an $\epsilon$-regularity theorem and prove Theorem \ref{thm1.1}.\ In Section \ref{4}, we prove the finite diffeomorphism statement of Theorem \ref{thm1.2} by using the methods in \cite{cheeger2015regularity}.
\\\\
$\mathbf{\boldsymbol{\mathbf{Acknowledgement.}\mathbf{}}}$
The author would like to thank Ruobing Zhang for helpful suggestions on this research project.

\section{Preliminaries}\label{2}
In this section, we recall some notions and properties we need for manifolds with integral curvature bounds.
\subsection{Relative volume comparison}	
	Consider an $n$-manifold $M$.\ For each $x\in M$, let $h(x)$ denote the smallest eigenvalue for the Ricci tensor $Ric:T_{x}M\to T_{x}M$ and $Ric_{-}(x)=\max\left\lbrace0,-h(x)\right\rbrace$.\ Then Petersen-Wei proved the following relative volume comparison theorem in \cite{petersen1997relative}.
\begin{thm}\label{thm2.1}
	Given $p>\frac{n}{2}$, there exists $C(n,p)$ such that for any $r_{2}>r_{1}>0$,
	\begin{align}
		\left(\dfrac{vol(B(x,r_{2}))}{r_{2}^{n}}\right)^{\frac{1}{2p}}-\left(\dfrac{vol(B(x,r_{1}))}{r_{1}^{n}}\right)^{\frac{1}{2p}}\leqslant C(n,p)\left(r_{2}^{2p-n}\int_{B(x,r_{2})}|Ric_{-}|^{p}\right)^{\frac{1}{2p}}.\label{eq2.1}
	\end{align}
	Equivalently,
	\begin{align}\label{eq2.2}
		\left(\dfrac{vol(B(x,r_{1}))}{r_{1}^{n}}\right)^{\frac{1}{2p}}\geqslant\left(\dfrac{vol(B(x,r_{2}))}{r_{2}^{n}}\right)^{\frac{1}{2p}}\left(1-C(n,p)r_{2}\left(\fint_{B(x,r_{2})}|Ric_{-}|^{p}\right)^{\frac{1}{2p}}\right).
	\end{align}
\end{thm}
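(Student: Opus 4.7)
My plan is to follow the now-standard Petersen--Wei argument: derive a pointwise Riccati-type inequality along radial geodesics, integrate it with Hölder's inequality (using $p>\tfrac{n}{2}$ to ensure the right exponents are integrable), and then promote the radial estimate to the volume ratio.

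I begin by working in polar normal coordinates $(r,\theta)\in(0,\infty)\times S^{n-1}$ based at $x$. Write $d\mathrm{vol}=\mathcal{A}(r,\theta)\,dr\,d\theta$ up to the cut locus, with $h(r,\theta)=\partial_{r}\log\mathcal{A}$ the radial mean curvature. The classical Euclidean comparison is $\mathcal{A}_{0}(r)=r^{n-1}$, $h_{0}(r)=(n-1)/r$. The Riccati inequality $h'+h^{2}/(n-1)\leq -\mathrm{Ric}(\partial_{r},\partial_{r})$, subtracted from the identity satisfied by $h_{0}$, yields for $\psi(r,\theta):=(h-h_{0})_{+}$ the inequality $\psi'+\tfrac{2}{r}\psi\leq Ric_{-}(\partial_{r},\partial_{r})$, where I use $h+h_{0}\geq 2h_{0}$ wherever $\psi>0$. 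Multiplying by $r^{2}$ gives the clean form $(r^{2}\psi)'\leq r^{2}\,Ric_{-}(\partial_{r},\partial_{r})$, and integrating from $0$ together with Fubini converts this into the radial estimate
\[
\int_{0}^{r}\psi(s,\theta)\,ds\;\leq\;\int_{0}^{r}s\,Ric_{-}(s,\theta)\,ds.
\]
Hölder's inequality with conjugate exponent $q=p/(p-1)$, applied to the splitting $s\cdot Ric_{-}=s^{1-(n-1)/p}\cdot s^{(n-1)/p}Ric_{-}$, produces
\[
\int_{0}^{r}\psi(s,\theta)\,ds\;\leq\;C(n,p)\,r^{2-n/p}\Bigl(\int_{0}^{r}Ric_{-}^{p}(s,\theta)\,s^{n-1}\,ds\Bigr)^{1/p}.
\]
The assumption $p>\tfrac{n}{2}$ is exactly what forces $q(1-(n-1)/p)+1=2-n/p>0$, giving finiteness of the dual factor; any weaker integrability would break this step.

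To pass to a volume comparison I use $\mathcal{A}(r,\theta)/\mathcal{A}_{0}(r)\leq\exp\!\bigl(\int_{0}^{r}\psi\,ds\bigr)$ beyond the cut locus (and $0$ past it, which preserves all inequalities). Raising everything to the power $1/(2p)$, combining with $e^{x/(2p)}-1\lesssim x/(2p)$ on the relevant range, and then applying Minkowski's integral inequality in $\theta\in S^{n-1}$ converts the radial $L^{p}$ bound into a bound on
\[
\Bigl(\tfrac{vol(B(x,r))}{r^{n}}\Bigr)^{1/(2p)}-\Bigl(\tfrac{vol(B(x,r'))}{(r')^{n}}\Bigr)^{1/(2p)},\qquad r>r',
\]
by an expression of the form $C(n,p)\bigl(r^{2p-n}\!\int_{B(x,r)}|Ric_{-}|^{p}\bigr)^{1/(2p)}$. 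The factor $r^{2p-n}$ is precisely the product of the $r^{2-n/p}$ from Hölder and the $r^{n-1}$ weight absorbed into the volume integral, all raised to the $1/(2p)$ power. Sending $r'\to r_{1}$ and $r\to r_{2}$ yields \eqref{eq2.1}; the equivalence with \eqref{eq2.2} follows by elementary algebra after rescaling.

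The main obstacle, in my view, is the careful bookkeeping in the last step: one has to interchange the Hölder application along radial geodesics with the spherical integration so that the weight $s^{n-1}\,ds\,d\theta$ reconstitutes the volume measure on $B(x,r_{2})$, while simultaneously producing the exponent $1/(2p)$ rather than $1/p$. Choosing $2p$ (instead of $p$) on the left is what makes the Minkowski step work against the concavity of $x\mapsto x^{1/(2p)}$ for $2p>1$. The cut-locus issue, although often glossed over, also deserves care: I would handle it by extending $\mathcal{A}$ by zero and verifying that the monotonicity used in integrating $(r^{2}\psi)'$ is preserved, a standard but essential technical point.
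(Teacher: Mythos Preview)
The paper does not give its own proof of this statement; it is quoted from Petersen--Wei \cite{petersen1997relative} as background. Your sketch reproduces the correct opening moves of their argument---the Riccati inequality, the reduction to $\psi=(h-h_0)_+$, the differential inequality $(r^{2}\psi)'\le r^{2}\,Ric_-$, and the radial H\"older step using $p>n/2$---so in broad outline you are following the intended route.

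The gap is in the passage you yourself flag as ``the main obstacle,'' and your proposed resolution does not close it. Two concrete issues. First, the linearization $e^{x/(2p)}-1\lesssim x/(2p)$ requires an a priori bound on $x=\int_0^{r}\psi\,ds$, which you do not have; nothing so far prevents $\mathcal{A}/\mathcal{A}_0$ from being large along some rays. Second, what is actually needed to differentiate $\bigl(\omega(r)/r^{n-1}\bigr)^{1/(2p)}$ and integrate in $r$ is control of $\int_{S^{n-1}}\psi(r,\theta)\,\mathcal{A}(r,\theta)\,d\theta$ at each fixed $r$, not an $L^{1}$-in-$r$ bound on $\psi$ along individual rays; ``Minkowski in $\theta$'' applied to your ray estimate does not produce this, because the weight $\mathcal{A}(r,\theta)$ sits at radius $r$ while your integral of $Ric_-$ runs over $s\le r$. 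In Petersen--Wei this step is handled by a separate ODE argument for a sphere-integrated quantity, in which the quadratic term $\psi^{2}/(n-1)$ in the Riccati inequality (which you discarded) is retained and, together with $p>n/2$, used to absorb the error terms; the exponent $1/(2p)$ then emerges from that integral inequality rather than from a post hoc root-and-Minkowski manoeuvre. Filling this in is the substance of the proof.
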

\begin{rem}\label{rem2.2}
By inequality \eqref{eq2.1}, if we assume $k(p,r_{2}):=\sup\limits_{x\in M}r_{2}^{2}\left(\fint_{B(x,r_{2})}|Ric_{-}|^{p}\right)^{\frac{1}{p}}$ is sufficiently small, then the volume doubling property can be easily derived (see \cite{petersen2001analysis}).\ Many works on integral Ricci curvature assume this smallness (see \cite{chen2022segment,dai2018local,petersen2001analysis} for example) and as pointed out in \cite{dai2018local}, this smallnes is necessary in order to get $L^{p}$ version of Cheeger-Colding's theory, which will be reviewed later.\ Also, under our assumption \eqref{eq1.1} and \eqref{eq1.2}, $k(p,r)\leqslant Cr^{2-\frac{n}{p}}\left\|Ric_{-}\right\|_{p}$, which is always small when the radius $r$ in consideration is small.\ This was used by Tian-Zhang in \citenum{tian2016regularity} to study almost rigidity and regularity under integral Ricci curvature bounds.
\end{rem}
Letting $r_{1}\to0$, we get the following corollary.
\begin{cor}\label{cor2.3}
	Under the asssumption \eqref{eq1.1}, we have the volume bound
	\begin{align}
		vol(B(x,r))\leqslant C(n,p)(r^{n}+\lambda r^{2p}),\ \ \forall r>0.\label{eq2.3}
	\end{align}
If we further assume \eqref{eq1.2}, then we get the volume doubling property
\begin{align}
	\frac{vol(B(x,r_{2}))}{vol(B(x,r_{1}))}\leqslant C(n,p,v,R,\lambda)\dfrac{r_{2}^{n}}{r_{1}^{n}},\ \ \forall r_{1}<r_{2}\leqslant R,\ r_{1}\leqslant1.\label{eq2.4}
\end{align}
\end{cor}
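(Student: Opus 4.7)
The plan is to derive both statements directly from Theorem \ref{thm2.1}. For the volume upper bound \eqref{eq2.3}, I would apply \eqref{eq2.1} with $r_{2}=r$ and let $r_{1}\to0$. Since $M$ is a smooth Riemannian manifold, $vol(B(x,r_{1}))/r_{1}^{n}\to\omega_{n}$ as $r_{1}\to0$, where $\omega_{n}$ is the volume of the unit ball in $\mathbb{R}^{n}$. Using \eqref{eq1.1} to bound the right-hand side, this yields
$$\left(\dfrac{vol(B(x,r))}{r^{n}}\right)^{\frac{1}{2p}}\leqslant\omega_{n}^{\frac{1}{2p}}+C(n,p)\bigl(r^{2p-n}\lambda\bigr)^{\frac{1}{2p}}.$$
Raising to the $2p$-th power via the elementary inequality $(a+b)^{2p}\leqslant 2^{2p-1}(a^{2p}+b^{2p})$ and multiplying by $r^{n}$ gives \eqref{eq2.3} with constant depending only on $n$ and $p$.

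For the doubling estimate \eqref{eq2.4}, I would simply combine this upper bound with the noncollapsing hypothesis \eqref{eq1.2}. For $r_{2}\leqslant R$, part (1) gives $vol(B(x,r_{2}))\leqslant C(n,p)(1+\lambda R^{2p-n})r_{2}^{n}$, while $r_{1}\leqslant 1$ together with \eqref{eq1.2} gives $vol(B(x,r_{1}))\geqslant vr_{1}^{n}$. Taking the ratio yields \eqref{eq2.4} with the constant $C(n,p,v,R,\lambda)=C(n,p)(1+\lambda R^{2p-n})/v$.

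There is no serious obstacle; both bounds follow by elementary manipulation once Theorem \ref{thm2.1} is in hand. The only point worth flagging is that the positive exponent $2p-n>0$ (guaranteed by $p>\frac{n}{2}$) is crucial: it is what prevents the curvature correction from blowing up as $r\to0$ in part (1), and it is what allows the $\lambda$-dependent term to be absorbed into a constant depending on $R$ in part (2). Unlike many references discussed in Remark \ref{rem2.2}, no smallness assumption on the radius is required here, since we are willing to let the constant depend on $\lambda$, $R$, and $v$.
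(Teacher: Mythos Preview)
Your proposal is correct and matches the paper's approach: the paper merely indicates ``letting $r_{1}\to 0$'' before stating the corollary, and your argument fills in exactly those details for \eqref{eq2.3}, then derives \eqref{eq2.4} by the obvious pairing of the upper bound with the noncollapsing lower bound \eqref{eq1.2}.
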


\subsection{Almost rigidity structure}	
Let $M^{n}$ be a Riemannian manifold satisfying \eqref{eq1.1} and \eqref{eq1.2} with $x\in M^{n}$.\ Denote $\omega_{n}$ to be the volume of the unit ball in $\mathbb{R}^{n}$.\ Following lines of Cheeger-Colding \cite{cheeger1996lower,cheeger1997structure, cheeger2000structure} and Colding \cite{colding1997ricci}, one may obtain the following almost rigidity properties.

\begin{thm}[Volume cnvergence,\cite{petersen2001analysis}]\label{thm2.4}
    $\forall\epsilon>0$, there exists $\delta=\delta(n,p,v,\lambda,\epsilon)$ and $r_{0}=r_{0}(n,p,v,\lambda,\epsilon)$, such that if $d_{GH}(B(x,r),B(0^{n},r))\leqslant\delta r$, for some $r\leqslant r_{0}$, then $vol(B(x,r))\geqslant(1-\epsilon)\omega_{n}r^{n}$.
\end{thm}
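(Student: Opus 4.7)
The plan is to argue by contradiction after rescaling to the small $L^p$ Ricci regime, and then combine the Petersen--Wei relative volume comparison (Theorem \ref{thm2.1}) with a Gromov--Hausdorff packing argument in the spirit of Cheeger--Colding. Suppose the conclusion fails: there exist $\epsilon>0$, manifolds $(M_i^n, g_i)$ satisfying \eqref{eq1.1}--\eqref{eq1.2}, points $x_i\in M_i$, and radii $r_i\leqslant 1/i$ with $d_{GH}(B(x_i,r_i),B(0^n,r_i))\leqslant r_i/i$ but $vol(B(x_i,r_i))<(1-\epsilon)\omega_n r_i^n$. Rescale $\tilde g_i=r_i^{-2}g_i$; because $2p>n$,
\begin{align*}
\int_{M_i^n}|Ric_-(\tilde g_i)|^p\,dvol_{\tilde g_i}=r_i^{2p-n}\int_{M_i^n}|Ric_-(g_i)|^p\,dvol_{g_i}\leqslant r_i^{2p-n}\lambda\longrightarrow 0,
\end{align*}
while $d_{GH}(B_{\tilde g_i}(x_i,1),B(0^n,1))\leqslant 1/i$ and $vol_{\tilde g_i}(B(x_i,1))<(1-\epsilon)\omega_n$. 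It then suffices to show $vol_{\tilde g_i}(B(x_i,1))\to\omega_n$.

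The upper half of this convergence follows directly from \eqref{eq2.1}. Applied on each smooth manifold $(M_i,\tilde g_i)$ with $r_2=1$ and $r_1\to 0$, and using that $vol_{\tilde g_i}(B(x_i,\rho))/\rho^n\to\omega_n$ at any smooth point as $\rho\to 0$, one obtains
\begin{align*}
vol_{\tilde g_i}(B(x_i,1))^{1/2p}\leqslant\omega_n^{1/2p}+C(n,p)\Big(\int_{B_{\tilde g_i}(x_i,1)}|Ric_-(\tilde g_i)|^p\Big)^{1/2p}.
\end{align*}
The right-hand error tends to zero by Step~1, giving $\limsup_i vol_{\tilde g_i}(B(x_i,1))\leqslant\omega_n$. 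The identical reasoning produces a uniform Bishop-type bound $vol_{\tilde g_i}(B(p,s))/s^n\leqslant\omega_n+o(1)$ for every $p\in M_i$ and every scale $s\leqslant 1$.

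The matching lower bound is the main obstacle, because \eqref{eq2.1} only compares volumes in one direction. The approach is a packing argument: fix $\eta\in(0,1)$, pick a maximal $2s$-separated set $\{q_k\}\subset B(0^n,1-\eta)$ so that $\sum_k\omega_n s^n\geqslant(1-\eta)^n\omega_n-O(s)$, and lift each $q_k$ to $p_k^{(i)}\in B(x_i,1)$ via the $1/i$-GH approximation. For $i$ large the balls $B(p_k^{(i)},s)$ are pairwise disjoint and contained in $B(x_i,1)$, so
\begin{align*}
vol_{\tilde g_i}(B(x_i,1))\geqslant\sum_k vol_{\tilde g_i}(B(p_k^{(i)},s)).
\end{align*}
One then needs $vol_{\tilde g_i}(B(p_k^{(i)},s))\geqslant(\omega_n-o(1))s^n$ uniformly in $k$, i.e.\ that Bishop's inequality is asymptotically saturated at every lifted center. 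This is precisely the continuity of Hausdorff $n$-measure along noncollapsed pGH sequences with vanishing $L^p$ Ricci, which is the heart of the Petersen--Wei theory \cite{petersen2001analysis}; the rescaled smallness of $\int|Ric_-|^p$ together with the non-collapsing \eqref{eq1.2} promotes the GH convergence $(M_i,\tilde g_i,x_i)\to(\mathbb{R}^n,0)$ into convergence of Riemannian volume on unit balls, via an iterated covering argument that uses the uniform upper bound of the previous paragraph to prevent any single ball from falling below $\omega_n s^n$ by more than $o(1)s^n$. Letting $i\to\infty$ and then $s\to 0$, $\eta\to 0$ yields $\liminf_i vol_{\tilde g_i}(B(x_i,1))\geqslant\omega_n$, contradicting the choice of the sequence.
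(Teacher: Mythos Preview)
The paper does not prove Theorem~\ref{thm2.4}; it is quoted from \cite{petersen2001analysis} as a background result, so there is no in-paper argument to compare against. That said, your sketch has a genuine gap.

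The upper bound step is correct and follows from \eqref{eq2.1} exactly as you write. The problem is the lower bound. Your packing argument reduces the inequality $vol_{\tilde g_i}(B(x_i,1))\geqslant(1-o(1))\omega_n$ to the \emph{same} inequality at scale $s$ for each lifted center $p_k^{(i)}$, and you then appeal to ``the continuity of Hausdorff $n$-measure \dots which is the heart of the Petersen--Wei theory.'' But that continuity statement \emph{is} Theorem~\ref{thm2.4} in this setting, so the reasoning is circular. The phrase ``an iterated covering argument that uses the uniform upper bound to prevent any single ball from falling below $\omega_n s^n$'' does not close the loop: a Bishop-type upper bound on every ball, together with GH-closeness to Euclidean space, does not by itself force a lower volume bound. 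A covering only controls volume from above, and a packing only yields $vol(B(x_i,1))\geqslant\sum_k vol(B(p_k^{(i)},s))$, which is useless without already knowing each summand is near $\omega_n s^n$. Iterating does not help, because at every smaller scale you face the identical missing lower bound.

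The actual proof in \cite{petersen2001analysis}, following Colding \cite{colding1997ricci}, is analytic rather than purely metric. One uses the GH-closeness to build an $\epsilon$-splitting map $b=(b_1,\dots,b_n):B(x,1)\to\mathbb{R}^n$ as in Theorem~\ref{thm2.7} with $k=n$, then uses the Bochner-type estimates encoded in \eqref{eq2.5} to show that $\langle\nabla b_i,\nabla b_j\rangle$ is close to $\delta_{ij}$ in $L^2$, so that the Jacobian of $b$ is close to $1$ on most of the ball; a degree/coarea argument then shows $b$ is almost volume-preserving onto a slightly smaller Euclidean ball. This analytic input---the construction and gradient/Hessian control of the splitting map---is precisely the missing idea in your outline.
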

\begin{thm}[Almost maximal volume implies GH-close,\cite{petersen2001analysis,tian2016regularity}]\label{thm2.5}
$\forall\epsilon>0$, there exists $\delta=\delta(n,p,v,\lambda,\epsilon)$ and $r_{0}=r_{0}(n,p,v,\lambda,\epsilon)$, such that if $vol(B(x,r))\geqslant(1-\delta)\omega_{n}r^{n}$, for some $r\leqslant r_{0}$, then $d_{GH}(B(x,r),B(0^{n},r))\leqslant\epsilon r$.
\end{thm}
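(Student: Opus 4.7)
The plan is to argue by contradiction using the standard compactness argument for almost rigidity results. Suppose the conclusion fails. Then for some $\varepsilon_{0}>0$ there exist sequences of manifolds $(M_{i}^{n},g_{i})$ satisfying \eqref{eq1.1} and \eqref{eq1.2}, points $x_{i}\in M_{i}^{n}$, and radii $r_{i}\to 0$ (we may shrink $r_{0}$) with
\begin{align*}
\mathrm{vol}(B(x_{i},r_{i}))\geqslant (1-\tfrac{1}{i})\omega_{n}r_{i}^{n},\qquad d_{GH}\bigl(B(x_{i},r_{i}),B(0^{n},r_{i})\bigr)>\varepsilon_{0}r_{i}.
\end{align*}
Rescale by setting $\tilde g_{i}=r_{i}^{-2}g_{i}$. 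Under this rescaling the unit ball keeps almost maximal volume and the noncollapsing is preserved on all scales $\leqslant r_{i}^{-1}$, while the integral curvature term scales as
\begin{align*}
\int_{M_{i}^{n}}|Ric_{-}^{\tilde g_{i}}|^{p}\,d\mathrm{vol}_{\tilde g_{i}}=r_{i}^{2p-n}\int_{M_{i}^{n}}|Ric_{-}^{g_{i}}|^{p}\,d\mathrm{vol}_{g_{i}}\leqslant r_{i}^{2p-n}\lambda\longrightarrow 0,
\end{align*}
since $2p>n$. Thus in the rescaled picture the $L^{p}$ Ricci defect is actually going to zero, not merely bounded.

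Next I would extract a pointed Gromov-Hausdorff limit. By the volume doubling property on bounded scales from Corollary \ref{cor2.3} (applied after rescaling), Gromov's precompactness gives a subsequence
\begin{align*}
(M_{i}^{n},\tilde g_{i},x_{i})\xrightarrow{pGH}(X,d,x_{\infty}),
\end{align*}
and by construction $d_{GH}(B(x_{\infty},1),B(0^{n},1))\geqslant\varepsilon_{0}$. I would then combine two applications of the Petersen--Wei inequality \eqref{eq2.1} on the rescaled manifolds. First, sending $r_{2}=1$ and $r_{1}\to 0$ together with volume continuity (Theorem \ref{thm2.4}, which applies precisely because we now have almost maximal volume and vanishing integral curvature) shows that in the limit
\begin{align*}
\mathrm{vol}(B(x_{\infty},r))=\omega_{n}r^{n}\quad\text{for all }0<r\leqslant 1,
\end{align*}
i.e.\ $B(x_{\infty},1)$ is a volume cone with Euclidean density.

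Finally, I would invoke the volume cone implies metric cone principle in the integral Ricci setting, which is exactly the almost rigidity mechanism developed in \cite{petersen2001analysis} and \cite{tian2016regularity} after the work of Cheeger--Colding: a noncollapsed limit of manifolds whose rescaled integral Ricci defect tends to zero and whose volume equals the Euclidean value on every subscale must be isometric to the Euclidean ball. This forces $B(x_{\infty},1)$ to coincide with $B(0^{n},1)$, contradicting $d_{GH}(B(x_{\infty},1),B(0^{n},1))\geqslant\varepsilon_{0}$. The main obstacle in the argument is the last step, namely producing a clean volume-cone rigidity statement in the $L^{p}$ Ricci setting; however, since the rescaling drives the integral Ricci term to zero, the Cheeger--Colding rigidity machinery transfers, and one only needs the $L^{p}$ analogues of the segment inequality and Abresch--Gromoll estimate already available in \cite{petersen2001analysis,tian2016regularity}. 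The remaining ingredients, volume continuity and Gromov precompactness, follow immediately from the results already recorded in Section \ref{2}.
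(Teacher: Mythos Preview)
The paper does not actually give a proof of Theorem \ref{thm2.5}: it is quoted from \cite{petersen2001analysis,tian2016regularity} as a known almost rigidity result, with no argument supplied. So there is no ``paper's own proof'' to compare against.

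Your contradiction--rescaling--compactness outline is the standard route to such statements and is essentially correct. One small mis-citation: when you pass to the limit and want $\mathrm{vol}(B(x_{\infty},r))=\omega_{n}r^{n}$, the relevant input is volume continuity under Gromov--Hausdorff convergence, which in this paper is Theorem \ref{thm2.16}(a1), not Theorem \ref{thm2.4}. As stated here, Theorem \ref{thm2.4} has GH-closeness to a Euclidean ball as its \emph{hypothesis}, so invoking it at that point would be circular. Once you use the correct volume continuity statement together with \eqref{eq2.1} (which, after rescaling, has vanishing right-hand side), you do get Euclidean volume on every subscale.

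You are also honest that the real content lies in the last step, volume-cone $\Rightarrow$ metric-cone rigidity in the $L^{p}$ Ricci setting. That is precisely what \cite{petersen2001analysis,tian2016regularity} establish, and since Theorem \ref{thm2.5} is cited from those sources, deferring to them for this step is appropriate rather than a gap.
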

\begin{thm}[Almost metric cone,\cite{chen2022segment,tian2016regularity}]\label{thm2.6}
$\forall\epsilon>0$, there exists $\delta=\delta(n,p,v,\lambda,\epsilon)$ and $r_{0}=r_{0}(n,p,v,\lambda,\epsilon)$, such that if
\[\dfrac{vol(B(x,r))}{\omega_{n}r^{n}}\geqslant(1-\delta)\dfrac{vol(B(x,r/2))}{\omega_{n}(r/2)^{n}},\]
for some $r\leqslant r_{0}$, then there exists a compact length space X with $diam(X)\leqslant(1+\epsilon)\pi$ such that $d_{GH}(B(x,r),B(z^{*},r))\leqslant\epsilon r$, where $z^{*}$ is the cone vertex of $C(X)$.
\end{thm}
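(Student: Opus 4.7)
The plan is to mimic Cheeger--Colding's ``volume cone implies metric cone'' argument, replacing the pointwise Laplacian comparison and Bochner identity by their $L^p$-integrated analogues available under \eqref{eq1.1}--\eqref{eq1.2}. I would proceed by contradiction: suppose there exist sequences $(M_i^n,g_i,x_i)$ with $r_i\to 0$, $\delta_i\to 0$ satisfying the hypothesis but for which no metric cone is $\epsilon r_i$-close to $B(x_i,r_i)$. After rescaling by $r_i^{-2}$, the rescaled balls have unit radius, the rescaled $L^p$-norm $r_i^{2p-n}\int|Ric_-|^p$ tends to $0$ since $2p-n>0$, and by the doubling estimate of Corollary~\ref{cor2.3} Gromov precompactness yields a pGH limit $(Y,y_\infty)$ of $\bar B(x_i,1)$. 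The integral Bishop--Gromov inequality \eqref{eq2.1} then forces the limit density $r\mapsto \mathcal{H}^n(B(y_\infty,r))/(\omega_n r^n)$ to be monotone, and the hypothesis $\delta_i\to 0$ forces it to be constant on $[1/2,1]$.

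The heart of the proof is to promote this equality of volume ratios to an honest conical structure. For this I would apply the $L^p$ Laplacian comparison $\Delta d_x\leqslant\tfrac{n-1}{d_x}+(\mathrm{error})$, combined with an integral Bochner identity, to a harmonic replacement $b$ of $d_x^2/2$ on the annulus $A_i=B(x_i,1)\setminus B(x_i,1/2)$. Using Chen's segment inequality \cite{chen2022segment} together with the volume cone hypothesis, one obtains that $\mathrm{Hess}(b)$ is $L^2$-close to the metric tensor, which is the defining Hessian of the radial function on a cone. From this I would construct $X$ as a length-metric quotient of the sphere $S_{1/2}(x_i)$, identifying points on common near-minimizers through $x_i$, equipped with the induced rescaled distance. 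The segment inequality, applied again to pairs of points in $S_{1/2}(x_i)$, shows that $L^2$-most pairs are joined by minimizers whose length approximates the formal cone distance $\sin(\tfrac12 d_X(\cdot,\cdot))$; a standard $\epsilon$-net/doubling argument upgrades this to a global GH-approximation by $\bar B(z^*,1)\subset C(X)$, contradicting our assumption. The diameter bound $\mathrm{diam}(X)\leqslant(1+\epsilon)\pi$ and the compactness of $X$ then follow from the upper volume bound \eqref{eq2.3}.

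The main obstacle is the upgrading of each pointwise Cheeger--Colding tool (Laplacian comparison, Bochner identity, segment inequality, Abresch--Gromoll excess estimate) to an $L^p$-integrated version whose error vanishes with $r^{2p-n}\int|Ric_-|^p$ at small scale. This is technically delicate because one must simultaneously handle the singular contributions to the distributional Laplacian of $d_x$ along the cut locus and control the resulting exceptional sets uniformly via doubling. Once these $L^p$ analogues are in hand, as developed in \cite{petersen2001analysis,tian2016regularity,chen2022segment}, the remainder of the argument becomes a faithful transcription of the Cheeger--Colding proof of the volume cone implies metric cone theorem.
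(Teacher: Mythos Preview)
The paper does not supply its own proof of Theorem~\ref{thm2.6}: it is quoted in the preliminaries subsection~2.2 as a known almost-rigidity result, with the proof deferred entirely to the cited references \cite{chen2022segment,tian2016regularity}. So there is no ``paper's proof'' to compare your proposal against.

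That said, your outline is a faithful sketch of how those references actually establish the result. The strategy of arguing by contradiction, rescaling so that $r_i^{2p-n}\int|Ric_-|^p\to 0$, building a harmonic replacement $b$ of $d_x^2/2$, and using the integral Bochner formula to force $|\nabla^2 b - g|$ small in $L^2$ is exactly the Tian--Zhang adaptation of Cheeger--Colding's ``volume cone implies metric cone'' argument. Your identification of the main technical obstacle---upgrading each Cheeger--Colding tool to an $L^p$-integrated version with error controlled by $r^{2p-n}\int|Ric_-|^p$---is accurate, and the references cited in the paper (particularly \cite{tian2016regularity} for the noncollapsed case and \cite{chen2022segment} for the segment inequality) are precisely where those upgrades are carried out. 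One small correction: the diameter bound $\mathrm{diam}(X)\leqslant(1+\epsilon)\pi$ does not come from the upper volume bound \eqref{eq2.3} alone; it follows from the almost-maximality of the volume ratio together with the $L^p$ mean-curvature comparison for geodesic spheres, as in the original Cheeger--Colding argument.
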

\indent Finally, recall that an $\epsilon$-splitting map $b=(b_{1},...,b_{k}):B(x,r)\to\mathbb{R}^{k}$ is a harmonic map such that
\begin{equation}
	\begin{gathered}
		\fint_{B(x,r)}\sum_{i,j}|\langle\nabla b_{i},\nabla b_{j}\rangle-\delta_{ij}|^{2}+r^{2}\sum_{i}|\nabla^{2}b_{i}|^{2}\leqslant\epsilon^{2},\label{eq2.5}\\
		|\nabla b_{i}|\leqslant C(n),\ \forall i.
	\end{gathered}
\end{equation}
Then we have the following.
\begin{thm}[$\epsilon$-splitting,\cite{chen2022segment,petersen2001analysis}]\label{thm2.7}
	$\forall\epsilon>0$, there exists positive $\delta=\delta(n,p,v,\lambda,\epsilon)$ and $r_{0}=r_{0}(n,p,v,\lambda,\epsilon)$ such that
	\begin{enumerate}
		\item If $d_{GH}(B(x,\delta^{-1}r),B((0^{k},y),\delta^{-1}r))\leqslant\delta r$, for some $r\leqslant r_{0}$ and $(0^{k},y)\in\mathbb{R}^{k}\times Y$, then there exists an $\epsilon$-splitting map $b:B(x,r)\to\mathbb{R}^{k}$.
		\item If there exists an $\delta$-splitting map $b:B(x,4r)\to\mathbb{R}^{k}$, for some $r\leqslant r_{0}$, then $d_{GH}(B(x,r),B((0^{k},y),r))\leqslant\epsilon r$, for some $(0^{k},y)\in\mathbb{R}^{k}\times Y$.
	\end{enumerate}
\end{thm}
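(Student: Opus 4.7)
The plan is to follow the standard Cheeger-Colding harmonic approximation strategy for the almost splitting theorem \cite{cheeger1996lower}, carried out in the $L^p$ integral Ricci setting using the relative volume comparison of \cite{petersen1997relative} and the Bochner integration established in \cite{petersen2001analysis,chen2022segment}. Both directions have the typical flavour: in (1) one constructs harmonic functions that realise the GH factors, in (2) one uses harmonic functions with small Hessian to recover the GH factors.

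For direction (1), I would begin with the hypothesis $d_{GH}(B(x,\delta^{-1}r), B((0^k,y),\delta^{-1}r))\leqslant \delta r$ and use a GH-approximation to produce functions $\tilde b_i$ on $B(x,\delta^{-1}r/2)$ that are $O(\delta r)$-close to the projection onto the $i$-th $\mathbb{R}$ factor. I would then solve the Dirichlet problem on $B(x,2r)$ with boundary data $\tilde b_i$ to obtain harmonic replacements $b_i$. The gradient bound $|\nabla b_i|\leqslant C(n)$ follows from the $L^p$ Cheng-Yau-type gradient estimate valid under \eqref{eq1.1}-\eqref{eq1.2} with $p>n/2$. For the Hessian bound I would integrate the Bochner identity $\tfrac{1}{2}\Delta|\nabla b_i|^2=|\nabla^2 b_i|^2+\mathrm{Ric}(\nabla b_i,\nabla b_i)$ against a cutoff $\phi^2$ equal to $1$ on $B(x,r)$ with $|\Delta\phi|\leqslant C/r^2$ (such cutoffs are available in this setting). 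The negative Ricci term is absorbed by Hölder's inequality together with the normalized bound $r^{2p}\fint_{B(x,2r)}|\mathrm{Ric}_-|^p\leqslant C$, which is small for small $r$ by \eqref{eq1.1}-\eqref{eq1.2}. The orthogonality estimate $|\langle\nabla b_i,\nabla b_j\rangle-\delta_{ij}|$ then drops out by comparing $b_i$ to $\tilde b_i$ in $L^2$ and polarising.

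For direction (2) the cleanest path is a contradiction/compactness argument. Suppose the conclusion fails; take a sequence $(M^n_i,g_i,x_i)$ with $\delta_i$-splitting maps $b^i:B(x_i,4r)\to\mathbb{R}^k$ and $\delta_i\to 0$, while no $B(x_i,r)$ is $\epsilon r$-close to any $\mathbb{R}^k\times Y$. Using the uniform doubling of Corollary \ref{cor2.3}, extract a pointed GH limit $(X,d,x)$. Standard elliptic estimates, uniform in $i$, ensure that $b^i$ subconverges to a harmonic map $b^\infty:B(x,4r)\to\mathbb{R}^k$ with mutually orthonormal, parallel gradients on the limit. The integral version of Cheeger-Colding's splitting theorem in \cite{petersen2001analysis,chen2022segment} then forces $X$ to split isometrically as $\mathbb{R}^k\times Y$ near $x$, contradicting the assumed failure.

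The main obstacle is justifying the Bochner integration without a pointwise Ricci lower bound; this is exactly where $p>n/2$ enters, as it lets one absorb the $\mathrm{Ric}_-$ contribution into the $|\nabla^2 b|^2$ term through Hölder and the smallness of the scale-invariant norm of $\mathrm{Ric}_-$. The non-collapsing assumption \eqref{eq1.2} is essential twice over: to pass between $L^p$ and the normalized $\fint$-norms used in Theorem \ref{thm2.1}, and to construct the good cutoff functions with $L^\infty$ bound on $\Delta\phi$.
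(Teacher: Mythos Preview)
The paper does not supply its own proof of Theorem~\ref{thm2.7}; it is quoted as a known result from \cite{petersen2001analysis,chen2022segment} in the Preliminaries section, so there is no in-paper argument to compare against. Your sketch is the standard Cheeger--Colding harmonic replacement/Bochner argument transported to the integral Ricci setting, which is precisely the route taken in the cited references, and the ingredients you invoke (Petersen--Wei volume comparison, $L^p$ gradient estimate, good cutoff with $|\Delta\phi|\leqslant C/r^2$ from \cite{dai2018local}, and the segment/Bochner integration of \cite{petersen2001analysis,chen2022segment}) are the correct ones.

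One small caution on direction (2): the contradiction/compactness argument you outline requires knowing that the limit space supports a splitting theorem, i.e.\ that a limit harmonic function with parallel gradient forces an isometric $\mathbb{R}$-factor. In the integral Ricci setting this is not automatic on the limit space itself; the cited papers instead run the Pythagorean/excess estimate directly on the manifolds $M_i^n$ (using the $\delta$-splitting map to build an almost-product structure at the level of distances) and then pass to the limit. Your version can be made to work, but you should be explicit that the splitting is obtained on the approximating manifolds before taking the limit, rather than appealing to a structure theorem on $(X,d)$ that has not been independently established.
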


\subsection{Harmonic radius and diffeomorphisms}
We first recall the notion of the harmonic radius.
\begin{defn}\label{defn2.8}
For $x\in (M^{n},g)$ and $\alpha\in(0,1)$, we define the $ C^{\alpha} $ harmonic radius $r_{h}^{\alpha}(x)$ to be the largest $r>0$ such that there exists a coordinate map $\mathcal{X}=(x^{1},...,x^{n}):B(x,r)\to\mathbb{R}^{n}$ satisfying
\begin{enumerate}
	\item $\Delta_{g}x^{i}=0$.
	\item $g_{ij}=g(\nabla x_{i},\nabla x_{j})$ on $B(x,r)$ satisfies
     \begin{align}
     	\left\|g_{ij}-\delta_{ij}\right\|_{C^{0}}+r^{\alpha}[g_{ij}]_{C^{\alpha}}\leqslant10^{-6}.\label{eq2.6}
     \end{align}
\end{enumerate}
\end{defn}
\begin{rem}\label{rem2.9}
(1)\ We call the above coordinate map $\mathcal{X}:B(x,r)\to\mathbb{R}^{n}$ a $C^{\alpha}$ harmonic coordinate system.\ Analogously, one can define the $W^{2,p}$ harmonic coordinate system and harmonic radius by requiring a scaling invariant $W^{2,p}$ bound 
		\begin{align}
			\left\|g_{ij}-\delta_{ij}\right\|_{C^{0}}+\sum_{k=1}^{2}r^{k-\frac{n}{p}}\left\|\partial^{k}g_{ij}\right\|_{L^{p}}\leqslant10^{-6},\label{eq2.7}
		\end{align}
instead of \eqref{eq2.6}.\ We will denote $r_{h}^{2,p}(x)$ to be the $W^{2,p}$ harmonic radius at $x$.\ When $p>\frac{n}{2}$ and $0<\alpha\leqslant2-\frac{n}{p}$, $ W^{2,p}$ harmonic coordinates are also $C^{\alpha}$ harmonic coordinates.\\
\indent (2)\ One may replace the bound $10^{-6}$ in \eqref{eq2.6} and \eqref{eq2.7} by any constant $Q>0$ and denote the $C^{\alpha}$ and $W^{2,p}$ harmonic radius by $r_{h}^{\alpha}(x,Q)$ and $r_{h}^{2,p}(x,Q)$ respectively.\ For manifolds satisfying \eqref{eq1.2} and \eqref{eq1.4}, we have $r_{h}^{\alpha}(x,CQ)\geqslant r_{h}^{2,p}(x,Q)$ for some constant $C=C(n,p,v,\lambda,\alpha)$ when $p>\frac{n}{2}$ and $0<\alpha\leqslant2-\frac{n}{p}$.\ However, we will omit the constants $C,Q$ and just write $r_{h}^{\alpha}(x)\geqslant r_{h}^{2,p}(x)$, since this will not affect our argument.\\
\indent (3)\ Harmonic coordinates have many good properties when it comes to regularity issues.\ We refer to \cite{anderson1993degeneration,petersen2006riemannian,petersen1997convergence} for a nice introduction.\ In particular, the $L^{p}$ bound of the Ricci curvature ($p>\frac{n}{2}$) gives \textit{a priori} $C^{\alpha}\cap W^{2,p}$ bounds on the metric tensor $g_{ij}$ in harmonic coordinates for any $\alpha\in(0,2-\frac{n}{p}]$.
\end{rem}
We now review the following $\epsilon$-regularity theorem, which can be proved by modifying arguments in \cite{anderson1990convergence}.\ See also \cite{tian2016regularity,petersen1997convergence}.
\begin{thm}\label{thm2.10}
	Given $p>\frac{n}{2}$, there exists $\epsilon=\epsilon(n,p,v,\lambda)>0$ and $r_{0}=r_{0}(n,p,v,\lambda)>0$ such that if $M^{n}$ satisfies \eqref{eq1.2}, \eqref{eq1.4} and $x\in M^{n}$ satisfies either of the following:
	\begin{enumerate}
		\item $vol(B(x,r))\geqslant(1-\epsilon)\omega_{n}r^{n},\ \text{for some}\ r\leqslant r_{0}$,
		\item $d_{GH}(B(x,r),B(0^{n},r))\leqslant\epsilon r,\ \text{for some}\ r\leqslant r_{0}$,
	\end{enumerate}
then 
\[r_{h}^{2,p}(x)\geqslant\frac{1}{2}r.\]
\end{thm}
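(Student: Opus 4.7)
The proof is a contradiction-and-compactness argument in the spirit of Anderson's original $\epsilon$-regularity theorem \cite{anderson1990convergence}, adapted to the $L^p$ Ricci setting. Theorem \ref{thm2.5} converts hypothesis (1) into hypothesis (2) after shrinking $\epsilon$ and $r_0$, so it suffices to prove the conclusion under hypothesis (2). Suppose toward a contradiction that no such $\epsilon, r_0$ exist. Then there is a sequence of manifolds $(M_i^n, g_i)$ satisfying \eqref{eq1.2} and \eqref{eq1.4}, points $x_i \in M_i$, radii $r_i \to 0$, and $\epsilon_i \to 0$ with
\[
 d_{GH}\bigl(B(x_i, r_i), B(0^n, r_i)\bigr) \leqslant \epsilon_i r_i, \qquad \rho_i := r_h^{2,p}(x_i) < r_i/2.
\]

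Rescale by $\tilde g_i := \rho_i^{-2} g_i$. Since the harmonic radius is homogeneous of degree one in the metric, $r_h^{2,p}(x_i, \tilde g_i) = 1$, and $\tilde r_i := r_i/\rho_i > 2$. The noncollapsing condition \eqref{eq1.2} survives on $(\tilde M_i, \tilde g_i)$ at scales up to $1/\rho_i \to \infty$, and since $p > \tfrac{n}{2}$ the rescaled integral Ricci energy shrinks: for each fixed $R > 0$,
\[
 \int_{\tilde B(x_i, R)} |\widetilde{Ric}|_{\tilde g_i}^p \, dvol_{\tilde g_i} \;=\; \rho_i^{\,2p-n} \int_{B(x_i, R\rho_i)} |Ric|^p \, dvol \;\leqslant\; \rho_i^{\,2p-n}\lambda \;\longrightarrow\; 0.
\]
Combining this with hypothesis (2) on a ball of radius $\tilde r_i \to \infty$ and the relative volume comparison of Theorem \ref{thm2.1}, we extract pointed Gromov--Hausdorff convergence $(\tilde M_i, x_i) \to (\mathbb{R}^n, 0)$ along a subsequence.

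The decisive step, which is also where I expect the main technical obstacle, is upgrading this Gromov--Hausdorff convergence to $W^{2,p}$ convergence of the metric tensors on $\tilde B(x_i, 1)$ in the harmonic chart provided by the definition of $\rho_i$. In such charts the metric satisfies the familiar harmonic-coordinate identity
\[
 \Delta \tilde g_{ij} = -2\,\widetilde{Ric}_{ij} + Q(\tilde g, \partial \tilde g),
\]
so the a priori $C^\alpha \cap W^{2,p}$ bounds noted in Remark \ref{rem2.9}, together with Calder\'on--Zygmund estimates, yield uniform $W^{2,p}$ bounds on $\tilde g_i$. Weak $W^{2,p}$ and strong $C^\alpha$ compactness identify the limit metric with $\delta_{ij}$, and the vanishing of the right-hand side above in the limit promotes this to strong $W^{2,p}$ convergence on concentric subballs. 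Consequently, for large $i$ the quantity $\|\tilde g_i - \delta\|_{C^0} + \sum_{k=1}^{2} r^{k - n/p}\|\partial^k \tilde g_i\|_{L^p}$ drops strictly below $10^{-6}$ on some ball of radius $>1$, contradicting $r_h^{2,p}(x_i, \tilde g_i) = 1$. The most delicate point, parallel to the analysis in \cite{petersen1997convergence, tian2016regularity}, is to arrange that the harmonic charts themselves converge to genuine Euclidean coordinates on $B(0,1) \subset \mathbb{R}^n$ rather than degenerating, so that the harmonic-radius inequality passes cleanly to the limit.
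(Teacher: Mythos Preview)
The paper does not prove Theorem~\ref{thm2.10}; it is quoted as a known result with references to \cite{anderson1990convergence,tian2016regularity,petersen1997convergence}. Your contradiction--blow-up scheme is precisely the Anderson-style argument those references carry out, so the overall strategy is the right one.

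There is, however, a genuine gap at the step where you assert that the scale-invariant $W^{2,p}$ quantity drops below $10^{-6}$ ``on some ball of radius $>1$''. Rescaling so that $r_h^{2,p}(x_i,\tilde g_i)=1$ furnishes a harmonic chart only on $\tilde B(x_i,1)$. Even if you prove strong $W^{2,p}$ convergence $\tilde g_i\to\delta$ on this unit ball, that does not contradict $r_h^{2,p}(x_i,\tilde g_i)=1$: it merely says the existing chart on the unit ball satisfies a sharper bound than $10^{-6}$. To manufacture a chart on a ball of radius strictly larger than $1$ you need a lower bound on the harmonic radius at points \emph{other than} $x_i$, and nothing in your setup provides that. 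The standard remedy, and the step that Anderson's proof actually hinges on, is a point-selection argument carried out \emph{before} rescaling: pick $y_i\in B(x_i,r_i)$ with $r_h(y_i)\leqslant r_h(x_i)$ and such that $r_h(z)\geqslant\tfrac{1}{2}r_h(y_i)$ for all $z\in B(y_i, Nr_h(y_i))$, for any prescribed $N$ (obtained by iterated doubling). After rescaling at $y_i$ so that $r_h(y_i)=1$, every point of $\tilde B(y_i,N)$ carries a harmonic chart of radius $\geqslant\tfrac{1}{2}$; this atlas is what permits $C^{\alpha}\cap W^{2,p}$ subconvergence on balls of radius $>1$, and then the limit being Euclidean forces $r_h^{2,p}(y_i,\tilde g_i)>1$, the desired contradiction. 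Your closing remark about the charts ``degenerating'' is not the real obstruction---the $W^{2,p}$ bounds already prevent that; the missing ingredient is this point-picking step. A secondary point: you claim $\tilde r_i\to\infty$, but from $\rho_i<r_i/2$ you only get $\tilde r_i>2$; this is harmless once point-picking is in place, but worth noting.
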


As noted in Remark \ref{rem2.9} (2), we can get the same lower bound for $C^{\alpha}$ harmonic radius at $x$.\ Then we recall two theorems that will be used to control diffeomorphism types.\ See Theorem 8.1 and Theorem 8.4 of \cite{cheeger2015regularity}.
\begin{thm}\label{thm2.11}
	For any $\epsilon>0$, $0<\beta<\alpha<1$, there exists $\delta=\delta(n,\epsilon,\alpha,\beta)$ such that the following holds.\ If $M^{n}_{1}$ and $M^{n}_{2}$ are Riemannian manifolds and $U_{j}\subset M_{j}$ are subsets $r_{h}^{\alpha}(x)>r>0$ for all $x\in U_{j}$ and 
	\[d_{GH}(B(U_{1},r),B(U_{2},r))<\delta r,\]
	then there exists open sets $B(U_{j},\frac{r}{2})\subseteq U_{j}^{\prime}\subseteq B(U_{j},r)$ and a $C^{1,\beta}$-diffeomorphism $\phi:U_{1}^{\prime}\to U_{2}^{\prime}$ such that
	\[\left\|g_{1}-\phi^{*}g_{2}\right\|_{C^{0}}+r^{\beta}[g_{1}-\phi^{*}g_{2}]_{C^{\beta}}\leqslant\epsilon.\]
\end{thm}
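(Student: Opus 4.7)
The plan is to prove Theorem \ref{thm2.11} by combining a contradiction--and--compactness perspective with an explicit construction through harmonic coordinates. After rescaling so that $r=1$, the hypothesis $r_h^\alpha>1$ on $U_j$ provides a covering of each $U_j$ by harmonic charts in which the metrics enjoy $C^\alpha$-control; the Gromov-Hausdorff hypothesis then lets me match charts between $U_1$ and $U_2$ by near-identity transitions, and a center-of-mass patching together with a standard interpolation inequality yields the desired $C^{1,\beta}$-diffeomorphism with small $C^\beta$ error.

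\textbf{Construction.} First I would fix a maximal $\tfrac{1}{20}$-separated net $\{x_i\}\subset U_1$ and take corresponding points $y_i\in U_2$ within $2\delta$ of $\psi(x_i)$, where $\psi:B(U_1,1)\to B(U_2,1)$ is a $\delta$-Gromov-Hausdorff approximation. The harmonic radius hypothesis provides charts $F_i^1:B(x_i,1)\to\Omega_i^1\subset\mathbb{R}^n$ and $F_i^2:B(y_i,1)\to\Omega_i^2\subset\mathbb{R}^n$ in which the metrics satisfy the bound \eqref{eq2.6}. The composition $F_i^2\circ\psi\circ(F_i^1)^{-1}$ maps one nearly-Euclidean ball into another and, after aligning frames by an orthogonal transformation $O_i\in O(n)$, is $C(n)\delta$-close to the identity in $C^0$. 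Mollifying at a scale $\rho$ with $\delta\ll\rho\ll 1$ gives a smooth local map $\phi_i:B(x_i,3/4)\to M_2$ which is a diffeomorphism onto its image. I then patch the $\phi_i$ via a partition of unity subordinate to $\{B(x_i,1/4)\}$ through an exponential center-of-mass construction inside the target harmonic charts; the uniform overlap number of the net is controlled by volume comparison under the harmonic radius assumption.

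\textbf{The $C^\beta$ estimate.} In the fixed harmonic coordinates both $g_1$ and $\phi^*g_2$ enjoy $C^\alpha$-bounds, and since $\phi$ is $C(n)\delta$-close to a local isometry, $\|g_1-\phi^*g_2\|_{C^0}$ is small while $[g_1-\phi^*g_2]_{C^\alpha}$ is uniformly bounded in $\delta$. The interpolation inequality
\begin{equation*}
[g_1-\phi^*g_2]_{C^\beta}\leqslant C\,\|g_1-\phi^*g_2\|_{C^0}^{1-\beta/\alpha}\,[g_1-\phi^*g_2]_{C^\alpha}^{\beta/\alpha},
\end{equation*}
valid because $\beta<\alpha$, then forces the full $C^\beta$-norm below any prescribed $\epsilon$ once $\delta$ is chosen small enough.

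\textbf{Main obstacle.} The delicate step is ensuring that the globally patched map $\phi$ is a $C^{1,\beta}$-diffeomorphism and that the local near-identity behavior survives averaging. Performing the center-of-mass inside the near-Euclidean harmonic coordinates reduces it, up to controllable nonlinear error, to an ordinary convex combination; the result inherits the $C^{1,\beta}$-bounds of its inputs, and its Jacobian remains non-degenerate because each $\phi_i$ is $O(\delta)$-close to an isometry. This is essentially the ``fundamental theorem of convergence theory'' originating with Anderson, Cheeger and Jost-Karcher, and the paper closely follows the presentation of \cite{cheeger2015regularity}.
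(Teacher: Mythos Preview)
The paper does not supply its own proof of Theorem~\ref{thm2.11}; it is quoted as a known result, with the reference ``See Theorem~8.1 and Theorem~8.4 of \cite{cheeger2015regularity}.'' Your outline is a faithful sketch of the standard proof in that reference (harmonic-chart covers, matching via the GH approximation, center-of-mass gluing, and the $C^0$--$C^\alpha$ interpolation to reach $C^\beta$), so there is nothing to compare: you have essentially reconstructed the argument the paper is citing.
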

\begin{thm}\label{thm2.12}
	Let $(M^{n},g)$ be a manifold satisfying \eqref{eq1.6} and \eqref{eq2.4}.\ $U\subseteq M$ is an open subset such that for some $\alpha\in(0,1)$, $r_{h}^{\alpha}(x)>rdiam(U)>0$ for all $x\in U$, where $diam(U)$ is the extrinsic diameter so that $diam(U)\leqslant D$.\ Then there exists $C=C(n,p,v,\lambda,D,r,\alpha)$ such that $U$ has at most $C$ diffeomorphism types.
\end{thm}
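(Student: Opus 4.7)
The plan is to combine Gromov--Hausdorff precompactness with the $C^{1,\beta}$-diffeomorphism statement of Theorem \ref{thm2.11}. By the scale invariance of the ratio $r_{h}^{\alpha}(x)/diam(U)$, I first normalize $diam(U)=1$; under this rescaling the hypothesis becomes $r_{h}^{\alpha}(x)\geq r$ uniformly on $U$, and the volume doubling property \eqref{eq2.4} survives with constants depending only on $n,p,v,D,\lambda,r$ at the relevant scales.

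Suppose for contradiction there were an infinite sequence of pairwise non-diffeomorphic admissible pairs $(M_{i}^{n},U_{i})$ normalized in this way. The $r$-neighborhoods $B(U_{i},r)\subseteq M_{i}$ have uniformly bounded diameter (at most $1+2r$), uniform noncollapsing on scale $r$ (from the harmonic radius lower bound, which forces $vol(B(x,r/2))\geq c(n)r^{n}$), and uniform volume doubling. Gromov's compactness theorem then yields, after passing to a subsequence,
\[
  B(U_{i},r)\xrightarrow{GH} Y, \qquad U_{i}\xrightarrow{GH} U_{\infty}\subseteq Y.
\]
In particular $d_{GH}(B(U_{i},r),B(U_{j},r))\to 0$ as $i,j\to\infty$.

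Next, I would fix $\beta\in(0,\alpha)$ and a small $\epsilon>0$, and invoke Theorem \ref{thm2.11} with $\delta=\delta(n,\epsilon,\alpha,\beta)$. For $i,j$ large enough the hypothesis $d_{GH}(B(U_{i},r),B(U_{j},r))<\delta r$ holds, so Theorem \ref{thm2.11} provides open sets $U_{i}',U_{j}'$ with $B(U_{i},r/2)\subseteq U_{i}'\subseteq B(U_{i},r)$ and similarly for $j$, together with a $C^{1,\beta}$-diffeomorphism $\phi:U_{i}'\to U_{j}'$ satisfying $\|g_{i}-\phi^{*}g_{j}\|_{C^{0}}+r^{\beta}[g_{i}-\phi^{*}g_{j}]_{C^{\beta}}\leq\epsilon$. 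Since $U_{i}\subseteq U_{i}'$ and $U_{j}\subseteq U_{j}'$, and $\phi$ is geometrically close to the GH correspondence between $U_{i}$ and $U_{j}$, a standard ambient isotopy argument in $U_{j}'$ produces a diffeomorphism $U_{i}\cong U_{j}$, contradicting the non-diffeomorphism assumption.

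The main obstacle is this last step, namely promoting the diffeomorphism $\phi:U_{i}'\to U_{j}'$ of enlarged neighborhoods to an honest diffeomorphism of the original open sets $U_{i}$ and $U_{j}$. The $C^{1,\beta}$-control in Theorem \ref{thm2.11} ensures that $\phi$ is close, in $C^{1,\beta}$, to the underlying GH identification, so $\phi(U_{i})$ and $U_{j}$ differ only by a small smooth perturbation inside $U_{j}'$; one can then construct an ambient isotopy of $U_{j}'$ carrying $\phi(U_{i})$ onto $U_{j}$, for example using a collar of the topological boundary of $U_{j}$ in $U_{j}'$. Once this technical bridge is in place, Gromov compactness combined with Theorem \ref{thm2.11} forces the sequence $\{U_{i}\}$ to stabilize in diffeomorphism type, yielding the required finite bound $C=C(n,p,v,\lambda,D,r,\alpha)$.
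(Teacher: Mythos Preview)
Your route via Gromov compactness and Theorem \ref{thm2.11} is different from the paper's. The paper (see Remark \ref{rem2.13}) does not pass to a limit at all: it covers $U$ by a controlled number of harmonic coordinate balls $B(x_{i},\tfrac{1}{2}r\,diam(U))$ using volume doubling, observes that the $C^{\alpha}$ bound on $g_{ij}$ forces the transition maps to be $C^{1,\alpha}$-controlled, and then invokes the classical Cheeger / Anderson--Cheeger argument that a bounded atlas with controlled overlaps yields only finitely many diffeomorphism types. This is a direct, effective construction that never compares two different $U$'s.

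Your argument, by contrast, has a real gap exactly where you flag it. Theorem \ref{thm2.11} only produces a diffeomorphism $\phi:U_{i}'\to U_{j}'$ between the \emph{thickenings}; to conclude $U_{i}\cong U_{j}$ you need $\phi(U_{i})$ to be ambiently isotopic to $U_{j}$ inside $U_{j}'$. But $U_{j}$ is an \emph{arbitrary} open set: its topological boundary need not be a hypersurface, need not be collared, and can be as bad as a Cantor set times $\mathbb{R}^{n-1}$ or worse. The phrase ``a collar of the topological boundary of $U_{j}$ in $U_{j}'$'' therefore has no content in this generality, and no amount of $C^{1,\beta}$-closeness of $\phi$ to the Gromov--Hausdorff correspondence will manufacture one. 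Concretely, one can have $U_{j}'=U_{i}'$ with $\phi$ arbitrarily $C^{1}$-close to the identity and still have $\phi(U_{i})$ and $U_{j}$ non-diffeomorphic open subsets, once their boundaries are irregular. So the final step is not a technicality; it is the entire difficulty, and it is precisely what the chart-counting argument sidesteps by encoding the diffeomorphism type of $U$ combinatorially through a bounded atlas rather than through a single global comparison map.

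If you want to rescue your approach, you would have to add a hypothesis on $\partial U$ (e.g., that it is a smooth hypersurface with controlled second fundamental form), which is in fact satisfied in the actual applications in Section \ref{4} where the body regions are balls minus finitely many smaller balls. For the theorem as stated, however, the chart-covering proof is the correct one.
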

\begin{rem}\label{rem2.13}
	Note that by the volume doubling property \eqref{eq2.4}, $U$ may be covered by a bounded number of harmonic coordinate charts $B(x_{i},\frac{1}{2}rdiam(U))$.\ The $C^{\alpha}$ bounds on the metric in harmonic coordinate charts will yield $C^{1,\alpha}$ control over the transition maps between these charts.\ Then one can control the number of diffeomorphism types.\ See \cite{cheeger1967comparison,anderson1991diffeomorphism} for more details.
\end{rem}
\indent If we assume $L^{p}$ bound for Riemannian curvature $Rm$, then we can get the following (see Theorem 5.4 in \cite{petersen1997convergence}).
\begin{thm}\label{thm2.14}
	Given $p>\frac{n}{2},v>0$ and $\Lambda>0$, there exists $r_{0}=r_{0}(n,p,v,\Lambda)$ such that any n-manifold $(M^{n},g)$ with \eqref{eq1.2} and
	\begin{align}
		\int_{M^{n}}|Rm|^{p}\leqslant\Lambda,\label{eq2.8}
	\end{align}
satisfies $r_{h}^{2,p}(x)\geqslant r_{0}$, for all $x\in M^{n}$.\ In particular, the collection of closed n-manifolds satisfying \eqref{eq1.2}, \eqref{eq1.6} and \eqref{eq2.8} contains at most $C=C(n,p,v,D,\Lambda)$ diffeomorphism types.
\end{thm}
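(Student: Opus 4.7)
The plan is to establish the uniform harmonic radius lower bound by a blow-up/contradiction argument and then appeal to Theorem \ref{thm2.12} for the diffeomorphism count. The contradiction argument rests on three ingredients already available: the scale-invariance of the noncollapsing \eqref{eq1.2}, the super-scale-invariance of the $L^p$-curvature integral (which requires precisely $2p > n$), and elliptic regularity in harmonic coordinates.

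\textbf{Step 1: Rescaling.} Suppose the harmonic radius bound fails, so for some fixed $\alpha\in(0,2-n/p)$ there exist manifolds $(M_i^n,g_i)$ satisfying \eqref{eq1.2} and \eqref{eq2.8}, and points $x_i\in M_i$ with $r_i:=r_h^\alpha(x_i)\to 0$. Consider the rescaled metrics $\tilde g_i:=r_i^{-2}g_i$. Harmonic radius is $1$-homogeneous in the metric scale, so $r_h^\alpha(\tilde g_i,x_i)=1$, and \eqref{eq1.2} persists on balls of $\tilde g_i$-radius up to $r_i^{-1}\to\infty$. Since $2p-n>0$,
\[
\int_{M_i}|\widetilde{Rm}|_{\tilde g_i}^{p}\,dv_{\tilde g_i}=r_i^{2p-n}\int_{M_i}|Rm|_{g_i}^{p}\,dv_{g_i}\leqslant\Lambda r_i^{2p-n}\longrightarrow 0.
\]
In particular $\|\widetilde{Ric}_{-,i}\|_{L^p}\to 0$, and by Theorem \ref{thm2.1} the volume ratios of $(\tilde g_i,x_i)$ become monotone to leading order.

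\textbf{Step 2: Extracting a flat limit.} On $B_{\tilde g_i}(x_i,1)$ we have harmonic coordinates in which $(\tilde g_i)_{kl}$ is uniformly bounded in $C^\alpha$. Inserting this into the harmonic-coordinate identity
\[
\Delta_{\tilde g_i}(\tilde g_i)_{kl}=-2(\widetilde{Ric}_i)_{kl}+Q(\tilde g_i,\partial\tilde g_i),
\]
where $Q$ is quadratic in $\partial\tilde g_i$, and combining with the vanishing $L^p$ Ricci bound from Step 1, elliptic regularity upgrades the control to a uniform $W^{2,p}$ bound on $(\tilde g_i)_{kl}$ with $\|\partial^{2}(\tilde g_i)_{kl}\|_{L^p}\to 0$. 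Passing to a subsequence and identifying the charts with $B(0,1)\subset\mathbb R^n$, the metrics converge to a limit $g_\infty$. Since the curvature tensors go to zero in $L^p$, $g_\infty$ is flat; combined with the harmonic-coordinate normalization and noncollapsing, $g_\infty=\delta_{kl}$ on $B(0,1)$.

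\textbf{Step 3: Contradiction and diffeomorphism bound.} The harmonic radius is lower semicontinuous under this mode of convergence, since the Euclidean coordinates of $g_\infty$ can be perturbed to nearby harmonic coordinates for $\tilde g_i$ by solving a linearized Dirichlet problem, yielding $C^\alpha$ harmonic coordinates for $\tilde g_i$ on $B(0,1-\varepsilon)$ in which $(\tilde g_i)_{kl}$ is arbitrarily close to $\delta_{kl}$. Hence
\[
\liminf_{i\to\infty}r_h^\alpha(\tilde g_i,x_i)\geqslant r_h^\alpha(g_\infty,0)=+\infty,
\]
contradicting $r_h^\alpha(\tilde g_i,x_i)=1$. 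This proves $r_h^\alpha(x)\geqslant r_0(n,p,v,\Lambda)$ uniformly. Since \eqref{eq2.8} implies an $L^p$-Ricci bound, Corollary \ref{cor2.3} gives the volume doubling \eqref{eq2.4}; applying Theorem \ref{thm2.12} with $U=M^n$ and $r=r_0/D$ yields the diffeomorphism count $C=C(n,p,v,D,\Lambda)$.

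\textbf{Main obstacle.} The delicate point is executing the elliptic upgrade from $C^\alpha$ to $W^{2,p}$ harmonic-coordinate control together with the lower semicontinuity of the harmonic radius in a regime where only integral curvature bounds are available; this is where the $L^p$-Ricci smallness from Step 1 and Petersen-Wei's comparison \eqref{eq2.1} enter in an essential way.
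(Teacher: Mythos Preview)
The paper does not give its own proof of Theorem~\ref{thm2.14}; it is quoted from \cite{petersen1997convergence}. Your blow-up strategy is the classical one, but Step~3 contains a genuine gap that prevents the contradiction from closing.

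You rescale so that $r_h^\alpha(\tilde g_i,x_i)=1$ and then work exclusively inside the single harmonic chart of radius $1$ centered at $x_i$; the limit $g_\infty$ you extract is therefore defined only on $B(0,1)$, and the harmonic radius of the flat metric on that ball at the origin is $1$, not $+\infty$. Your own perturbation step makes this explicit: solving the Dirichlet problem produces harmonic coordinates for $\tilde g_i$ on $B(0,1-\varepsilon)$, which yields only $\liminf_i r_h^\alpha(\tilde g_i,x_i)\geqslant 1$, perfectly consistent with the normalization $r_h^\alpha(\tilde g_i,x_i)=1$. To legitimately write $r_h^\alpha(g_\infty,0)=+\infty$ you need $g_\infty$ to be a complete flat manifold, and for that you need convergence on balls of every radius, not just radius $1$.

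The standard repair is a point-picking step before rescaling: choose $x_i$ to (almost) minimize $r_h^\alpha$ on $M_i$; this is trivial on a closed manifold and otherwise follows from a standard doubling argument. After rescaling one then has $r_h^\alpha(y)\geqslant 1$ for every $y$ in balls of arbitrarily large radius about $x_i$, so the harmonic charts at \emph{all} nearby points patch together and give $C^{1,\alpha}$ Cheeger--Gromov convergence to a complete limit. Your Step~1 curvature decay then forces that limit to be flat, and the all-scale noncollapsing from \eqref{eq1.2} forces it to be $\mathbb{R}^n$ itself; now lower semicontinuity genuinely gives $\liminf_i r_h^\alpha(\tilde g_i,x_i)=+\infty$, the desired contradiction. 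With this fix the remainder of your argument (the appeal to Corollary~\ref{cor2.3} and Theorem~\ref{thm2.12} for the diffeomorphism count) is correct.
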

\begin{rem}\label{rem2.15}
As noted before, if $(M^{n}_{i},g_{i})\xrightarrow{GH}(X,d)$ with uniform lower bound on $W^{2,p}$ harmonic radius, then $X$ is a $C^{\alpha}$-Riemannian manifold and $(M^{n}_{i},g_{i})\to(X,d)$ in the $C^{\alpha}$ topology for any $\alpha\in(0,2-\frac{n}{p})$ (see \cite{petersen1997convergence}).\ Thus, it is reasonable to consider $1\leqslant q\leqslant\frac{n}{2}$ in Theorem \ref{thm1.1}.\\
\indent In addition, when $n=2$ or $3$, the Riemannian curvature is completely determined by Ricci curvature and $L^{p}$ Ricci curvature bound \eqref{eq1.4} implies $L^{p}$ bound of Riemannian curvature.\ Thus, we only need to consider the case $n\geqslant4$ when proving Theorem \ref{thm1.2}.
\end{rem}

\subsection{Structure of the limit space}
Let $(M_{i}^{n},g_{i})$ be a sequence of Riemannian $n$-manifolds satisfying \eqref{eq1.1} and \eqref{eq1.2} for some uniform $v>0,\lambda>0$ and $p>\frac{n}{2}$.\ Then by Gromov's precompactness theorem, the volume doubling property \eqref{eq2.4} implies that up to a subsequence,
\begin{align}
	(M_{i}^{n},g_{i},x_{i})\xrightarrow{pGH}(X,d,x),
\end{align}
for some complete length space $(X,d)$.\ Following lines of Cheeger-Colding \cite{cheeger1996lower,cheeger1997structure,cheeger2000structure}, one can easily get similar structure results on the limit space.\ Let us define the regular set $\mathcal{R}$ and singular set $\mathcal{S}$ on $X$ by
\begin{gather*}
	\mathcal{R}=\{x\in X:\exists\text{ tangent cone at \textit{x} is isometric to } \mathbb{R}^{n}\},\\
	\mathcal{S}=X\setminus\mathcal{R}.
\end{gather*}
Moreover, a stratification on the singular set was defined.\ Namely,
\begin{align}
	\emptyset\subset\mathcal{S}^{0}\subset...\subset\mathcal{S}^{n-1}=\mathcal{S}\subset X,
\end{align}
where
\begin{align}
	\mathcal{S}^{k}=\{x\in X:\text{ no tangent cone at \textit{x} splits off } \mathbb{R}^{k+1}\}.
\end{align}
\indent We now recall the following structure results on the limit space, which can be proved by similar arguments as that of Cheeger-Colding \cite{cheeger1996lower,cheeger1997structure,cheeger2000structure} (see also \cite{tian2016regularity} and references therein).
\begin{thm}\label{thm2.16}
	Let $(M_{i}^{n},g_{i},x_{i})\xrightarrow{pGH}(X,d,x_{\infty})$ be a sequence of manifolds satisfying \eqref{eq1.2}.Then the following hold:
	\begin{itemize}
		\item If $(M_{i}^{n},g_{i})$ satisfies \eqref{eq1.1}, then
		\begin{enumerate}
			\item[(a1)] For any $r>0$, we have: $vol(B(x_{i},r))\to\mathcal{H}^{n}(B(x_{\infty},r))$.
			\item[(a2)] For any $x\in X$, each tangent cone at $x$ is a metric cone and splits off lines isometrically.
			\item[(a3)] $\mathcal{S}=\mathcal{S}^{n-2}$, i.e., $\mathcal{S}^{n-1}\setminus\mathcal{S}^{n-2}=\emptyset$.
			\item[(a4)] $dim_{\mathcal{H}}\mathcal{S}^{k}\leqslant k$.\ In particular, $dim_{\mathcal{H}}\mathcal{S}\leqslant n-2$.
		\end{enumerate}
		\item If $(M_{i}^{n},g_{i})$ satisfies \eqref{eq1.4}, then
		\begin{enumerate}
			\item[(b1)] $\mathcal{R}$ is an open subset of $X$.
			\item[(b2)] $\mathcal{R}$ is a $C^{1,\alpha}\cap W^{3,p}$ smooth manifold with a $C^{\alpha}\cap W^{2,p}$ metric $g_{\infty}$ which induces $d$, for any $\alpha\in(0,2-\frac{n}{p})$.\ Moreover, $g_{i}$ converges to $g_{\infty}$ in the $C^{\alpha}$ and weak $W^{2,p}$ topology on $\mathcal{R}$.
		\end{enumerate}
	\end{itemize}
\end{thm}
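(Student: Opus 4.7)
My plan is to adapt the Cheeger--Colding program \cite{cheeger1996lower,cheeger1997structure,cheeger2000structure} to the integral-curvature setting, using as drop-in tools the integral almost-rigidity results already collected in this section: the relative volume comparison (Theorem \ref{thm2.1}) with its doubling corollary (Corollary \ref{cor2.3}), the paired volume/GH rigidity (Theorems \ref{thm2.4} and \ref{thm2.5}), the almost metric cone theorem (Theorem \ref{thm2.6}), and the $\epsilon$-splitting theorem (Theorem \ref{thm2.7}). Volume doubling yields Gromov precompactness, so it suffices to analyze a fixed limit.

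For (a1), upper semi-continuity $\limsup vol(B(x_i,r))\leqslant \mathcal{H}^{n}(B(x_\infty,r))$ follows from GH upper semi-continuity together with the volume bound \eqref{eq2.3}. For the matching lower bound, I would cover $B(x_\infty,r)$ at a fine scale $\rho$ by balls centered at regular points---dense by (a2)---and on the corresponding manifold approximants invoke Theorem \ref{thm2.4} to deduce almost-Euclidean volume on each, then sum. For (a2), given $x\in X$ and a blow-up sequence $r_k\to 0$, volume doubling allows extraction of a subsequence along which $\frac{\mathcal{H}^{n}(B(x,r_k))}{\omega_n r_k^{n}}$ stabilizes, and Theorem \ref{thm2.6}, lifted to the approximants, forces the tangent cone to be a metric cone. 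Splitting off lines in any nontrivial metric cone then comes from applying Theorem \ref{thm2.7} to suitable radial-like functions, which yields an isometric $\mathbb{R}$-factor.

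For (a4), I would run the standard quantitative covering argument of \cite{cheeger1997structure}: at each scale $r$, cover $\mathcal{S}^{k}\cap B(x_\infty,1)$ by balls whose approximants admit no $\delta$-splitting map into $\mathbb{R}^{k+1}$; volume doubling paired with Theorem \ref{thm2.7} bounds the covering number appropriately, yielding $\dim_{\mathcal{H}}\mathcal{S}^{k}\leqslant k$. Statement (a3) is the classical ``no codimension-one singularities'' result, obtained by noting that a hypothetical tangent cone of the form $\mathbb{R}^{n-1}\times C(Z)$ has one-dimensional fiber $C(Z)$; translating $x$ along the $\mathbb{R}^{n-1}$-factor and invoking Theorems \ref{thm2.5} and \ref{thm2.6} at generic axis points would then force $C(Z)=\mathbb{R}$, i.e.\ the tangent cone is $\mathbb{R}^{n}$.

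Now assume \eqref{eq1.4}. For (b1), pick $x\in \mathcal{R}$ and $r\leqslant r_0$ with $d_{GH}(B(x,r),B(0^{n},r))<\frac{\epsilon}{3}r$; for any $y\in X$ sufficiently close to $x$ and any deep approximant $y_i\to y$, the triangle inequality yields $d_{GH}(B(y_i,r'),B(0^{n},r'))<\epsilon r'$ at a slightly smaller scale $r'$, and Theorem \ref{thm2.10} supplies a uniform lower bound on $r_{h}^{2,p}(y_i)$. Openness of $\mathcal{R}$ is then immediate, and the uniform $W^{2,p}$ harmonic-coordinate control makes $\{g_i\}$ precompact in $C^{\alpha}\cap W^{2,p}$ on compact subsets of $\mathcal{R}$; the limit metric $g_\infty$ induces $d$. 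An elliptic bootstrap using the weak Ricci equation $g^{ab}\partial_a\partial_b g_{ij} + Q(g,\partial g) = -2\mathrm{Ric}_{ij}$ in harmonic coordinates together with $\mathrm{Ric}\in L^p$ upgrades the regularity to $C^{1,\alpha}\cap W^{3,p}$, giving (b2). The main obstacle is (a3): the classical proof traditionally exploits properties of Busemann functions and distance functions that become delicate under integral Ricci bounds, so one must carefully invoke the $L^p$ segment inequality of \cite{chen2022segment,petersen2001analysis} in place of the pointwise segment inequality to sustain the iteration from an $(n-1)$-splitting to the full $n$-splitting.
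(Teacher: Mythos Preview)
The paper does not supply its own proof of Theorem \ref{thm2.16}; it presents the result as background, stating that it ``can be proved by similar arguments as that of Cheeger--Colding \cite{cheeger1996lower,cheeger1997structure,cheeger2000structure} (see also \cite{tian2016regularity} and references therein).'' Your proposal is exactly this adaptation, built on the integral-curvature tools collected in Section \ref{2}, so in spirit it matches what the paper is invoking.

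One small point worth tightening: your route to the lower bound in (a1) covers $B(x_\infty,r)$ by balls centered at regular points ``dense by (a2),'' but (a2) as stated only asserts that tangent cones are metric cones; density of $\mathcal{R}$ is a consequence of (a4), and in any case the standard Colding argument for volume convergence \cite{colding1997ricci,petersen2001analysis} does not rely on the stratification at all---it proceeds directly via the almost-splitting/almost-maximal-volume mechanism (Theorems \ref{thm2.4}--\ref{thm2.7}) applied to the approximating manifolds. Reordering to prove (a1) first, independently of (a2)--(a4), removes the apparent circularity and matches the logical structure in the references the paper cites.
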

\indent Now we review Yang's orbifold type limit theorem in \cite{yang1992convergence,yang1992convergence2}.
\begin{thm}\label{thm2.17}
	Let $(M_{i}^{n},g_{i})$ be a sequence of Riemannian manifolds satisfying \eqref{eq1.2}, \eqref{eq1.4}, \eqref{eq1.6} and \eqref{eq1.7}, for some $v,D,\lambda,\Lambda>0$ and $p>\frac{n}{2}$.\ Then there exists a subsequece converging in the Gromov-Hausdorff topology to an orbifold $(V,g)$ with finite singular points, where $g$ is a $C^{\alpha}\cap W^{2,p}$ metric off the singular points for all $\alpha\in(0,2-\frac{n}{p})$.\ Moreover, $g_{i}$ converges to $g$ in the $C^{\alpha}$ and weak $W^{2,p}$ topology away from the singular points.
\end{thm}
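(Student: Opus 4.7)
The plan is to combine Gromov-type precompactness with an $\epsilon$-regularity theorem for the Riemannian curvature and then perform an orbifold desingularization at the finitely many curvature-concentration points. Under \eqref{eq1.2}, \eqref{eq1.4} and \eqref{eq1.6}, Corollary \ref{cor2.3} gives volume doubling up to the diameter scale, so Gromov's precompactness theorem extracts a GH-convergent subsequence $(M_i^n, g_i) \xrightarrow{GH} (X,d)$ with $X$ compact of diameter at most $D$. Next I would establish an $L^{n/2}$ $\epsilon$-regularity statement: there exist $\epsilon_0 = \epsilon_0(n,p,v,\lambda)$ and $r_0$ such that whenever $r \leq r_0$ and $\int_{B(x,r)} |Rm|^{n/2} \leq \epsilon_0$, one has $r_h^{\alpha}(x) \geq r/2$ for any $\alpha \in (0, 2 - n/p)$. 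The proof is by blow-up contradiction in the spirit of \cite{anderson1990convergence}: rescaling counterexamples by the harmonic radius produces a noncollapsed limit (the relative volume inequality of Theorem \ref{thm2.1} scales cleanly), with vanishing Ricci tensor in the $L^p$ sense (supercritical $p > n/2$) and vanishing $L^{n/2}$ Riemann curvature (scale-invariant norm), hence a flat $\mathbb{R}^n$ limit which forces $r_h^{\alpha} = \infty$ via Theorem \ref{thm2.10} --- a contradiction.

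Using the $\epsilon$-regularity, I would define the singular set to be those $x \in X$ such that for every $r > 0$ and every sequence $x_i \to x$,
\[
\limsup_{i \to \infty} \int_{B_{g_i}(x_i, r)} |Rm_{g_i}|^{n/2} \geq \tfrac{\epsilon_0}{2}.
\]
A Vitali-type covering argument together with the uniform energy bound \eqref{eq1.7} then shows $\#\mathcal{S} \leq N(\Lambda, \epsilon_0)$, so $\mathcal{S}$ is finite. On $X \setminus \mathcal{S}$, the $\epsilon$-regularity step supplies uniform harmonic radius lower bounds on a neighbourhood of every point, so by standard harmonic-coordinate convergence theory (cf.\ Remark \ref{rem2.9}(3) and Theorem \ref{thm2.16}(b2)), $g_i$ converges to a Riemannian metric $g$ on $X \setminus \mathcal{S}$ in the $C^{\alpha}\cap W^{2,p}$ topology.

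The core difficulty is exhibiting each $p \in \mathcal{S}$ as an orbifold point. Fix a tangent cone $(C_pX, o) = \lim_{j \to \infty}(X, r_j^{-1} d, p)$ with $r_j \to 0$; by Theorem \ref{thm2.16}(a2) it is a metric cone $C(Y)$. Since $\mathcal{S}$ is discrete in $X$, for every $\delta > 0$ the rescaled annulus $B(p, r_j) \setminus B(p, \delta r_j)$ contains no singular points for $j$ large, so Step 2 applies and furthermore the total $L^{n/2}$ curvature concentrated in these annuli tends to zero. Thus $C_pX \setminus \{o\}$ is flat and, combined with the cone structure, $Y$ is a spherical space form $S^{n-1}/\Gamma$ for some finite $\Gamma \subset O(n)$ acting freely, whence $C_pX \cong \mathbb{R}^n/\Gamma$. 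To upgrade this tangent-cone information to an actual orbifold chart, I would lift a punctured neighbourhood $U \setminus \{p\}$ to its orbifold cover $\widetilde{U} \setminus \{0\}$, pull back the $\epsilon$-splitting frame from Theorem \ref{thm2.7}, and invoke a removable-singularity argument in the harmonic gauge $\Delta g_{ij} = Q(g, \partial g) + 2\,Ric_{ij}$ to extend the lifted metric across $0$ with $C^{\alpha}\cap W^{2,p}$ regularity, using the supercritical $L^p$ Ricci bound as the source term control.

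The main obstacle is the last step: in the integral-Ricci setting the classical Einstein removable-singularity lemma is not directly available, and identifying the tangent cone as $\mathbb{R}^n/\Gamma$ requires carefully combining the almost-cone rigidity of Theorem \ref{thm2.6}, the $L^{n/2}$-smallness of curvature on annuli, and the construction of a consistent orbifold cover. A secondary technical point is uniqueness of the tangent cone (needed for a well-defined orbifold group $\Gamma$), which can be addressed through the monotonicity of the normalized volume ratio provided by Theorem \ref{thm2.1}, since the cone vertex is an isolated singular point and the volume density stabilizes as $r \to 0$.
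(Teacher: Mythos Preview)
The paper does not prove Theorem \ref{thm2.17}; it is quoted from Yang \cite{yang1992convergence,yang1992convergence2} as background (see the sentence introducing it and Remark \ref{rem2.18}). So there is no ``paper's own proof'' to compare against. Your outline is essentially the Anderson--Yang strategy that those references carry out: Gromov precompactness, an $L^{n/2}$ $\epsilon$-regularity theorem giving a harmonic-radius lower bound, a covering argument bounding the number of curvature-concentration points, $C^{\alpha}\cap W^{2,p}$ convergence on the regular part, identification of tangent cones as flat $\mathbb{R}^{n}/\Gamma$, and finally a removable-singularity step in the orbifold cover. That is the right architecture.

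Two small corrections are worth flagging. First, in your blow-up proof of $\epsilon$-regularity you write ``vanishing $L^{n/2}$ Riemann curvature (scale-invariant norm)''; since $\|Rm\|_{L^{n/2}}$ is scale-invariant it does \emph{not} vanish in the limit but only stays $\leq\epsilon_{0}$. The contradiction comes instead from the fact that the blow-up limit is complete, noncollapsed, Ricci-flat (because $p>\tfrac{n}{2}$ is supercritical), with harmonic radius $\geq 1$ everywhere, and has $\int|Rm|^{n/2}\leq\epsilon_{0}$; a Moser iteration on $\Delta|Rm|\geq -c|Rm|^{2}$ then gives a pointwise bound $|Rm|\leq C\epsilon_{0}^{2/n}$, which for $\epsilon_{0}$ small forces the limit to be flat $\mathbb{R}^{n}$ via the Bishop--Gromov equality case. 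Second, you do not need tangent-cone uniqueness as a separate input: once the $\epsilon$-regularity shows each dyadic annulus $A_{r/2,2r}(p)$ is $C^{\alpha}$-close to a flat annulus in $\mathbb{R}^{n}/\Gamma_{r}$, the groups $\Gamma_{r}$ are locally constant in $r$ (they are discrete invariants), so a single $\Gamma$ works for all small $r$ and the orbifold cover is consistent. The genuine work, as you correctly identify, is the removable singularity for the lifted metric in harmonic gauge with only an $L^{p}$ Ricci source term; this is precisely what Yang's papers supply.
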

\begin{rem}\label{rem2.18}
	Theorem \ref{thm2.17} was analogous to Anderson's results in \cite{anderson1990convergence,anderson1989ricci} (see also \cite{bando1989construction}), where he proved a similar theorem with the stronger condition $|Ric|\leqslant\lambda$.\ In this case, one can obtain $C^{1,\alpha}\cap W^{2,q}$ regularity of the metric for every $0<\alpha<1$ and $q<\infty$, which was used in Anderson-Cheeger's proof for their diffeomorphism finiteness theorem in \cite{anderson1991diffeomorphism}.
\end{rem}
In Section \ref{4}, we will use Cheeger-Naber's argument in \cite{cheeger2015regularity}, which turns out to be more effective in nature than the one in \cite{anderson1991diffeomorphism}, to prove our diffeomorphism finiteness theorem (Theorem \ref{thm1.2}).
\subsection{Elliptic estimate}
In this subsection, we review Cheeger's elliptic estimate in \cite{cheeger2003integral}, which is crucial in proving our $\epsilon$-regularity theorem in Section \ref{3}.\\
\indent Let $[a]$ denote the greatest integer $\leqslant a$ and let $k\in\mathbb{Z}_{+}$.\ For $1\leqslant q\leqslant\frac{n}{2}$, define
\begin{align}
	\tilde{q}=
	\begin{cases}
		[q-\frac{1}{2}] & \text{if } q\neq\frac{2k+1}{2},\\
		[q-\frac{1}{2}]-1 & \text{if } q=\frac{2k+1}{2}.
	\end{cases}
\end{align}
\begin{thm}\label{thm2.19}
	Let $x\in M^{n}$ and $u:B(x,1)\to\mathbb{R}$ satisfy $\Delta u=c$ for some constant c.\ Then for $q>\frac{3}{2}$, there exists $\epsilon(q)>0$ such that 
	\begin{align}
		V^{2\tilde{q}}\fint_{B(x,\frac{3}{4})}|\nabla^{2}u|^{2q-2\tilde{q}}+V^{2q}\sum_{j=q-\tilde{q}+1}^{q}\fint_{B(x,1)}|Rm|^{j}\geqslant\epsilon(q)\fint_{B(x,\frac{1}{2})}|\nabla^{2}u|^{2q},\label{eq2.13}
	\end{align}
where $V=\sup\limits_{B(x,\frac{7}{8})}|\nabla u|$.
\end{thm}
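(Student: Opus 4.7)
The starting point is the Bochner formula. Setting $f = |\nabla u|^2$, the hypothesis $\Delta u = c$ (so $\nabla \Delta u = 0$) reduces it to
\[
|\nabla^2 u|^2 = \tfrac{1}{2}\Delta f - Ric(\nabla u,\nabla u).
\]
My plan is to prove, by iteration, a \emph{one-step Caccioppoli-type inequality}
\[
\fint_{B(x,r_1)} |\nabla^2 u|^{2s} \le C(s)\,\Big(V^2 \fint_{B(x,r_2)} |\nabla^2 u|^{2s-2} + V^{2s}\fint_{B(x,r_2)} |Rm|^s\Big), \qquad r_1 < r_2,
\]
for every $s > 1$, and then to iterate this reduction $\tilde q$ times, lowering the exponent on the right-hand side from $2q$ down to $2q - 2\tilde q$ while shrinking the radius from $1$ to $\tfrac12$.

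For the one-step inequality, I would multiply the identity $|\nabla^2 u|^{2s} = |\nabla^2 u|^{2s-2}\bigl(\tfrac{1}{2}\Delta f - Ric(\nabla u,\nabla u)\bigr)$ by a smooth cutoff $\phi^2$ intermediate between $B(x,r_1)$ and $B(x,r_2)$ and integrate. The Ricci contribution is bounded pointwise by $V^2|Rm||\nabla^2 u|^{2s-2}$; by H\"older and Young this becomes $\varepsilon \int \phi^2|\nabla^2 u|^{2s} + C V^{2s}\int \phi^2|Rm|^s$, with the first summand absorbable into the left-hand side. The Laplacian term $\tfrac{1}{2}\int \phi^2|\nabla^2 u|^{2s-2}\Delta f$ is integrated by parts once, producing a cutoff-derivative piece of the form $V\int \phi|\nabla \phi||\nabla^2 u|^{2s-1}$ (itself absorbed after a further Young inequality) and an interior piece of the form $V\int \phi^2|\nabla^2 u|^{2s-2}|\nabla^3 u|$, which is the main technical obstacle. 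To handle the latter I apply a secondary Bochner identity for $|\nabla^2 u|^2$: since $\nabla^2\Delta u = 0$,
\[
\tfrac{1}{2}\Delta|\nabla^2 u|^2 = |\nabla^3 u|^2 + \bigl\langle \nabla^2 u,\,[\Delta, \nabla^2]u\bigr\rangle,
\]
and the commutator is schematically $Rm\ast \nabla^2 u + \nabla Rm \ast \nabla u$. The $\nabla Rm$ piece can be integrated by parts once more to transfer the derivative off $Rm$, producing only contractions of $Rm$ with $\nabla^3 u\cdot \nabla u$ and with $|\nabla^2 u|^2$ plus cutoff-derivative terms, so that no $\nabla Rm$ survives in the final estimate. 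A Cauchy--Schwarz absorption of the leftover $|\nabla^3 u|^2$ then yields
\[
\int \phi^2 |\nabla^2 u|^{2s-4}|\nabla^3 u|^2 \le C\int|\nabla\phi|^2 |\nabla^2 u|^{2s-2} + CV^2\int \phi^2|Rm||\nabla^2 u|^{2s-2},
\]
which is exactly what is needed to close the one-step inequality.

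With the one-step reduction in hand I iterate it starting from $s = q$, each iteration decreasing the exponent by $2$ and shrinking the radius by a fixed fraction, until the remaining exponent is $2q - 2\tilde q$. The curvature terms generated at each step are consolidated by repeated H\"older into the single integral $V^{2q}\int|Rm|^q$. The requirement $s > 1$ (so that the factor $(s-1)$ from differentiating $|\nabla^2 u|^{2s-2}$ does not degenerate, and so that the H\"older exponents pair meaningfully) caps the iteration at $[q - \tfrac12]$ steps; when $q = (2k+1)/2$, the final would-be step lands on $s = 3/2$, where the Young pairing used in the $|\nabla^3 u|$ absorption becomes critical and leaves no room for absorption, forcing the iteration to terminate one step earlier and explaining the definition $\tilde q = [q - \tfrac12] - 1$ in that borderline case. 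The main obstacle throughout is the treatment of the third-order term $|\nabla^3 u|$ without any assumed integral bound on $\nabla Rm$; this is precisely what forces the use of the secondary Bochner identity and the extra integration by parts on the $\nabla Rm$ contribution.
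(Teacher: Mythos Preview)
The paper does not give its own proof of Theorem~2.19; it is quoted from Cheeger~\cite{cheeger2003integral} as background (Section~2.5), followed only by Remarks~2.20--2.21. So there is no in-paper argument to compare against directly, only Cheeger's original.

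Your outline is a faithful reconstruction of Cheeger's proof. The one-step reduction via Bochner, the appearance of the third-order term $\int\phi^{2}|\nabla^{2}u|^{2s-2}|\nabla^{3}u|$ after integrating $\int\phi^{2}|\nabla^{2}u|^{2s-2}\Delta f$ by parts, the use of the Weitzenb\"ock formula for $\nabla^{2}u$ to bound $\int\phi^{2}|\nabla^{2}u|^{2s-4}|\nabla^{3}u|^{2}$, and the further integration by parts to eliminate $\nabla Rm$ are exactly the steps in Cheeger's Section~1. Your identification of the obstruction at $s=3/2$ is also correct: in the integration by parts of $\tfrac{1}{2}\int\phi^{2}|\nabla^{2}u|^{2s-4}\Delta|\nabla^{2}u|^{2}$ the interior term carries the coefficient $-(2s-4)$, and when $s<2$ this becomes a positive multiple $(4-2s)$ of the very quantity $\int\phi^{2}|\nabla^{2}u|^{2s-4}|\nabla^{3}u|^{2}$ being estimated; absorption requires $4-2s<1$, i.e.\ $s>3/2$, which explains both the definition of $\tilde q$ in the half-integer case and the degeneration $\epsilon(q)\to 0$ as $q\to 3/2$ noted in Remark~2.21. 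One small correction: the binding constraint in the iteration is $s>3/2$ from this absorption, not merely $s>1$; the factor $(s-1)$ you mention is harmless by comparison.
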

\begin{rem}\label{rem2.20}
	Recall that if we assume $M^{n}$ satisfies \eqref{eq1.1} and \eqref{eq1.2}, then up to a rescaling on metric, $V$ can be estimated in terms of $\sup\limits_{B(x,1)}|u|$ by gradient estimates derived in \cite{dai2018local}.
\end{rem}
\begin{rem}\label{rem2.21}
	In \eqref{eq2.13}, the constant $\epsilon(q)$ goes to zero as $q\to\frac{3}{2}$.\ For $q>\frac{3}{2}$, under the condition \eqref{eq1.2} and \eqref{eq1.3}, the estimate \eqref{eq2.13} can be used to obtain $L^{2q}$ estimates for $\nabla^{2}u$, if we are given $L^{2}$ estimates for $\nabla^{2}u$.\ See Section 1 in Cheeger's paper \cite{cheeger2003integral} for more details.
\end{rem}

\section{The $\epsilon$-regularity theorem and dimension estimates for singular set}\label{3}
In this section, we present an $\epsilon$-regularity theorem which is analogous to the one in \cite{cheeger2003integral} (see also \cite{132bfa331fc94e1ea9598bc9420082e4} for the critical case $q=1$).\ And then we will apply this $\epsilon$-regularity theorem to prove Theorem \ref{thm1.1}.
\subsection{$\epsilon$-regularity theorems}
We first recall some basic facts.\ As before, we always assume $p>\frac{n}{2}$ and $1\leqslant q\leqslant\frac{n}{2}$.\\
\indent Consider a manifold $(M^{n},g)$ satisfying \eqref{eq1.1} and \eqref{eq1.2}.\ By rescaling on metric, we may assume 
\begin{gather*}
	\int_{M^{n}}|Ric_{-}|^{p}\leqslant\delta^{2p-n},\\
	vol(B(x,r))\geqslant vr^{n}\ \text{for all}\ x\in M^{n}\ \text{and}\ r\leqslant\delta^{-1}.\notag
\end{gather*}
We denote a point in $\mathbb{R}^{n-k}\times C(Y)$ by $(z,r,y)$, where $z=(z_{1},...,z_{n-k})\in\mathbb{R}^{n-k}$ and $(r,y)$ is expressed in polar coordinates of $C(Y)$.\ Let $y^{*}$ be the cone vertex of $C(Y)$.\ Assume 
\begin{align}
	d_{GH}(B(x,\delta^{-1}),B((0^{n-k},y^{*}),\delta^{-1}))<\delta.\label{eq3.1}
\end{align}
Then by Theorem \ref{thm2.7}, there is a $\psi(\delta|n,p,v)$-splitting map 
\begin{align}
b=(b_{1},...,b_{n-k}):B(x,3)\to\mathbb{R}^{n-k},\label{eq3.2}
\end{align}
where we denote $\psi(c_{1},...,c_{s}|n_{1},...,n_{t})$ to be any nonnegative function such that if $n_{1},...,n_{t}$ are fixed and $c_{1},...,c_{s}$ go to zero, then $\psi$ tends to zero.\ In addition, we know from \cite{cheeger1996lower} that there is a Gromov-Hausdorff approximation $F:B(x,\delta^{-1})\to B((0^{n-k},y^{*}),\delta^{-1})$ realizing \eqref{eq3.1} such that 
\begin{align*}
	b_{j}=z_{j}\circ F.
\end{align*}
\indent Following lines of Cheeger-Colding-Tian \cite{132bfa331fc94e1ea9598bc9420082e4}, we may obtain some additional facts.\ There exists $u:B(x,3)\to\mathbb{R}_{+}$ satisfying
\begin{gather}
	u=r\circ F,\label{eq3.3}\\
	\Delta u^{2}=2k,\notag\\
	\fint_{B(x,3)}||\nabla u^{2}|-2u|^{2}+\sum_{j}|\langle\nabla u^{2},\nabla b_{j}\rangle|<\psi(\delta|n,p,v),\label{eq3.4}\\
	\fint_{B(x,3)}|\nabla^{2}u^{2}-2(g-\sum_{i,j}db_{i}\otimes db_{j})|^{2}<\psi(\delta|n,p,v),\label{eq3.5}\\
	|\nabla u^{2}|\leqslant C(n).\notag
\end{gather}
Define $\phi:B(x,3)\to\mathbb{R}^{n-k}\times\mathbb{R}_{+}$ by 
\begin{align}
	\phi=(b,u),\label{eq3.6}
\end{align} 
and let 
\begin{align*}
	U_{r}=\phi^{-1}(B(0^{n-k},1)\times[0,r]).
\end{align*}
\indent Then there exists $C_{\delta}\subset B(0^{n-k},1)$ with $vol(C_{\delta})\geqslant(1-\psi)vol(B(0^{n-k},1))$, such that for any $z\in C_{\delta}$,
\begin{align}
	\left\lvert vol_{k}(b^{-1}(z)\cap U_{r})-\frac{r^{k}}{k}vol(Y)\right\rvert\leqslant\psi.\label{eq3.7}
\end{align}
Similarly, there exists $D_{\delta}\subset B(0^{n-k},1)\times[0,1]$ with $vol(D_{\delta})\geqslant(1-\psi)vol(B(0^{n-k},1)\times[0,1])$, such that for any $(z,r)\in D_{\delta}$,
\begin{align}
	\left\lvert vol_{k-1}(\phi^{-1}((z,r)))-r^{k-1}vol(Y)\right\rvert\leqslant\psi.\label{eq3.8}
\end{align}
\indent Before stating the $\epsilon$-regularity theorem, we first prove a lemma which is analogous to Lemma 4.14 in \cite{cheeger2003integral}.
\begin{lem}\label{lem3.1}
	Let $M^{n}$ satisfy \eqref{eq1.1} and \eqref{eq1.2}.\ Let $f:B(m,1)\to\mathbb{R}^{n-k}$ be a Lipschitz function with $|\textnormal{Lip}f|\leqslant l$ and $h:B(m,3)\to\mathbb{R}$ be a smooth function.\ Denote
	\[S_{\eta}=f\left(\{x\in B(m,1):|h(x)|\geqslant\eta\}\right).\]
	Then for all $\mu,\eta>0$, there exists $\epsilon=\epsilon(n,p,v,\lambda,l,\mu,\eta,q,k)$ such that if $2q>k$ and
	\begin{gather}
		\fint_{B(m,3)}|\nabla h|^{2q}\leqslant\mu,\\
		\fint_{B(m,3)}|h|\leqslant\epsilon,\label{eq3.10}
	\end{gather}
then $\mathcal{H}^{n-k}(S_{\eta})\leqslant\eta$.
\end{lem}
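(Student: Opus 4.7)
The plan is to adapt the covering-argument strategy of Lemma 4.14 in Cheeger \cite{cheeger2003integral}: combine a Vitali covering of the super-level set $E_{\eta} := \{x \in B(m,1) : |h(x)| \ge \eta\}$ at a well-chosen scale with the $L^{2q}$ Poincar\'e inequality (valid for manifolds with integral Ricci bounds, cf.\ \cite{dai2018local,petersen2001analysis}), and then transfer the resulting estimate to $S_{\eta} = f(E_{\eta}) \subset \mathbb{R}^{n-k}$ via the Lipschitz image bound $\mathcal{H}^{n-k}(f(A)) \le l^{n-k} \mathcal{H}^{n-k}(A)$. The codimension condition $2q > k$ enters precisely as the exponent that makes the final scaling estimate vanish.

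First, I would fix a small scale $r_{0} > 0$ (to be chosen later in terms of $n,p,v,\lambda,l,\mu,\eta,q,k$). By the volume doubling \eqref{eq2.4} from Corollary \ref{cor2.3} and the hypothesis \eqref{eq3.10}, for every $x \in B(m,1)$ one has
\[
\bigl|\bar h_{B(x, r_{0})}\bigr| \;\le\; \frac{\mathrm{vol}(B(m,3))}{\mathrm{vol}(B(x,r_{0}))}\,\epsilon \;\le\; C(n,p,v,\lambda)\, r_{0}^{-n}\, \epsilon \;\le\; \eta/2
\]
provided $\epsilon$ is sufficiently small. Consequently for every $x \in E_{\eta}$, $|h(x) - \bar h_{B(x,r_{0})}| \ge \eta/2$. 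Combining the smoothness of $h$ with the $L^{2q}$ Poincar\'e inequality on $B(x, r_{0})$, this pointwise deviation is converted into the local lower bound
\[
\int_{B(x, r_{0})} |\nabla h|^{2q} \;\ge\; c(n,p,v,\lambda)\, \eta^{2q}\, r_{0}^{\,n-2q}.
\]

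Second, apply the Vitali covering lemma to extract a pairwise disjoint family $\{B(x_{i}, r_{0}/5)\}_{i=1}^{N}$ with $x_{i} \in E_{\eta}$ such that $\{B(x_{i}, r_{0})\}$ covers $E_{\eta}$. Summing the local gradient estimate over the disjoint family and using the global bound $\int_{B(m,3)} |\nabla h|^{2q} \le C(n,p,v,\lambda)\,\mu$, one obtains the combinatorial bound
\[
N \;\le\; C\, \mu\, \eta^{-2q}\, r_{0}^{\,2q-n}.
\]
Finally, the Lipschitz hypothesis $|\mathrm{Lip}\,f| \le l$ gives $f(B(x_{i}, r_{0})) \subset \bar B(f(x_{i}), l r_{0})$, and hence
\[
\mathcal{H}^{n-k}(S_{\eta}) \;\le\; c_{n-k}\, N\, (l r_{0})^{n-k} \;\le\; C\, l^{n-k}\, \mu\, \eta^{-2q}\, r_{0}^{\,2q-k}.
\]
The hypothesis $2q > k$ is exactly what guarantees $r_{0}^{2q-k} \to 0$ as $r_{0} \to 0$. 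Choosing first $r_{0}$ small enough to make this bound $\le \eta$, and then $\epsilon$ small enough to validate the first step, yields the lemma with $\epsilon = \epsilon(n,p,v,\lambda,l,\mu,\eta,q,k)$.

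The main obstacle is the conversion in the first step from the pointwise information $|h(x) - \bar h_{B(x,r_{0})}| \ge \eta/2$ to the integral lower bound on $\int_{B(x,r_{0})} |\nabla h|^{2q}$. A single application of Poincar\'e controls only an $L^{2q}$ average of $h - \bar h$, and dyadic telescoping of averages through concentric subballs converges only when $2q > n$, which is stronger than our hypothesis $2q > k$. The resolution, following Cheeger, is more delicate: one replaces the uniform scale $r_{0}$ by an $x$-dependent radius $r(x)$ chosen via a stopping-time or maximal-function argument so that the Poincar\'e-Sobolev inequality applies at exactly the right scale to produce the gradient lower bound, and one verifies that the subsequent Vitali step and Lipschitz image estimate still yield the final exponent $r_{0}^{2q-k}$. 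Carrying this choice of scales out precisely in the integral Ricci setting---using the relative volume comparison \eqref{eq2.1} and the $L^{2q}$ Neumann-type Poincar\'e inequality on balls---is the technical heart of the proof.
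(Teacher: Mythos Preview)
Your proposal lands on the same circle of ideas as the paper's proof, and your ``obstacle'' paragraph in fact describes the right fix; the paper simply executes that fix directly rather than first attempting the fixed-scale version. Concretely, the paper defines the family of \emph{bad balls}
\[
\mathcal{B}=\Bigl\{B(x,r):x\in B(m,2),\ r\le 1,\ \fint_{B(x,r)}|\nabla h|^{2q}\ge Ar^{-k}\Bigr\},
\]
and the argument splits into two independent pieces. First, a Vitali $5$-covering of $\mathcal{B}$ together with the global bound $\fint|\nabla h|^{2q}\le\mu$ gives $\sum_t r_t^{n-k}\le C\mu/A$, which controls $\mathcal{H}^{n-k}$ of $\bigcup\mathcal{B}$ and hence of its Lipschitz image; $A$ is then chosen so that this is $\le \eta l^{-(n-k)}$. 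Second, one shows $E_\eta\subset\bigcup\mathcal{B}$ by contrapositive: if $B(x,r)\notin\mathcal{B}$ for every $r\le 1$, the $(1,2q)$ Poincar\'e inequality and volume doubling give $|h_{x,r}-h_{x,r/2}|\le C A^{1/2q} r^{1-k/2q}$, and since $2q>k$ this telescopes to $|h(x)-h_{x,r}|<\eta/2$ for $r$ small depending only on the fixed constants; then \eqref{eq3.10} forces $|h_{x,r}|<\eta/2$ and hence $|h(x)|<\eta$.

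The difference from your write-up is organizational rather than mathematical: you tried to produce a \emph{lower} gradient bound at a single scale $r_0$ and then correctly flagged that this only follows when $2q>n$; the paper avoids that dead end by never fixing a scale and instead letting the threshold $Ar^{-k}$ carry the scale dependence. In particular, there is no ``$r_0^{2q-k}$'' in the final estimate---the smallness comes from choosing the \emph{constant} $A$ large, not a scale small---and the condition $2q>k$ enters only through the summability of the telescoping series $\sum_j r_j^{1-k/2q}$.
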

\begin{proof}
	Consider the collection 
	\[\mathcal{B}=\{B(x,r):x\in B(m,2),r\leqslant1,\fint_{B(x,r)}|\nabla h|^{2q}\geqslant Ar^{-k}\},\] where $A$ is a constant to be determined later.\ By the 5-times covering lemma, there is a disjoint subcollection $\{B(m_{t},\frac{r_{t}}{5})\}$, such that $\bigcup_{t}B(m_{t},r_{t})\supseteq\bigcup_{B\in\mathcal{B}}B$.\ Then
	\begin{align*}
	\sum_{t}\frac{vol(B(m_{t},\frac{r_{t}}{5}))}{vol(B(m,3))}\frac{A}{\mu}\left(\frac{r_{t}}{5}\right)^{-k}
		&\leqslant\sum_{t}\frac{vol(B(m_{t},\frac{r_{t}}{5}))}{vol(B(m,3))}\frac{1}{\mu}\fint_{B(m_{t},\frac{r_{t}}{5})}|\nabla h|^{2q}\\
		&\leqslant\frac{1}{\mu}\fint_{B(m,3)}|\nabla h|^{2q}\leqslant1.
	\end{align*}
	Thus, by the volume doubling property \eqref{eq2.4},
	\[\sum_{t}r_{t}^{-k}vol(B(m_{t},r_{t}))\leqslant C(n,p,v,\lambda)\frac{\mu}{A}vol(B(m,3)).\]
	By \eqref{eq1.2} and \eqref{eq2.3},
	\[\sum_{t}r_{t}^{n-k}\leqslant C(n,p,v,\lambda)\frac{\mu}{A}.\] 
	\indent We can now choose $A=A(n,p,v,\lambda,l,\mu,\eta,k)$ such that \[C(n,p,v,\lambda)\frac{\mu}{A}\leqslant\frac{\eta}{\omega_{n-k}l^{n-k}}.\]
	Since $\mathcal{H}^{n-k}(S_{\eta})\leqslant l^{n-k}\mathcal{H}^{n-k}\left(\{x\in B(m,1):|h(x)|\geqslant\eta\}\right)$, it suffices to show that there exists $\epsilon=\epsilon(n,p,v,\lambda,l,\mu,\eta,q,k)$, such that if $x\in B(m,1)$ and $|h(x)|\geqslant\eta$, then $B(x,r)\in\mathcal{B}$ for some $r\leqslant1$.\\
	\indent Fix $x\in B(m,1)$ and denote $h_{x,r}=\fint_{B(x,r)}h$.\ If $B(x,r)\notin\mathcal{B}$ for any $r\leqslant1$, then for some sufficiently small $r=r(n,p,v,\lambda,l,\mu,\eta,q,k)$,
	\begin{align}
		|h_{x,r}-h_{x,\frac{r}{2}}|\leqslant\fint_{B(x,\frac{r}{2})}|h-h_{x,r}|&\leqslant C(n,p,v,\lambda)\fint_{B(x,r)}|h-h_{x,r}|\notag\\
		&\leqslant C(n,p,v,\lambda,q)r\left(\fint_{B(x,r)}|\nabla h|^{2q}\right)^{\frac{1}{2q}}\label{eq3.11}\\
		&\leqslant C(n,p,v,\lambda,q)Ar^{1-\frac{k}{2q}}.\notag
	\end{align}
	and
	\begin{align}
	|h(x)|\leqslant|h|_{x,r}+C(n,p,v,\lambda,l,\mu,\eta,q,k)r^{1-\frac{k}{2q}}<|h|_{x,r}+\frac{\eta}{2}.\label{eq3.12}
	\end{align}
	\indent In \eqref{eq3.11}, we use the $(1,2q)$-type Neumann-Poincar\'e inequality, which can be obtained from the work of Dai-Wei-Zhang \cite{dai2018local}.\\
	\indent Finally, by \eqref{eq3.10} and volume doubling property \eqref{eq2.4}, we can choose $\epsilon$ such that $|h|_{x,r}<\frac{\eta}{2}$ and hence, $|h(x)|<\eta$.\ This completes the proof.
\end{proof}
\indent Now we can proceed to prove the $\epsilon$-regularity theorem, following the arguments of Cheeger \cite{cheeger2003integral} and Cheeger-Colding-Tian \cite{132bfa331fc94e1ea9598bc9420082e4}.
\begin{thm}\label{thm3.2}
	For any $\eta>0$, there exists $\epsilon=\epsilon(n,p,v,\lambda,\eta,q)$ and $\delta=\delta(n,p,v,\lambda,\eta,q)$, such that if $(M^{n},g)$ satisfies \eqref{eq1.1}, \eqref{eq1.2},
	\begin{align}
		\epsilon^{2q}\fint_{B(x,3\epsilon)}|Rm|^{q}\leqslant\delta,\label{eq3.13}
	\end{align}
	and for $(0^{n-k},y^{*})\in\mathbb{R}^{n-k}\times C(Y)$,
	\begin{align}
		d_{GH}(B(x,\delta^{-1}\epsilon),B((0^{n-k},y^{*}),\delta^{-1}\epsilon))<\delta\epsilon,\label{eq3.14}
	\end{align}
	where \[2q>
	\begin{cases}
		k, & \ k\ \text{even},\\
		k-1, & \ k\ \text{odd},
	\end{cases}\]
	then we have
	\[d_{GH}(B(x,\epsilon),B(0^{n},\epsilon))\leqslant\eta\epsilon.\]
\end{thm}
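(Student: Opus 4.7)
The plan is a compactness/contradiction argument driven by a Cheeger-Colding-Tian style elliptic bootstrap. After rescaling the metric by $\epsilon^{-1}$ I may assume $\epsilon=1$, so the hypotheses become $\fint_{B(x,3)}|Rm|^{q}\leqslant\delta$, $d_{GH}(B(x,\delta^{-1}),B((0^{n-k},y^{*}),\delta^{-1}))<\delta$, together with \eqref{eq3.1} and non-collapsing. Supposing the conclusion fails, I would take a sequence $(M_{i}^{n},g_{i},x_{i})$ satisfying these hypotheses with $\delta_{i}\to 0$ but $d_{GH}(B(x_{i},1),B(0^{n},1))>\eta$. Theorem \ref{thm2.7} then provides $\psi(\delta_{i}|n,p,v)$-splitting maps $b_{i}:B(x_{i},3)\to\mathbb{R}^{n-k}$, and the construction recalled in \eqref{eq3.3}--\eqref{eq3.6} produces accompanying almost-radial functions $u_{i}$ with $\Delta u_{i}^{2}=2k$, $|\nabla u_{i}^{2}|\leqslant C(n)$, and the $L^{2}$-controls \eqref{eq3.4}--\eqref{eq3.5}.

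Next, I would run the elliptic bootstrap of Theorem \ref{thm2.19} applied to $u_{i}^{2}$, starting from the $L^{2}$-Hessian bound coming from \eqref{eq3.5} and feeding in the $L^{q}$-curvature bound \eqref{eq3.13} together with the uniform gradient bound (which plays the role of $V$ in Theorem \ref{thm2.19}). As explained in Remark \ref{rem2.21}, the arithmetic relation between $2q$ and $k$ in the hypothesis is precisely what allows the recursion to close, namely $2\tilde{q}$ never overshoots the exponent already controlled at the previous iterate. The outcome is a uniform estimate
\begin{equation*}
\fint_{B(x_{i},1)}|\nabla^{2} u_{i}^{2}|^{2q}\leqslant C(n,p,v,\lambda,q).
\end{equation*}

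The next step is the crucial application of Lemma \ref{lem3.1}. I would apply it with $f=b_{i}$, which is Lipschitz of norm $\leqslant C(n)$ by \eqref{eq2.5}, and with $h_{i}=|\nabla u_{i}^{2}|^{2}-4u_{i}^{2}$. On the one hand, \eqref{eq3.4} gives $\fint|h_{i}|\leqslant\sqrt{\psi(\delta_{i}|n,p,v)}\to 0$; on the other hand, $\nabla h_{i}=2\nabla^{2}u_{i}^{2}(\nabla u_{i}^{2})-4\nabla u_{i}^{2}$, and the Hessian and gradient bounds just obtained yield $\fint|\nabla h_{i}|^{2q}\leqslant C$. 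Under the arithmetic assumption of the theorem, Lemma \ref{lem3.1} produces $\mathcal{H}^{n-k}(b_{i}(\{|h_{i}|\geqslant\eta'\}))\leqslant\eta'$ for any preassigned $\eta'$; that is, on $(n-k)$-almost every fiber $b_{i}^{-1}(z)$ the identity $|\nabla u_{i}|^{2}\approx 1$ holds pointwise, so $u_{i}$ restricts to a genuine distance-like function there. Passing to a pointed Gromov-Hausdorff limit, the balls $B(x_{i},\delta_{i}^{-1})$ converge to $\mathbb{R}^{n-k}\times C(Y_{\infty})$ with $u_{i}\to r_{\infty}$ the cone radial coordinate.

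The main obstacle is the final rigidity step. One has to pair the fiberwise pointwise information with the volume formulae \eqref{eq3.7}--\eqref{eq3.8} to show $vol(Y_{\infty})\geqslant(1-o(1))vol(S^{k-1})$, whence the almost-volume-cone/cone-rigidity package (Theorems \ref{thm2.4}, \ref{thm2.5}, \ref{thm2.6}) forces $Y_{\infty}=S^{k-1}$ and thus $X_{\infty}=\mathbb{R}^{n}$, contradicting $d_{GH}(B(x_{i},1),B(0^{n},1))>\eta$ for large $i$. Heuristically, any cone deficit in $Y_{\infty}$ would produce a singular contribution to $\nabla^{2}r_{\infty}^{2}$ supported on the $(n-k)$-dimensional axis $\mathbb{R}^{n-k}\times\{y^{*}\}$ with weight proportional to $vol(S^{k-1})-vol(Y_{\infty})$; the $L^{2q}$-bound on $\nabla^{2}u_{i}^{2}$ survives in the limit and rules out such a singular part precisely under the hypothesis on $2q$. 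Making this distributional idea rigorous in the nearly-cone setting, and tracking the error functions $\psi$ through the bootstrap and through Lemma \ref{lem3.1}, is the main technical effort; the remainder follows the template of \cite{cheeger2003integral} and \cite{132bfa331fc94e1ea9598bc9420082e4}.
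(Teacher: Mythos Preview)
Your setup through the Hessian bootstrap is essentially correct and matches the paper: rescale, build the splitting map $b$ and the radial function $u$, and use Theorem~\ref{thm2.19} to upgrade the $L^{2}$ Hessian control to $L^{2q}$ (or $L^{2q_{1}}$ with $2q-1<2q_{1}<2q$). The divergence begins with how you extract rigidity from these bounds.

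The final step you propose does not work, and the heuristic behind it is incorrect. On the model space $\mathbb{R}^{n-k}\times C(Y)$, the function $r^{2}$ satisfies $\nabla^{2}r^{2}=2g_{C(Y)}$ (extended by zero in the $\mathbb{R}^{n-k}$ directions) away from the axis, \emph{regardless} of $vol(Y)$. A direct divergence computation over a tube of radius $\rho$ around the axis shows that $\Delta r^{2}=2k$ holds distributionally with no singular contribution: the boundary flux $2\rho\cdot\rho^{k-1}vol(Y)$ exactly matches the bulk integral of $2k$ over a region of volume $\rho^{k}vol(Y)/k$. Hence an $L^{2q}$ bound on $\nabla^{2}u_{i}^{2}$, no matter how large $2q$ is relative to $k$, cannot by itself detect a cone-angle deficit; the bound passes to the limit trivially because the limiting Hessian is bounded. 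Your proposed mechanism for forcing $Y_{\infty}=S^{k-1}$ therefore has no content.

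What the paper actually does is quite different and genuinely topological. After the bootstrap, one uses the coarea formula to find a \emph{good fiber}: for $k$ even the $k$-submanifold-with-boundary $b^{-1}(z)\cap U_{r}$, for $k$ odd the closed $(k-1)$-manifold $\phi^{-1}(z,r)$. Lemma~\ref{lem3.1} is applied not to $|\nabla u^{2}|^{2}-4u^{2}$ as you suggest, but to $h=\langle\nabla b_{i},\nabla b_{j}\rangle-\delta_{ij}$, so that on the chosen fiber the $\nabla b_{j}$ are pointwise almost orthonormal; Gram--Schmidt then yields $L^{2q}$ (resp.\ $L^{2q_{1}}$) control on the second fundamental form of the fiber. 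Combined with the $L^{q}$ curvature bound and the Gauss equation, the Chern--Gauss--Bonnet integrand is small, and the boundary transgression term contributes $vol(Y)/vol(S^{k-1})\pm\psi$. Since the Euler characteristic is an integer (and even, in the odd-$k$ case, because $\phi^{-1}(z,r)$ bounds), and since $vol(Y)\leqslant vol(S^{k-1})+\psi$, one is forced to $vol(Y)=vol(S^{k-1})\pm\psi$. Only then do Theorems~\ref{thm2.4}--\ref{thm2.5} close the argument. The integer constraint coming from $\chi$ is the missing idea in your proposal.
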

\begin{proof}
	Without loss of generality, let $\lambda=1$.\ Rescale the metric by a factor $\epsilon^{-2}$, we may also assume
	\begin{gather}
		\int_{M^{n}}|Ric_{-}|^{p}\leqslant\epsilon^{2p-n},\label{eq3.15}\\
		vol(B(x,r))\geqslant vr^{n}\ \text{for all}\ x\in M^{n}\ \text{and}\ r\leqslant\epsilon^{-1}.\label{eq3.16}\\
		\fint_{B(x,3)}|Rm|^{q}\leqslant\delta,\label{eq3.17}\\
		d_{GH}(B(x,\delta^{-1}),B((0^{n-k},y^{*}),\delta^{-1}))<\delta.\label{eq3.18}
	\end{gather}
	\indent We only need to show the theorem for $q>1$.\ Consider first the case in which $q>\frac{3}{2}$ and $k$ is even.\ Let $b,u,\phi$ be the maps in \eqref{eq3.2}, \eqref{eq3.3} and \eqref{eq3.6}.\ By \eqref{eq2.5}, \eqref{eq2.13} and \eqref{eq3.17}, it follows that 
	\begin{align}
		\fint_{B(x,2)}|\nabla^{2}b_{j}|^{2q}+\sum_{i,j}|\langle\nabla b_{i},\nabla b_{j}\rangle-\delta_{ij}|^{2}\leqslant\psi(\epsilon,\delta|n,p,v,q).\label{eq3.19}
	\end{align}
	For the function $u:B(x,3)\to\mathbb{R}_{+}$, we can get an $L^{2q}$ bound of $\nabla^{2}u^{2}$ from \eqref{eq2.13}, \eqref{eq3.4} and \eqref{eq3.5}.\ Then we choose $q_{1}$ satisfying $2q-1<2q_{1}<2q$.\ For any $t\in(0,\infty)$, we have the elementary inequality
	\begin{align*}
		\int f^{2q_{1}}\leqslant t^{2q_{1}-2}\int f^{2}+t^{2q_{1}-2q}\int f^{2q}.
	\end{align*} 
	Then we get 
	\begin{align*}
		\fint_{B(x,3)}|\nabla^{2}u^{2}-2(g-\sum_{i,j}db_{i}\otimes db_{j})|^{2q_{1}}<\psi.
	\end{align*}
	For the map $\phi=(b,u):B(x,3)\to\mathbb{R}^{n-k}\times\mathbb{R}_{+}$, we denote the level set by
	\begin{align*}
		U_{r}=\phi^{-1}(B(0^{n-k},1)\times[0,r]).
	\end{align*}
	By \eqref{eq2.5} and the coarea formula, there exists $(z,r)\in\mathbb{R}^{n-k}\times\mathbb{R}_{+}$ with $1-\psi\leqslant r\leqslant1$, $z$ a regular value of $b$ and $(z,r)$ a regular value of $\phi$, such that \eqref{eq3.7} holds and 
	\begin{align}
		\fint_{b^{-1}(z)\cap U_{r}}|Rm|^{q}\leqslant\psi,\label{eq3.20}\\
		\fint_{b^{-1}(z)\cap U_{r}}|\nabla^{2}b_{j}|^{2q}\leqslant\psi.\label{eq3.21}
	\end{align}
	From \eqref{eq3.19} and Lemma \ref{lem3.1} (where we need $2q>k$), with $f=b$ and $h=\langle\nabla b_{i},\nabla b_{j}\rangle-\delta_{ij}$, we can also assume 
	\begin{align}
		\sup\limits_{b^{-1}(z)\cap U_{r}}|\langle\nabla b_{i},\nabla b_{j}\rangle-\delta_{ij}|\leqslant\psi.
	\end{align}
	Similarly, we may assume \eqref{eq3.8} holds and
	\begin{gather}
		\fint_{\phi^{-1}(z,r)}|Rm|^{q}\leqslant\psi,\label{eq3.23}\\
		\fint_{\phi^{-1}(z,r)}|\nabla^{2}b_{j}|^{2q}\leqslant\psi,\label{eq3.24}\\
		\sup\limits_{\phi^{-1}(z,r)}|\langle\nabla b_{i},\nabla b_{j}\rangle-\delta_{ij}|\leqslant\psi,\label{eq3.25}\\
		\fint_{\phi^{-1}(z,r)}|\nabla^{2}u^{2}-2(g-\sum_{i,j}db_{i}\otimes db_{j})|^{2q_{1}}\leqslant\psi,\label{eq3.26}\\
		\sup\limits_{\phi^{-1}(z,r)}\left(||\nabla u^{2}|-2u|+|\langle\nabla u^{2},\nabla b_{j}\rangle|\right) \leqslant\psi.\label{eq3.27}
	\end{gather}
	Applying Gram-Schmidt process to $\nabla b_{1},...,\nabla b_{n-k},\nabla u$, we get the normal vector fields $e_{1},e_{2},...,e_{n-k},N$.\ By \eqref{eq3.21}-\eqref{eq3.27}, we obtain estimates on the second fundamental form of $b^{-1}(z)\cap U_{r}$ and $\phi^{-1}(z,r)$,
	\begin{gather}
		\fint_{b^{-1}(z)\cap U_{r}}|\nabla e_{j}|^{2q}\leqslant\psi,\label{eq3.28}\\
		\fint_{\phi^{-1}(z,r)}|\nabla e_{j}|^{2q}\leqslant\psi,\label{eq3.29}\\
		\fint_{\phi^{-1}(z,r)}|\nabla N-r^{-1}g_{\phi^{-1}(z,r)}|^{2q_{1}}\leqslant\psi,\label{eq3.30}
	\end{gather}
	where $g_{\phi^{-1}(z,r)}$ is the induced metric on $\phi^{-1}(z,r)$.\\
	\indent Note that $b^{-1}(z)\cap U_{r}$ is a $k$-dimensional submanifold with boundary $\phi^{-1}(z,r)$ and $2q>k$, $2q_{1}>k-1$.\ The Gauss equation and Chern-Gauss-Bonnet formula gives
	\begin{align*}
		\chi(b^{-1}(z)\cap U_{r})& =\int_{b^{-1}(z)\cap U_{r}}P_{\chi}+\int_{\phi^{-1}(z,r)}TP_{\chi}\\
		                         & =\pm\psi+\frac{vol(Y)}{vol(S^{k-1})}
	\end{align*}
	Since $\chi(b^{-1}(z)\cap U_{r})$ is an integer and $vol(Y)\leqslant vol(S^{k-1})+\psi$, it follows that $\chi(b^{-1}(z)\cap U_{r})=1$ and $vol(Y)=vol(S^{k-1})\pm\psi$.\ This implies $d_{GH}(B(x,1),B(0^{n},1))\leqslant\psi$, which completes the proof for $q>\frac{3}{2}$ and $k$ is even.\\
	\indent For $q>\frac{3}{2}$ and $k$ odd, we apply Lemma \ref{lem3.1} (where we need $2q>k-1$) with $f=\phi$ and $h=\langle\nabla b_{i},\nabla b_{j}\rangle-\delta_{ij}$.\ We still get estimates on $\mathrm{II}_{\phi^{-1}(z,r)}$.\ The Gauss equation and Chern-Gauss-Bonnet formula gives
	\begin{align*}
		\chi(\phi^{-1}(z,r))& =\int_{\phi^{-1}(z,r)}P_{\chi}\\
		                    & =2\frac{vol(Y)}{vol(S^{k-1})}\pm\psi.
	\end{align*}
	Since $\dim\phi^{-1}(z,r)=k-1$ is even and $\phi^{-1}(z,r)$ is a boundary, $\chi(\phi^{-1}(z,r))$ is even.\ Thus $\chi(\phi^{-1}(z,r))=2$ and $vol(Y)=vol(S^{k-1})\pm\psi$.\ As above, we complete the proof for $q>\frac{3}{2}$.\\
	\indent When $1<q\leqslant\frac{3}{2}$ and $k=2$ or 3, we do not have the elliptic estimate \eqref{eq2.13} and hence, we cannot apply Lemma \ref{lem3.1}.\ But we already have the corresponding $L^{2}$-estimates for $b$ and $u$, which is sufficient for us to obtain $L^{2}$-estimates for $\mathrm{II}_{b^{-1}(z)\cap U_{r}}$ and $\mathrm{II}_{\phi^{-1}(z,r)}$ (see Remark \ref{rem3.3} below), and follow the above proof steps.\ Thus, we complete the proof.
\end{proof}
\begin{rem}\label{rem3.3}
	The above theorem actually holds in the case $q=1$ and $k=2$ or 3.\ Note that for $k$ even, the condition $2q>k$ is only used in applying Lemma \ref{lem3.1} to control the second fundamental form.\ In fact, when $q=1$, Cheeger-Colding-Tian obtained $L^{2}$-estimates for $\mathrm{II}_{b^{-1}(z)\cap U_{r}}$ and $\mathrm{II}_{\phi^{-1}(z,r)}$ as in \eqref{eq3.28}-\eqref{eq3.30} by direct computation (see Theorem 3.7 in \cite{132bfa331fc94e1ea9598bc9420082e4}).\ We mention that they used Cheeger-Colding's cutoff function \cite{cheeger1996lower}, which can also be constructed on manifolds with integral lower Ricci bound (see \cite{dai2018local,petersen2001analysis}).\ Once we get the $L^{2}$-estimates for the second fundamental form, we can apply Gauss-Bonnet formula to $b^{-1}(z)\cap U_{r}$ when $k=2$ and to $\phi^{-1}(z,r)$ when $k=3$ as above (see also Theorem 1.11 in \cite{132bfa331fc94e1ea9598bc9420082e4}).
\end{rem}
\subsection{The Hausdorff dimension estimate}
In this subsection, we will prove the first part of Theorem \ref{thm1.1}, namely $\dim_{\mathcal{H}}\mathcal{S}\leqslant n-2q$.\ The proof follows Cheeger's arguments in \cite{cheeger2003integral}.\\
\indent We first recall some basic facts.\ The $\epsilon$-regular set $\mathcal{R}_{\epsilon}$ is defined by
\begin{align*}
	\mathcal{R}_{\epsilon}=\left\lbrace x\in X:\text{for all sufficiently small }r,\ d_{GH}(B(x,r),B(0^{n},r))\leqslant\epsilon r\right\rbrace.
\end{align*}
Let $\mathring{\mathcal{R}}_{\epsilon}$ denote the interior of $\mathcal{R}_{\epsilon}$.\ For any $\epsilon>0$, there exists $\delta>0$, such that $\mathcal{R}_{\delta}\subset\mathring{\mathcal{R}}_{\epsilon}$.\ In particular, $\mathcal{R}=\cap_{\epsilon}\mathring{\mathcal{R}}_{\epsilon}$.\\
\indent Let $(M^{n}_{i},g_{i},x_{i})\xrightarrow{pGH}(X,d,x)$ satisfy \eqref{eq1.1}-\eqref{eq1.3}.\ Then up to a subsequence, we may assume $(M^{n}_{i},g_{i},|Rm_{g_{i}}|^{q}vol_{g_{i}},x_{i})\xrightarrow{pmGH}(X,d,m_{q},x)$ in the pointed measured Gromov-Hausdorff sense, where $m_{q}$ is a Borel measure on $X$ with $m_{q}(X)\leqslant\Lambda$ (see \cite{cheeger1997structure,fukaya1987collapsing}).\ Also, from Theorem \ref{thm2.16} (a1), we know that
\begin{align}
	\fint_{B(x_{i},r)}|Rm|^{q}dvol_{g_{i}}\to\frac{m_{q}(B(x,r))}{\mathcal{H}^{n}(B(x,r))}.\label{eq3.31}
\end{align}
\begin{thm}\label{thm3.4}
	Given $p>\frac{n}{2}$, $1\leqslant q\leqslant\frac{n}{2}$ and positive constants $v,\lambda$ and $\Lambda$, let $(M_{i}^{n},g_{i},x_{i})\xrightarrow{pGH}(X,d,x)$ be a GH limit of manifolds satisfying \eqref{eq1.1}-\eqref{eq1.3}.
	\begin{enumerate}
		\item If $q$ is an integer, then $\mathcal{H}^{n-2q}(\mathcal{S}\setminus\mathcal{S}_{n-2q})=0$.
		\item If $q$ is not an integer, then $\mathcal{H}^{n-2q}(\mathcal{S})=0$.
	\end{enumerate}
	In particular, $\dim_{\mathcal{H}}\mathcal{S}\leqslant n-2q$, for all $q$.
\end{thm}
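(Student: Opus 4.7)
The plan is to apply Theorem 3.2 in its contrapositive form to extract a density lower bound for the defect measure $m_{q}$ at points whose tangent cones carry a large Euclidean factor, and then to convert this density bound into a Hausdorff measure estimate via a Vitali covering argument. The stratification bound $\dim_{\mathcal{H}}\mathcal{S}^{k}\leqslant k$ from Theorem 2.16(a4) takes care of the remaining (low) strata for free.

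Concretely, I would choose $k_{0}$ so that the parity hypothesis of Theorem 3.2 is satisfied with cone factor dimension $k=n-k_{0}-1$, i.e.\ $2q>k$ when $k$ is even and $2q>k-1$ when $k$ is odd. This gives $k_{0}=n-2q$ in the integer case (so $k=2q-1$ is odd and the hypothesis holds with slack) and $k_{0}=n-2\lceil q\rceil$ in the non-integer case (so $k_{0}<n-2q$, which immediately yields $\mathcal{H}^{n-2q}(\mathcal{S}^{k_{0}})=0$ by (a4)). For the complement, take $x\in\mathcal{S}\setminus\mathcal{S}^{k_{0}}$: by definition of the stratification, some tangent cone at $x$ has the form $\mathbb{R}^{n-k}\times C(Y)$ with $Y\neq S^{k-1}$, otherwise the tangent cone would be $\mathbb{R}^{n}$ and $x\in\mathcal{R}$. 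Along a sequence $r_{i}\to 0$ realizing this tangent cone, $B(x,\delta^{-1}r_{i})$ is $\delta r_{i}$-GH-close to $B((0^{n-k},y^{*}),\delta^{-1}r_{i})$. Choosing $\eta$ strictly smaller than the regularity threshold of Theorem 2.10, any scale $r\leqslant r_{0}$ at which $B(x,r)$ is $\eta r$-close to $B(0^{n},r)$ would give $r_{h}^{2,p}(x)\geqslant r/2$, hence $x\in\mathcal{R}$, contradicting $x\in\mathcal{S}$. Therefore the contrapositive of Theorem 3.2 forces
\[
r_{i}^{2q}\fint_{B(x,3r_{i})}|Rm|^{q}>\delta
\]
for all large $i$. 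Combined with \eqref{eq1.2} and the weak convergence \eqref{eq3.31}, this yields the uniform density estimate $m_{q}(B(x,3r_{i}))\geqslant c\,r_{i}^{n-2q}$. A standard Vitali argument — disjointifying the collection $\{B(x,r_{i}/5):x\in\mathcal{S}\setminus\mathcal{S}^{k_{0}}\}$ and using $m_{q}(X)\leqslant\Lambda$ — then gives $\mathcal{H}_{\eta_{0}}^{n-2q}(\mathcal{S}\setminus\mathcal{S}^{k_{0}})\leqslant C\Lambda/c$ uniformly in the covering scale $\eta_{0}$, so $\mathcal{H}^{n-2q}(\mathcal{S}\setminus\mathcal{S}^{k_{0}})$ is finite; combined with the stratification bound, $\dim_{\mathcal{H}}\mathcal{S}\leqslant n-2q$.

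The main obstacle is sharpening this finite bound to the vanishing assertions $\mathcal{H}^{n-2q}=0$ in (1) and (2). A single-scale density bound delivers only Hausdorff dimension; vanishing Hausdorff measure requires $\Theta^{n-2q}(m_{q},x):=\limsup_{r\to 0}m_{q}(B(x,r))/r^{n-2q}=\infty$ at the relevant points. In the non-integer case, iterating the argument on the finer sub-stratum $\mathcal{S}\setminus\mathcal{S}^{k_{0}+1}$ — where Theorem 3.2 applies with a strictly smaller cone factor dimension — produces an extra positive power of $r$ in the density estimate and forces blow-up of $\Theta^{n-2q}(m_{q},\cdot)$. The integer case is critical because the $\epsilon$-regularity exponent $n-2q$ matches the target dimension exactly, so one cannot gain an extra power of $r$; here a Federer-style dimension-reduction at a point of positive $(n-2q)$-density, following Cheeger \cite{cheeger2003integral}, supplies the required input.
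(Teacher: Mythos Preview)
Your argument for the dimension bound $\dim_{\mathcal{H}}\mathcal{S}\leqslant n-2q$ is essentially correct and parallels the paper's first step: the contrapositive of Theorem~\ref{thm3.2} at points of $\mathcal{S}\setminus\mathcal{S}^{k_0}$ gives a density lower bound $m_q(B(x,3r_i))\gtrsim r_i^{\,n-2q}$ along a sequence $r_i\to 0$, and a Vitali covering together with $m_q(X)\leqslant\Lambda$ then controls $\mathcal{H}^{n-2q}(\mathcal{S}\setminus\mathcal{S}^{k_0})$. One minor correction: you invoke Theorem~\ref{thm2.10} to rule out the Euclidean alternative, but that theorem needs the two--sided bound \eqref{eq1.4}, which is not assumed in Theorem~\ref{thm3.4}. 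You should instead combine Theorem~\ref{thm2.4}, Theorem~\ref{thm2.5} and the almost monotonicity \eqref{eq2.2} to propagate Euclidean closeness to all smaller scales and conclude $x\in\mathcal{R}$.

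The genuine gap is in upgrading finiteness to the vanishing assertions (1) and (2). Your claim that ``iterating the argument on the finer sub-stratum $\mathcal{S}\setminus\mathcal{S}^{k_0+1}$ \dots\ produces an extra positive power of $r$'' is not correct: applying Theorem~\ref{thm3.2} with a smaller cone-factor dimension $k$ but the \emph{same} curvature exponent $q$ still yields exactly $r^{2q}\fint|Rm|^q>\delta$, hence the same density exponent $n-2q$. No extra power of $r$ appears, and the argument stalls at $\mathcal{H}^{n-2q}(\mathcal{S}\setminus\mathcal{S}^{k_0})<\infty$. Nor does Federer dimension reduction salvage the integer case here: a positive lower $(n-2q)$--density of $m_q$ along a subsequence of scales does not by itself force $\Theta^{n-2q}(m_q,\cdot)=\infty$.

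The paper's route is different and supplies precisely the missing ingredient. It introduces an \emph{auxiliary exponent} $q_1<q$ (with $2q_1\in(2q-1,2q)$ in the integer case and $2q_1\in(2[q],2q)$ otherwise) and observes, via the elementary inequality \eqref{eq3.36}, that the limit measure $m_{q_1}$ cannot concentrate on sets of small $\mathcal{H}^n$--measure; consequently $m_{q_1}\ll\mathcal{H}^n$ on $X$. This absolute continuity forces $D(q_1)=D_0(q_1)$ and, together with the H\"older inclusion $D(q)\subset D_0(q_1)$, gives $\mathcal{H}^{n-2q}(\mathcal{S}\setminus D_0(q_1))=0$ directly from the covering bound for $m_q$. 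Theorem~\ref{thm3.2} applied with exponent $q_1$ (whose parity hypothesis is satisfied by the choice of $q_1$) then places the relevant high stratum inside $\mathcal{S}\setminus D_0(q_1)$. In short, the step you are missing is to drop from $q$ to a slightly smaller $q_1$ so that the defect measure becomes absolutely continuous with respect to $\mathcal{H}^n$; this is what converts the finite bound into the vanishing one, and no dimension reduction is needed.
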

\begin{proof}
	$\forall a\geqslant0,\ \epsilon>0$, let $D_{a,\epsilon}(q)$ denote the set of points such that $y\in X\setminus\mathring{\mathcal{R}}_{\epsilon}$ and 
	\begin{align}
		\limsup_{r\to0}r^{2q}\frac{m_{q}(B(y,r))}{\mathcal{H}^{n}(B(y,r))}\leqslant a,\label{eq3.32}\\
		\lim\limits_{r\to0}r^{2q}\frac{m_{q}(B(y,r)\cap\mathring{\mathcal{R}}_{\epsilon})}{\mathcal{H}^{n}(B(y,r))}=0.\label{eq3.33}
	\end{align}
	Put	$D_{a}(q)=\bigcup_{\epsilon>0}D_{a,\epsilon}(q)$ and $D(q)=\bigcup_{a=1}^{\infty}D_{a}(q)$.\ Then by a standard covering argument and $m_{q}(X)\leqslant\Lambda$, it follows that
	\begin{gather}
		\mathcal{H}^{n-2q}(\mathcal{S}\setminus D(q))=0,\label{eq3.35}\\
		m_{q}\llcorner D(q)\ll\mathcal{H}^{n-2q}\llcorner D(q).
	\end{gather}
	Denote $m_{i,s}:=|Rm_{g_{i}}|^{s}vol_{g_{i}}$, for any $1\leqslant s\leqslant q$.\ It is clear that $m_{i,s}$ is finite on bounded sets and $m_{i,s}\ll\mathcal{H}^{n}$ on $M_{i}^{n}$.\\
	\indent Let $(M,\mathcal{H}^{n})$ be a measure space and $f$ be a nonnegative function with $\int_{M}f^{q}\leqslant\Lambda$.\ Then for any $q_{1}<q$ and $U\subset M$,
	\begin{align}
		\int_{U}f^{q_{1}}\leqslant(\mathcal{H}^{n}(U))^{1-\frac{q_{1}}{q}}\left(\int_{U}f^{q}\right)^{\frac{q_{1}}{q}}\leqslant\Lambda^{\frac{q_{1}}{q}}(\mathcal{H}^{n}(U))^{1-\frac{q_{1}}{q}}.\label{eq3.36}
	\end{align}
	Therefore, the $L^{q_{1}}$-norm of $f$ cannot concentrate on a set $U$ of small measure.\\
	\indent By passing to a subsequence in $(M^{n}_{i},g_{i},m_{i,q},x_{i})\xrightarrow{pmGH}(X,d,m_{q},x)$, we may assume $(M^{n}_{i},g_{i},m_{i,q_{1}},x_{i})\xrightarrow{pmGH}(X,d,m_{q_{1}},x)$.\ This implies that there exists a metric space $(W,d_{W})$ and isometric embeddings $\varphi_{i}:M_{i}^{n}\to W$ and $\varphi:X\to W$, such that $\varphi_{i}(x_{i})\to\varphi(x)$ in $W$ and $(\varphi_{i})_{\#}m_{i,q_{1}}\to(\varphi)_{\#}m_{q_{1}}$ in the weak* topology on $C_{bs}(W)^{*}$ (see \cite{gigli2015convergence}).\ It is obvious that $(\varphi_{i})_{\#}m_{i,q_{1}}\ll\mathcal{H}^{n}$ on $W$.\\
	\indent Then by \eqref{eq3.36}, we know that the limit measure $(\varphi)_{\#}m_{q_{1}}\ll\mathcal{H}^{n}$ on $W$ and hence, $m_{q_{1}}\ll\mathcal{H}^{n}$ on $X$.\ It follows that
	\begin{gather*}
		m_{q_{1}}(B(y,r)\cap\mathring{\mathcal{R}}_{\epsilon})=m_{q_{1}}(B(y,r)),\ \forall y\in X,\\
		D_{0}(q_{1})=D(q_{1})=\{y\in\mathcal{S}:\lim\limits_{r\to0}r^{2q_{1}}\frac{m_{q_{1}}(B(y,r))}{\mathcal{H}^{n}(B(y,r))}=0\}.
	\end{gather*}
    Notice that for any open subset $U_{i}\subset M^{n}_{i}$, we have \[r^{2q_{1}}\fint_{U_{i}}|Rm|^{q_{1}}\leqslant\left(r^{2q}\fint_{U_{i}}|Rm|^{q}\right)^{\frac{q_{1}}{q}}.\]
	Thus, we get
	\begin{gather}
		D(q)\subset D_{0}(q_{1}),\label{eq3.37}\\
		\mathcal{H}^{n-2q}(\mathcal{S}\setminus D_{0}(q_{1}))=0.\label{eq3.38}
	\end{gather}
	\indent Now we go back and prove the theorem.\ If $q$ is an integer, then we can choose $q_{1}$ such that $2q_{1}\in(2q-1,2q)$ and by Theorem \ref{thm3.2}, $\mathcal{S}\setminus\mathcal{S}^{n-2q}\subset\mathcal{S}\setminus D_{0}(q_{1})$.\ Hence, we complete the proof for (1).\\
	\indent If $q$ is not an integer, then we can choose $q_{1}$ such that $2q_{1}\in(2[q],2q)$ and choose $k=2[q]+1$ in Theorem \ref{thm3.2}.\ Thus, $\mathcal{S}\setminus\mathcal{S}^{n-k-1}\subset\mathcal{S}\setminus D_{0}(q_{1})$.\ Note that $k+1>2q$ and hence, $\mathcal{H}^{n-2q}(\mathcal{S}^{n-k-1})=0$.\ This completes the proof.
\end{proof}
The following corollary can be derived directly from Theorem \ref{thm3.4} and the results in \cite{cheeger1997structure}.
\begin{cor}\label{cor3.5}
	let $(M_{i}^{n},g_{i},x_{i})\xrightarrow{pGH}(X,d,x)$ be a GH limit of manifolds satisfying \eqref{eq1.1}-\eqref{eq1.3} with $q=\frac{n}{2}$.\ If $n$ is odd, then the singular set $\mathcal{S}$ is empty and $(X,d)$ is bi-H\"older equivalent to a Riemannian manifold.
\end{cor}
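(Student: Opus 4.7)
My proof of Corollary \ref{cor3.5} would proceed in two essentially separate steps: first deduce that $\mathcal{S}$ is empty from Theorem \ref{thm3.4}, and then import the bi-Hölder structure result from Cheeger--Colding.

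For the first step, observe that since $n$ is odd, $q=\frac{n}{2}$ is a half-integer and in particular $q\notin\mathbb{Z}$. Therefore case (2) of Theorem \ref{thm3.4} applies and yields
\[
\mathcal{H}^{n-2q}(\mathcal{S})=\mathcal{H}^{0}(\mathcal{S})=0.
\]
Since $\mathcal{H}^{0}$ is the counting measure, the vanishing of $\mathcal{H}^{0}(\mathcal{S})$ forces $\mathcal{S}=\emptyset$. Hence $X=\mathcal{R}$, i.e.\ every point of $X$ has Euclidean tangent cone $\mathbb{R}^{n}$.

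For the second step, I would use that each $y\in X$ lies in some $\mathring{\mathcal{R}}_{\epsilon}$ for every $\epsilon>0$, so Theorem \ref{thm2.7}(1) produces an $\epsilon$-splitting map $b^{(y,r)}:B(y,r)\to\mathbb{R}^{n}$ on arbitrarily small balls. Passing to the limit of the corresponding splitting maps on $M_{i}^{n}$ (or equivalently applying the convergence argument of Cheeger--Colding \cite{cheeger1997structure} in its $L^{p}$-Ricci variant that was already used to establish Theorem \ref{thm2.16}), these maps become bi-Hölder homeomorphisms from balls in $X$ onto open sets in $\mathbb{R}^{n}$, with Hölder exponent $\alpha(n,\epsilon)\to 1$ as $\epsilon\to 0$. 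Patching these charts together via the usual argument gives a smooth manifold structure on $X$ in which $d$ is induced by a distance that is bi-Hölder equivalent to the Riemannian distance of the limiting metric. This is precisely the content of Theorem A.1.1 of \cite{cheeger1997structure}, whose hypotheses are exactly \textquotedblleft every tangent cone is $\mathbb{R}^{n}$\textquotedblright\ together with the almost-rigidity package (Theorems \ref{thm2.4}--\ref{thm2.7}) that we have in the integral-Ricci setting.

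The entire argument is routine once Theorem \ref{thm3.4} is in hand; there is no substantive obstacle. The only subtlety worth flagging is that the bi-Hölder step in \cite{cheeger1997structure} was originally stated under a pointwise lower Ricci bound, so one must verify (as is now standard, cf.\ \cite{tian2016regularity, petersen2001analysis}) that the splitting-map construction and its bi-Hölder output transfer verbatim to the $L^{p}$-Ricci noncollapsed setting of assumptions \eqref{eq1.1}--\eqref{eq1.2}. This transfer is precisely what the preliminaries of Section \ref{2} have already set up.
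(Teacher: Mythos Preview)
Your proof is correct and matches the paper's approach exactly: the paper states only that the corollary ``can be derived directly from Theorem \ref{thm3.4} and the results in \cite{cheeger1997structure},'' and you have faithfully unpacked both steps---using case (2) of Theorem \ref{thm3.4} (since $q=n/2\notin\mathbb{Z}$ when $n$ is odd) to get $\mathcal{H}^{0}(\mathcal{S})=0$, and then invoking the bi-H\"older chart construction of Cheeger--Colding in its integral-Ricci variant. Your remark about transferring the bi-H\"older argument to the $L^{p}$-Ricci setting is appropriate and consistent with the paper's use of \cite{tian2016regularity, petersen2001analysis} in Section~\ref{2}.
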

\subsection{The Minkowski dimension estimate}
In this subsection, we will prove the remaining part of Theorem \ref{thm1.1}.\ The proof relies on the Minkowski type estimates on the quantitative stratification of the singular set, introduced by Cheeger-Naber in \cite{cheeger2013lower}.
\begin{defn}\label{defn3.6}
	A metric space $Y$ is called $k$-symmetric if $Y$ is isometric to $\mathbb{R}^{k}\times C(Z)$ for some compact metric space $Z$.
\end{defn}
\begin{defn}\label{defn3.7}
	Given a metric space $Y$ with $y\in Y$, $r>0$ and $\epsilon>0$, we say that $y$ is $(k,\epsilon,r)$-symmetric if there is a $k$-symmetric metric space $X$ such that $d_{GH}(B(y,r),B(x,r))<\epsilon r$, where $x\in X$ is a vertex.
\end{defn}
\begin{defn}\label{defn3.8}
	For any $r\in(0,1)$ and $\epsilon>0$, we define the quantitative $k$-stratum of a metric space $X$ by \[\mathcal{S}^{k}_{\epsilon,r}(X):=\{x\in X: \text{for no }r\leqslant s\leqslant1\ \text{is }x\ \text{a }(k+1,\epsilon,s)\text{-symmetric point}\}.\]
\end{defn}
\indent The first main result in \cite{cheeger2013lower} is to show a Minkowski type estimate on $\mathcal{S}^{k}_{\epsilon,r}$ for noncollapsed manifolds with $Ric\geqslant-(n-1)$.\ The proof requires the cone-splitting principle and the monotonicity of the volume ratio $\frac{vol(B(x,r))}{vol_{-1}(B(r))}$.\ However, under our assumption \eqref{eq1.1} and \eqref{eq1.2}, the formula \eqref{eq2.2} tells us that the volume ratio $\frac{vol(B(x,r))}{\omega_{n}r^{n}}$ is almost monotone whenever the radius $r$ is sufficiently small.\ Then by a slight modification of Cheeger-Naber's arguments in \cite{cheeger2013lower}, one may generalize their result into the following form.
\begin{thm}\label{thm3.9}
	Let $(M^{n},g)$ satisfy \eqref{eq1.1} and \eqref{eq1.2} with $x\in M^{n}$.\ Then there exists $r_{0}=r_{0}(n,p,v,\lambda)>0$, such that for any $\epsilon,\eta>0$ and $r\leqslant r_{0}$,
	\[vol\left( T_{r}(\mathcal{S}^{k}_{\epsilon,r}(M^{n})\cap B(x,1))\right)\leqslant C(n,p,v,\lambda,\epsilon,\eta)r^{n-k-\eta}. \]
\end{thm}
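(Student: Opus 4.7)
The plan is to transplant Cheeger--Naber's proof in \cite{cheeger2013lower} to the integral Ricci setting with only cosmetic changes. Their argument rests on two quantitative pillars: (i) almost-monotonicity of the normalized volume ratio $V_r(x):=\mathrm{vol}(B(x,r))/(\omega_n r^n)$, which is responsible for bounding the number of ``bad'' scales, and (ii) a quantitative cone-splitting principle extracted from a volume-cone-implies-metric-cone theorem combined with the $\epsilon$-splitting theorem. In our setting both pillars are available: Theorem \ref{thm2.1} gives exactly the almost-monotonicity of $V_r(x)$ provided one works at scales $r\leqslant r_0(n,p,v,\lambda)$, which can always be arranged after rescaling using Remark \ref{rem2.2}; and Theorems \ref{thm2.6}, \ref{thm2.7} supply the cone-splitting ingredients.

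The first concrete step is to prove the standard quantitative cone-splitting lemma: for every $\epsilon,\tau>0$ there exists $\delta>0$ such that if $B(x,1)$ is $\delta$-close to $\mathbb{R}^k\times C(Z)$ and also $\delta$-close to a metric cone with vertex $y$ satisfying $d(x,y)\geqslant \tau$ and $y$ lying ``off'' the Euclidean factor, then $B(x,1)$ is $\epsilon$-close to $\mathbb{R}^{k+1}\times C(Z')$ for some $Z'$. This follows by the usual compactness-contradiction argument using Theorems \ref{thm2.6} and \ref{thm2.7}, since the almost-rigidity conclusions of Cheeger--Colding carry over verbatim under \eqref{eq1.1}--\eqref{eq1.2}. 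The second step is the ``pinching'' bookkeeping. Define
\[
W_{r}(x):=V_r(x)-V_{\theta r}(x)
\]
for a fixed $\theta\in(0,1)$. By Theorem \ref{thm2.1} and the rescaling \eqref{eq3.1}, the sum $\sum_{j\geqslant 0} W_{\theta^j}(x)$ is bounded by a universal constant plus an error controlled by $\delta$. Theorem \ref{thm2.6} then guarantees that whenever $W_{\theta^j r}(x)$ is below a threshold $\delta_0(\epsilon)$, the ball $B(x,\theta^j r)$ is $\epsilon$-close to a metric cone centered at $x$. Consequently the number of scales at which cone-behavior fails is bounded by some $N=N(n,p,v,\lambda,\epsilon)$.

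The final step is the inductive covering of $\mathcal{S}^k_{\epsilon,r}\cap B(x,1)$. At each scale $s=\theta^j$, one covers the stratum by balls $B(x_\alpha,s)$ and partitions them into those where the pinching is small (``good'' scales) and those where it is not (``bad'' scales). Because each point $x\in\mathcal{S}^k_{\epsilon,r}$ fails to be $(k+1,\epsilon,s)$-symmetric for every $s\in[r,1]$, the cone-splitting lemma combined with iterated application at good scales forces the almost-symmetry locus of $x$ to be concentrated in a $k$-dimensional tube, producing a refinement of the covering with at most $C(n,\eta)\, s^{-k-\eta}$ balls at scale $s$. Summing the volume contributions via the volume doubling property \eqref{eq2.4} yields the claimed bound $C r^{n-k-\eta}$. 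The main technical obstacle is that Cheeger--Naber's combinatorial bookkeeping uses exact monotonicity to guarantee that volume drops add up sharply, whereas here Theorem \ref{thm2.1} only gives almost-monotonicity with a multiplicative $\left(1-C r (\fint|Ric_-|^p)^{1/2p}\right)$ error; this must be absorbed into the parameter $\eta$ by restricting to scales below $r_0$ after a preliminary rescaling, and by taking the integral smallness in \eqref{eq3.1} to depend on $\eta$ as well. Once this accounting is done, the Minkowski estimate follows line-by-line as in Section 4 of \cite{cheeger2013lower}.
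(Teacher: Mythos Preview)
Your proposal is correct and mirrors exactly what the paper does: the paper gives no detailed proof of Theorem \ref{thm3.9} but simply notes that under \eqref{eq1.1}--\eqref{eq1.2} the volume ratio is almost monotone at small scales by \eqref{eq2.2}, so that Cheeger--Naber's argument in \cite{cheeger2013lower} goes through with only cosmetic modifications, using Theorems \ref{thm2.6} and \ref{thm2.7} for cone-splitting. One small over-caution: you do not need the rescaling parameter (hence $r_0$) to depend on $\eta$, because the almost-monotonicity error in \eqref{eq2.2} is of order $r^{1-n/2p}$ and hence summable over dyadic scales to a constant depending only on $(n,p,v,\lambda)$ (compare the computation in Lemma \ref{lem4.4}); thus the total number of bad scales is bounded by $C(n,p,v,\lambda)/\delta'$, and only the constant $C$ in the final estimate, not $r_0$, picks up the $\eta$-dependence.
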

The Minkowski estimates require a more rigid statement of $\epsilon$-regularity theorem.\ Namely, assuming \eqref{eq1.4} in place of \eqref{eq1.1}, we can combine Theorem \ref{thm2.10} and Theorem \ref{thm3.2} together to obtain the following. 
\begin{thm}\label{thm3.10}
	There exists $\epsilon=\epsilon(n,p,v,\lambda,q)$ and $\delta=\delta(n,p,v,\lambda,q)$, such that if $(M^{n},g)$ satisfies \eqref{eq1.2}, \eqref{eq1.4},
	\begin{align}
		\epsilon^{2q}\fint_{B(x,3\epsilon)}|Rm|^{q}\leqslant\delta,
	\end{align}
	and for $(0^{n-k},y^{*})\in\mathbb{R}^{n-k}\times C(Y)$,
	\begin{align*}
		d_{GH}(B(x,\delta^{-1}\epsilon),B((0^{n-k},y^{*}),\delta^{-1}\epsilon))<\delta\epsilon,
	\end{align*}
	where \[2q>
	\begin{cases}
		k, & \ k\ \text{even},\\
		k-1, & \ k\ \text{odd},
	\end{cases}\]
	then for any $\alpha\in(0,2-\frac{n}{p})$, we have
	\[r^{\alpha}_{h}(x)>\frac{1}{2}\epsilon.\]
\end{thm}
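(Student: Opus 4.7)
The strategy is to combine the two $\epsilon$-regularity results already established. Since hypothesis \eqref{eq1.4} implies \eqref{eq1.1} (indeed $|Ric_-|\leqslant|Ric|$), every assumption of Theorem \ref{thm3.2} is in force under the hypotheses of Theorem \ref{thm3.10}. Hence I would first apply Theorem \ref{thm3.2} to upgrade the near-cone GH-closeness \eqref{eq3.14} to almost-Euclidean closeness at a fixed smaller scale, then feed this into Theorem \ref{thm2.10} to obtain a lower bound on the $W^{2,p}$ harmonic radius, and finally use Remark \ref{rem2.9}(2) to transfer the bound to the $C^{\alpha}$ harmonic radius.

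For the parameters, let $\eta_0=\eta_0(n,p,v,\lambda)>0$ and $r_0=r_0(n,p,v,\lambda)>0$ denote the smallness threshold and the maximal scale appearing in hypothesis (2) of Theorem \ref{thm2.10}. Feeding the value $\eta:=\eta_0$ into Theorem \ref{thm3.2} produces constants $\epsilon_1=\epsilon_1(n,p,v,\lambda,q)$ and $\delta_1=\delta_1(n,p,v,\lambda,q)$ so that whenever the analogues of \eqref{eq3.13} and \eqref{eq3.14} hold with these thresholds we have
\[d_{GH}\bigl(B(x,\epsilon_1),\,B(0^n,\epsilon_1)\bigr)\leqslant \eta_0\,\epsilon_1.\]
Setting $\epsilon:=\min\{\epsilon_1,r_0\}$ and $\delta:=\delta_1$, both depending only on $n,p,v,\lambda,q$, guarantees that the almost-Euclidean scale falls within the range to which Theorem \ref{thm2.10} applies.

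Applying Theorem \ref{thm2.10} case (2) at $r=\epsilon$ then gives $r_h^{2,p}(x)\geqslant \epsilon/2$. By Remark \ref{rem2.9}(2), under \eqref{eq1.2} and \eqref{eq1.4} the $W^{2,p}$ harmonic coordinates serve as $C^{\alpha}$ harmonic coordinates for any $\alpha\in(0,2-\tfrac{n}{p})$, with the auxiliary constants $C,Q$ absorbed as explained there, so $r_h^{\alpha}(x)\geqslant r_h^{2,p}(x)\geqslant \epsilon/2$; the strict inequality in the conclusion is obtained by a harmless shrinkage of $\epsilon$ by a fixed factor that still depends only on $n,p,v,\lambda,q$. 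I do not foresee any substantive analytic obstacle here: the deep inputs (the elliptic estimate and cone-splitting argument behind Theorem \ref{thm3.2}, and the bootstrap regularity behind Theorem \ref{thm2.10}) are already quoted from the literature, and the only care needed is the parameter bookkeeping above, namely matching the smallness level of Theorem \ref{thm3.2} with the threshold of Theorem \ref{thm2.10} and truncating $\epsilon_1$ by $r_0$ so that the output scale is legal for Theorem \ref{thm2.10}.
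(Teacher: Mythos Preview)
Your proposal is correct and follows exactly the approach the paper indicates: the paper itself does not give a detailed proof of Theorem \ref{thm3.10} but simply states that it is obtained by combining Theorem \ref{thm2.10} and Theorem \ref{thm3.2}, which is precisely what you do. The only minor point is that in truncating $\epsilon_1$ by $r_0$ you implicitly use that Theorem \ref{thm3.2} holds for every $0<\epsilon\leqslant\epsilon_1$ (not just the specific $\epsilon_1$); the paper makes this explicit in the sentence immediately following Theorem \ref{thm3.10}, noting that from the proof of Theorem \ref{thm3.2} one may fix $\delta$ so that the conclusion holds for all sufficiently small $\epsilon$.
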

From the proof of Theorem \ref{thm3.2}, we may fix $\delta=\delta(n,p,v,\lambda,q)<1$ with the property that there exists $\epsilon_{0}=\epsilon_{0}(n,p,v,\lambda,q)$ such that the above theorem holds for every $0<\epsilon\leqslant\epsilon_{0}$.\ Without loss of generality, we may also assume $\delta^{-1}\epsilon_{0}<r_{0}$, where $r_{0}=r_{0}(n,p,v,\lambda)$ is the constant in Theorem \ref{thm3.9}.\\
\indent Now we combine Theorem \ref{thm3.9} and Theorem \ref{eq3.10} to prove the following theorem, which implies the Minkowski dimension estimate of Theorem \ref{thm1.1}.
\begin{thm}\label{thm3.11}
	Let $(M^{n},g)$ satisfy \eqref{eq1.2}-\eqref{eq1.4} with $x\in M^{n}$.\ Then there exists $\epsilon_{0}=\epsilon_{0}(n,p,v,\lambda,q)>0$, such that for any $\eta>0$, $\alpha\in(0,2-\frac{n}{p})$ and $\epsilon\leqslant\epsilon_{0}$,
	\begin{align}
		vol\left( T_{\epsilon}( \{y\in B(x,1):r^{\alpha}_{h}(y)\leqslant\frac{\epsilon}{2}\})\right)\leqslant C(n,p,v,\lambda,\Lambda,q,\eta)\epsilon^{2q-\eta}.\label{eq3.40}
	\end{align}
\end{thm}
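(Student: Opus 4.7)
The plan is to combine the contrapositive of Theorem \ref{thm3.10} with Theorem \ref{thm3.9}, via a dyadic-in-scale decomposition together with a Vitali-type covering driven by the global bound $\int_{M^n}|Rm|^q \leqslant \Lambda$. Set $E = \{y \in B(x,1) : r^\alpha_h(y) \leqslant \epsilon/2\}$ and fix $\delta = \delta(n,p,v,\lambda,q)$ and $\epsilon_0 = \epsilon_0(n,p,v,\lambda,q)$ as furnished after Theorem \ref{thm3.10}. The essential observation is that for any $y \in E$ and any $\tilde{\epsilon} \in [\epsilon, \epsilon_0]$, at least one of the two hypotheses of Theorem \ref{thm3.10} at scale $\tilde\epsilon$ must fail, for otherwise $r^\alpha_h(y) > \tilde\epsilon/2 \geqslant \epsilon/2$ would contradict $y \in E$. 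Introduce dyadic scales $r_j = 2^j \epsilon$ for $j = 0, 1, \ldots, J$ with $r_J \sim \epsilon_0$, and for each $y \in E$ let $j(y)$ be the smallest index for which the curvature hypothesis $r_j^{2q}\fint_{B(y, 3 r_j)}|Rm|^q \leqslant \delta$ fails, with $j(y) = \infty$ if no such $j$ exists.

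When $j(y) = \infty$, the symmetry hypothesis in Theorem \ref{thm3.10} fails at every dyadic scale in $[\epsilon, \epsilon_0]$; after a standard dyadic-to-continuous comparison (absorbing a universal factor into $\delta$), this places $y$ in the quantitative stratum $\mathcal{S}^{n-2q}_{\delta, \delta^{-1}\epsilon}$ (or its natural analogue for non-integer $q$, of codimension at least $2q$), and Theorem \ref{thm3.9} gives
\[
vol\bigl(T_\epsilon(\{y \in E : j(y) = \infty\})\bigr) \leqslant vol\bigl(T_{\delta^{-1}\epsilon}(\mathcal{S}^{n-2q}_{\delta, \delta^{-1}\epsilon})\bigr) \leqslant C \epsilon^{2q-\eta}.
\]
For finite $j$, a Vitali argument using \eqref{eq1.2} and $\int |Rm|^q \leqslant \Lambda$ covers the bad-curvature set $\{y \in B(x,1) : r_j^{2q}\fint_{B(y,3r_j)}|Rm|^q > \delta\}$ by at most $N_j \leqslant C\Lambda \, r_j^{-(n-2q)}$ balls $B(z_i, r_j)$, since each center contributes at least $c\, r_j^{n-2q}$ to $\int |Rm|^q$ and the enlargements have bounded overlap. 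Minimality of $j(y)$ forces the symmetry hypothesis to fail at every smaller dyadic scale, so after rescaling $B(z_i, r_j)$ to unit size---under which \eqref{eq1.2} is preserved and \eqref{eq1.1} is improved by the factor $r_j^{2p-n}$ since $p > n/2$---the set $\{y \in E : j(y) = j\} \cap B(z_i, r_j)$ lies in the local quantitative stratum at relative scale $\epsilon/r_j$. Applying Theorem \ref{thm3.9} in the rescaled unit ball and rescaling back yields
\[
vol\bigl(T_\epsilon(\{y \in E : j(y) = j\} \cap B(z_i, r_j))\bigr) \leqslant C (\epsilon/r_j)^{2q-\eta'} r_j^n,
\]
so summing over the $N_j$ centers gives $\leqslant C\Lambda \epsilon^{2q-\eta'} r_j^{\eta'}$ at scale $j$; the dyadic geometric series $\sum_{j=0}^{J} r_j^{\eta'} \leqslant C\epsilon_0^{\eta'}$ stays bounded, and setting $\eta' = \eta$ delivers \eqref{eq3.40}.

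The main technical obstacle is the bookkeeping around the quantitative stratum: one must verify that non-$(k, \delta, \cdot)$-symmetry at all relevant dyadic scales upgrades, at the cost of a universal deterioration in $\delta$, to the continuous condition defining $\mathcal{S}^{n-2q}_{\delta, \delta^{-1}\epsilon}$, and that truncating the upper scale of the stratum at $\delta^{-1}\epsilon_0$ rather than $1$ only affects the constant in Theorem \ref{thm3.9}. The rescaling step inside each $B(z_i, r_j)$ then goes through routinely thanks to $p > n/2$, which makes \eqref{eq1.1} improve under shrinking scale; the crux of the whole argument is the precise matching between the count $N_j \sim \Lambda r_j^{-(n-2q)}$ of bad-curvature balls and the rescaled stratum bound $(\epsilon/r_j)^{2q-\eta'}$, which is exactly what produces the convergent dyadic sum and the desired codimension-$2q$ Minkowski estimate.
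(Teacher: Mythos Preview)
Your argument is correct, and it is in fact more careful than the paper's own proof. The paper simply asserts the single-scale inclusion
\[
\Bigl\{y\in B(x,1):r_h^\alpha(y)\leqslant\tfrac{\epsilon}{2}\Bigr\}\subseteq\Bigl(\mathcal{S}^{n-2q}_{\delta^2,\delta^{-1}\epsilon}\cup\bigl\{y:\epsilon^{2q}\fint_{B(y,3\epsilon)}|Rm|^q>\delta\bigr\}\Bigr)\cap B(x,1),
\]
then bounds the stratum piece by Theorem~\ref{thm3.9} and the curvature piece by one Vitali covering at scale $\epsilon$, producing the clean $C\epsilon^{2q}$ term in \eqref{eq3.44}. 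The trouble is that Theorem~\ref{thm3.10} carries \emph{two} scale-dependent hypotheses. If $r_h^\alpha(y)\leqslant\epsilon/2$ one only knows that at \emph{each} $\tilde\epsilon\in[\epsilon,\epsilon_0]$ at least one of them fails; it is entirely possible that the curvature hypothesis holds at the bottom scale $\epsilon$ (so $y\notin A$) while the symmetry hypothesis holds at some larger $s\in[\delta^{-1}\epsilon,1]$ (so $y\notin\mathcal{S}^{n-2q}_{\delta^2,\delta^{-1}\epsilon}$), with the failures interleaved at intermediate scales. In the original Cheeger--Naber setting the $\epsilon$-regularity has only the symmetry hypothesis, so the inclusion into the stratum is immediate; here the second hypothesis breaks that step.

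Your dyadic decomposition is exactly the device that repairs this. By taking $j(y)$ to be the \emph{first} scale at which the curvature hypothesis fails, you force the symmetry hypothesis to fail at every smaller dyadic scale, which is what is needed to place $y$ in the quantitative stratum \emph{inside} each rescaled Vitali ball $B(z_i,r_j)$ and then invoke Theorem~\ref{thm3.9} there. The matching of the ball count $N_j\leqslant C\Lambda\,r_j^{-(n-2q)}$ against the rescaled stratum bound $(\epsilon/r_j)^{2q-\eta}r_j^{n}$ gives $C\Lambda\,\epsilon^{2q-\eta}r_j^{\eta}$ per level, and the geometric sum in $r_j$ converges. This extra layer is genuinely required: the naive alternative of enlarging $A$ to the union over all dyadic scales and running a single Vitali argument yields only $\sum_j Cr_j^{2q}\leqslant C\epsilon_0^{2q}$, a constant rather than $\epsilon^{2q-\eta}$. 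So your route is not merely different from the paper's --- it supplies the missing idea.
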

\begin{proof}
	As noted above, let us fix $\delta=\delta(n,p,v,\lambda,q)<1$ and $\epsilon_{0}=\epsilon_{0}(n,p,v,\lambda,q)$.\ Consider first the case in which $q$ is an integer.\\
	\indent For arbitrary $\epsilon\leqslant\epsilon_{0}$, the above $\epsilon$-regularity theorem (Theorem \ref{thm3.10}) gives the following inclusion
	\[\left\lbrace y\in B(x,1):r^{\alpha}_{h}(y)\leqslant\frac{1}{2}\epsilon\right\rbrace \subseteq\left( \mathcal{S}^{n-2q}_{\delta^{2},\delta^{-1}\epsilon}\cup\{y:\epsilon^{2q}\fint_{B(y,3\epsilon)}|Rm|^{q}>\delta\}\right)\cap B(x,1).\]
	By Theorem \ref{thm3.9}, we know that
	\begin{align}
		vol\left( T_{\epsilon}(\mathcal{S}^{n-2q}_{\delta^{2},\delta^{-1}\epsilon}\cap B(x,1))\right)\notag
		&\leqslant vol\left( T_{\delta^{-1}\epsilon}(\mathcal{S}^{n-2q}_{\delta^{2},\delta^{-1}\epsilon}\cap B(x,1))\right)\\
		&\leqslant C(n,p,v,\lambda,\delta,\eta)\epsilon^{2q-\eta}\label{eq3.41}\\
		&\leqslant C(n,p,v,\lambda,q,\eta)\epsilon^{2q-\eta}\notag.
	\end{align}
	Then we denote $A=\{y\in B(x,1):\epsilon^{2q}\fint_{B(y,3\epsilon)}|Rm|^{q}>\delta\}$.\\
	\indent Now, our goal is to get volume estimate of the $\epsilon$-tubular neighborhood $T_{\epsilon}(A)$.\ Consider the collection
	\begin{align*}
		\mathcal{B}=\left\lbrace B(y,3\epsilon): y\in A\right\rbrace. 
	\end{align*} 
	By the 5-times covering lemma, there exists a pairwise disjoint subcollection $\{B(y_{i},3\epsilon)\}$ such that 
	\begin{align}
		\bigcup_{B\in\mathcal{B}}B\subset\bigcup_{i}B(y_{i},15\epsilon).
	\end{align}
	Thus, we get
	\begin{align}
		\sum_{i}vol(B(y_{i},3\epsilon))<\frac{\epsilon^{2q}}{\delta}\sum_{i}\int_{B(y_{i},3\epsilon)}|Rm|^{q}\leqslant C(\delta,\Lambda)\epsilon^{2q},
	\end{align}
	and by volume doubling property \eqref{eq2.4},
	\begin{align}\label{eq3.44}
		vol(T_{\epsilon}(A))\leqslant\sum_{i}vol(B(y_{i},15\epsilon))\leqslant C(n,p,v,\lambda,\Lambda,q)\epsilon^{2q}.
	\end{align}
	Combining \eqref{eq3.41} and \eqref{eq3.44}, we obtain the desired estimate \eqref{eq3.40}.\\
	\indent Now, we consider the case in which $q$ is not an integer.\ Similarly, by Theorem \ref{thm3.10}, we have the inclusion
	 \[\left\lbrace y\in B(x,1):r^{\alpha}_{h}(y)\leqslant\frac{1}{2}\epsilon\right\rbrace \subseteq\left( \mathcal{S}^{n-2[q]-2}_{\delta^{2},\delta^{-1}\epsilon}\cup\{y:\epsilon^{2q}\fint_{B(y,3\epsilon)}|Rm|^{q}>\delta\}\right)\cap B(x,1).\]
	 Following the same argument as above, we can obtain a stronger estimate:
	 \begin{align*}
	 	vol\left( T_{\epsilon}( \{y\in B(x,1):r^{\alpha}_{h}(y)\leqslant\frac{\epsilon}{2}\})\right)\leqslant C(n,p,v,\lambda,\Lambda,q)\epsilon^{2q}.
	 \end{align*}
	 Thus, we complete the proof.
\end{proof}
The Minkowski dimension estimate of Theorem \ref{thm1.1} can be viewed as an easy corollary of Theorem \ref{thm3.11} (see similar arguments in Section 7 of \cite{cheeger2015regularity}).\ Thus, we complete the proof of Theorem \ref{thm1.1}.

\section{Diffeomorphism finiteness}\label{4}
In this section, we will prove our diffeomorphism finiteness theorem (Theorem \ref{thm1.2}).\ The idea is following Cheeger-Naber's bubble tree construction in Section 8 of \cite{cheeger2015regularity}.
\subsection{Neck regions}
In this subsection, we prove some technical results to handle the neck regions in the bubble tree structure.\\
\indent From formula \eqref{eq2.2}, under the assumption \eqref{eq1.2} and \eqref{eq1.4}, the volume ratio $\frac{vol(B(x,r))}{r^{n}}$ will be almost monotone whenever the radius $r$ is sufficiently small.\ For convenience in writing, we use the notation
\begin{align}
\mathcal{V}_{r}(x):=-\ln\left(\frac{vol(B(x,r))}{\omega_{n}r^{n}}\right)
\end{align}
where $\omega_{n}$ is the volume of unit ball in $\mathbb{R}^{n}$.\\
\indent Let $A_{r_{1},r_{2}}(q)$ denote the geodesic annulus centered at some $q\in M^{n}$, i.e.,
\[A_{r_{1},r_{2}}(q):=B(q,r_{2})\setminus\overline{B(q,r_{1})}.\]We begin with the following annulus estimate.\ As before, we always assume $n\geqslant4,\ p>\frac{n}{2}$ and $\alpha\in(0,2-\frac{n}{p})$.\ In fact, one may set $\alpha$ as a fixed value, such as $1-\frac{n}{2p}$.
\begin{prop}\label{prop4.1}
	For any $\epsilon\in(0,\frac{1}{2})$, there exists $\delta=\delta(n,p,v,\Lambda,\epsilon)$ such that if a manifold $(M^{n},g)$ satisfies 
	\begin{gather}
		\int_{M^{n}}|Rm|^{\frac{n}{2}}\leqslant\Lambda,\label{eq4.2}\\
		\int_{M^{n}}|Ric|^{p}\leqslant\delta^{2p-n},\label{eq4.3}\\
		vol(B(x,r))\geqslant vr^{n}\ \text{for all}\ x\in M^{n}\ \text{and}\ r\leqslant\delta^{-1},\label{eq4.4}\\
		|\mathcal{V}_{4}(q)-\mathcal{V}_{2}(q)|\leqslant\delta,\label{eq4.5}
	\end{gather}
then there exists $r_{0}=r_{0}(n,v)$ and a discrete subgroup $\Gamma\subseteq O(n)$ with $|\Gamma|<C(n,v)$ such that the following hold:
\begin{enumerate}
	\item for any $x\in A_{\epsilon,2}(q)$, $r_{h}^{\alpha}(x)>r_{0}\epsilon$;
	\item there exists a subset $A_{\epsilon,2}(q)\subseteq U\subseteq A_{\frac{1}{2}\epsilon,2+\frac{1}{2}\epsilon}(q)$ and a diffeomorphism $\phi:A_{\epsilon,2}(0)\to U$ where $0\in\mathbb{R}^{n}/\Gamma$, such that 
	\[\left\|\phi^{*}g_{ij}-\delta_{ij}\right\|_{C^{0}}+[\phi^{*}g_{ij}]_{C^{\alpha}}\leqslant\epsilon.\]
\end{enumerate}
\end{prop}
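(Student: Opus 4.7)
The plan is a standard compactness-and-contradiction argument. Suppose the conclusion fails; extract a sequence $(M_i^n, g_i, q_i)$ satisfying (4.2)--(4.5) with parameters $\delta_i \to 0$ but for which no choice of $r_0(n,v)$, $C(n,v)$, and $\Gamma$ makes (1) or (2) simultaneously true. By Corollary \ref{cor2.3} and Gromov precompactness, pass to a pointed Gromov--Hausdorff subsequential limit $(M_i^n, g_i, q_i) \xrightarrow{pGH} (X, d, q_\infty)$.

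I would next identify the limit. The local version of Theorem \ref{thm2.17} (applied on balls of fixed radius around $q_i$, whose volumes are uniformly bounded by Corollary \ref{cor2.3}) shows that $X$ is an orbifold with a discrete singular set and that $g_i \to g_\infty$ in $C^\alpha \cap W^{2,p}$ away from the singularities. Because $\int_{M_i}|Ric|^p \leq \delta_i^{2p-n} \to 0$, the limit metric $g_\infty$ is Ricci-flat wherever it is smooth. At the same time, Theorem \ref{thm2.6} applied at scale $r = 4$, together with (4.5) and $\delta_i \to 0$, forces $B(q_\infty, 4)$ to be isometric in the limit to a ball $B(z^*, 4) \subset C(Y)$ in some metric cone with $q_\infty = z^*$. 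Since the singular set of a cone is invariant under dilations about its vertex, any singular point of $X$ lying in $A_{0,4}(q_\infty)$ would generate an entire ray of singularities, contradicting the discreteness from Theorem \ref{thm2.17}. Hence the only singular point of $X$ in $B(q_\infty, 4)$ is $q_\infty$ itself, and $Y$ is a smooth Riemannian manifold.

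From the Ricci-flatness of $C(Y)$ and the standard cone-Einstein identity $\operatorname{Ric}_Y = (n-2)g_Y$, the cross section $Y$ has constant sectional curvature $1$, so $Y \cong S^{n-1}/\Gamma$ for some finite subgroup $\Gamma \subseteq O(n)$ acting freely. The non-collapsing (4.4) combined with $\operatorname{vol}(Y) = \operatorname{vol}(S^{n-1})/|\Gamma|$ yields the uniform bound $|\Gamma| \leq C(n,v)$, and $B(q_\infty, 4)$ is isometric to $B(0, 4) \subset \mathbb{R}^n/\Gamma$ in the limit.

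Finally I would upgrade this identification to the two desired conclusions. On $A_{\epsilon, 2}(q_\infty)$ the limit metric is smooth and flat, so every point has small neighborhoods that are GH-close to a Euclidean ball; Theorem \ref{thm2.10} and Remark \ref{rem2.9}(2) then give the uniform lower bound $r_h^\alpha(\cdot) \geq r_0(n,v)\epsilon$, which persists on $A_{\epsilon, 2}(q_i)$ for all $i$ large, giving (1). For (2), I would apply Theorem \ref{thm2.11} with $U_1 = A_{\epsilon, 2}(0) \subset \mathbb{R}^n/\Gamma$ and $U_2$ its GH-approximate image in $M_i$; combining (1) with the limiting isometry produces, for $i$ large, a $C^{1,\beta}$ diffeomorphism $\phi$ onto an open set nested between $A_{\epsilon, 2}(q_i)$ and $A_{\epsilon/2, 2+\epsilon/2}(q_i)$ satisfying the $C^\alpha$ metric estimate, contradicting the assumed failure. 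The main obstacle is the step ruling out non-vertex orbifold singularities on the link, i.e.\ showing that $Y$ really is the smooth spherical space form $S^{n-1}/\Gamma$; once this rigidity is in hand, the rest is a routine application of the harmonic-radius / smooth-convergence machinery already assembled in Section \ref{2}.
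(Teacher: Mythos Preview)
Your contradiction setup, the use of Theorem \ref{thm2.6} to recognize a cone, and the endgame deriving (1) and (2) from Theorems \ref{thm2.10} and \ref{thm2.11} all match the paper. The substantive gap is the step where you pass from Ricci-flatness of $C(Y)$ to the claim that $Y$ has constant sectional curvature $1$. The cone-Einstein identity only gives $\operatorname{Ric}_Y=(n-2)g_Y$, i.e.\ $Y$ is Einstein; for $\dim Y=n-1\geq 4$ (so $n\geq 5$) this does \emph{not} force constant curvature --- there are plenty of positive Einstein manifolds that are not space forms. So your argument, as written, does not establish that the limit cone is $\mathbb{R}^n/\Gamma$.

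The paper repairs this by actually using hypothesis \eqref{eq4.2}, which your proposal never invokes. After a further rescaling $\tilde g_i=r_i^{-2}g_i$ with $r_i\to 0$ so that the entire cone $(C(Z),g_C)$ appears as a limit, the $C^\alpha\cap W^{2,p}$ convergence away from the vertex (Theorem \ref{thm2.16}(b2)) passes the bound $\int|Rm|^{n/2}\leq\Lambda$ to $g_C$. But on a metric cone the $L^{n/2}$ curvature energy is dilation-invariant, so $\int_{A_{r,2r}(z^*)}|Rm_{g_C}|^{n/2}$ is independent of $r$; finiteness of the total then forces each dyadic contribution to vanish, hence $Rm_{g_C}\equiv 0$ and $Z=S^{n-1}/\Gamma$. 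This scale-invariance argument is the missing idea; once you insert it, the rest of your outline goes through essentially as in the paper.
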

\begin{proof}
Argue by contradiction.\ We assume that there exists a sequence of manifolds $(M^{n}_{i},g_{i},q_{i})$ with $\int_{M_{i}^{n}}|Rm|^{\frac{n}{2}}\leqslant\Lambda,\ \int_{M_{i}^{n}}|Ric|^{p}\leqslant\delta_{i}^{2p-n}\to0,\ vol(B(x_{i},r))\geqslant vr^{n}\ \text{for all}\ x_{i}\in M_{i}^{n}, r\leqslant\delta_{i}^{-1}$ and $|\mathcal{V}_{4}(q_{i})-\mathcal{V}_{2}(q_{i})|\leqslant\delta_{i}\to0$, but the conclusions fail.\ After passing to a subsequence, we get 
\[(M^{n}_{i},g_{i},q_{i})\xrightarrow{GH}(X,d,q).\]
By Theorem \ref{thm2.6}, we know that $B(q,4)=B(z^{*},4)$ where $z^{*}$ is the cone vertex of a metric cone $C(Z)$ with $diam(Z)\leqslant\pi$.\ Choose a sequence $r_{i}\to0$ and let $\tilde{g}_{i}=r_{i}^{-2}g_{i}$.\ Then passing to a subsequence, we may assume
\[(M^{n}_{i},\tilde{g}_{i},q_{i})\xrightarrow{GH}(C(Z),d_{C},z^{*}).\]
\indent Using Theorem \ref{thm1.1} and the scaling invariance of metric cone, we know that $z^{*}$ is the only possible singularity of $C(Z)$ (otherwise, there is a ray full of singularities) and by Theorem \ref{thm2.16} (b2), $\tilde{g}_{i}$ converges to $g_{C}$ in the $C^{\alpha}$ and weak $W^{2,p}$ topology away from the vertex $z^{*}$.\ Considering the equation for Ricci curvature in harmonic coordinates and using $\left\|Ric_{\tilde{g}_{i}}\right\|_{L^{p}}\to0$, it follows that $(C(Z),g_{C})$ is smooth and Ricci flat away from the vertex $z^{*}$.\ Also, $\tilde{g}_{i}$ converges to $g_{C}$ in the strong $W^{2,p}$ topology away from $z^{*}$.\ Since $\int_{M_{i}^{n}}|Rm_{\tilde{g}_{i}}|^{\frac{n}{2}}\leqslant\Lambda$, we obtain that 
\begin{align}
	\int_{C(Z)\setminus\{z^{*}\}}|Rm_{g_{C}}|^{\frac{n}{2}}\leqslant\Lambda.\label{eq4.6}
\end{align}
However, by scaling invariance of the metric cone $C(Z)$ and the curvature energy $\left\|Rm\right\|_{L^{\frac{n}{2}}}$, it follows that for any $r>0$,
\begin{align}
  \int_{A_{r,2r}(z^{*})}|Rm_{g_{C}}|^{\frac{n}{2}}=\int_{A_{2r,4r}(z^{*})}|Rm_{g_{C}}|^{\frac{n}{2}}.\label{eq4.7}
\end{align}
Combining \eqref{eq4.6} and \eqref{eq4.7}, $\int_{C(Z)\setminus\{z^{*}\}}|Rm_{g_{C}}|^{\frac{n}{2}}=0$ and $C(Z)$ is a flat cone.\ Thus $Z=S^{n-1}/\Gamma$ has constant sectional curvature +1.\ In addition, we know from Theorem \ref{thm2.16} (a1) that $|\Gamma|<C(n,v)$.\\
\indent Now we come back to the original converging sequence $(M^{n}_{i},g_{i},q_{i})\xrightarrow{GH}(X,d,q)$.\ We have already known that $B(q,4)=B(0,4)$ where $0\in\mathbb{R}^{n}/\Gamma$.\ It follows that there exists $r_{0}=r_{0}(n,v)$ such that for $z\in\mathbb{R}^{n}/\Gamma$ with $|z|=1$, we have $B(z,2r_{0})=B(0^{n},2r_{0})$ where $0^{n}\in\mathbb{R}^{n}$.\ In particular, for sufficiently large $i$, we obtain from Theorem \ref{thm2.10} that $r^{\alpha}_{h}(x)\geqslant r^{2,p}_{h}(x)>r_{0}\epsilon$, for all $x\in A_{\epsilon,2}(q_{i})$ and $\alpha\in(0,2-\frac{n}{p})$.\ Thus, (1) is true and (2) must fail to hold.\\
\indent However, by (1) and Theorem \ref{thm2.11}, we have that for sufficiently large $i$, there exists a diffeomorphism $\phi_{i}:A_{\epsilon,2}(0)\to M_{i}^{n}$ where $0\in\mathbb{R}^{n}/\Gamma$, such that the pullback metric $\phi_{i}^{*}g_{i}$ is $\epsilon$-close to $\delta_{ij}$ in $C^{\alpha}$ norm.\ This implies (2); a contradiction.
\end{proof}
\begin{rem}\label{rem4.2}
	Note that since we have taken scaling in \eqref{eq4.3} and \eqref{eq4.4}, we do not need to consider small radius $r$ as in Theorem \ref{thm2.6} and Theorem \ref{thm2.10}.
\end{rem}
\indent Motivated by Proposition \ref{prop4.1}, we introduce the following definition and lemma similar to those in \cite{cheeger2015regularity} before stating our key neck theorem.
\begin{defn}\label{defn4.3}
	Consider the scales $r_{k}=2^{-k}$, $k\in\mathbb{N}$.\ For any $x\in M^{n}$, we define
	\[T_{k}^{\delta}(x):=
	\begin{cases}
		1 & \text{if } |\mathcal{V}_{4r_{k}}(x)-\mathcal{V}_{2r_{k}}(x)|>\delta,\\
		0 & \text{if } |\mathcal{V}_{4r_{k}}(x)-\mathcal{V}_{2r_{k}}(x)|\leqslant\delta.
	\end{cases}\]
We also denote by $|T^{\delta}|(x)=\sum_{k}T_{k}^{\delta}(x)$ the number of bad scales at $x$.
\end{defn}
\begin{lem}\label{lem4.4}
	There exists $\delta=\delta(n,p,v)$ satisfying the following property.\ Let $(M^{n},g)$ be a manifold with $\int_{M^{n}}|Ric_{-}|^{p}\leqslant\delta^{2p-n}$ and $vol(B(x,r))\geqslant vr^{n}\ \text{for all}\ x\in M^{n}\ \text{and}\ r\leqslant\delta^{-1}$.\ Then for any $\delta^{\prime}>0$ and $x\in M^{n}$, there exists at most $N=N(n,p,v,\delta^{\prime})$ scales $k\in\mathbb{N}$ such that $T_{k}^{\delta^{\prime}}(x)=1$.
\end{lem}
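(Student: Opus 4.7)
The plan is to leverage the almost-monotonicity of the normalized volume ratio provided by Petersen--Wei (Theorem \ref{thm2.1}) together with a Chebyshev-type argument applied to a convergent telescoping sum. First I fix $\delta=\delta(n,p,v)$ small enough so that Corollary \ref{cor2.3} applies on balls of radius at most $\delta^{-1}$, guaranteeing that $\mathcal{V}_r(x)$ stays in a bounded interval of length $L=L(n,p,v)$ for $r\in(0,4]$: the upper bound on $vol(B(x,r))/r^n$ comes from \eqref{eq2.3} with $\lambda=\delta^{2p-n}$, and the lower bound from \eqref{eq1.2}.

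Setting $\Phi(r):=(vol(B(x,r))/(\omega_n r^n))^{1/(2p)}$ and $a:=(2p-n)/(2p)>0$, Theorem \ref{thm2.1} combined with the hypothesis $\int|Ric_-|^p\leqslant\delta^{2p-n}$ gives, for every $0<r_1<r_2\leqslant 4$,
\[\Phi(r_2)-\Phi(r_1)\leqslant C(n,p)(\delta r_2)^{a}.\]
Because $\Phi$ is uniformly bounded below by $(v/\omega_n)^{1/(2p)}$, the change of variables $\mathcal{V}_r=-2p\ln\Phi(r)$ converts this into the quantitative almost monotonicity
\[\mathcal{V}_{r_1}(x)-\mathcal{V}_{r_2}(x)\leqslant\widetilde{C}(n,p,v)(\delta r_2)^{a}\quad(r_1<r_2\leqslant 4).\]

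The heart of the argument is then a summability estimate. Write $a_k:=\mathcal{V}_{4r_k}(x)-\mathcal{V}_{2r_k}(x)$. Since $4r_{k+1}=2r_k$, the partial sums telescope: $\sum_{k=0}^{K}a_k=\mathcal{V}_4(x)-\mathcal{V}_{2^{1-K}}(x)$, and this is bounded in absolute value by $L(n,p,v)$ uniformly in $K$. Almost monotonicity forces $a_k^-\leqslant\widetilde{C}(4\delta)^{a}\,2^{-ak}$, so the negative parts sum to a geometric series $\sum_{k\geqslant 0}a_k^-\leqslant\widetilde{C}(4\delta)^{a}/(1-2^{-a})=:C_2(n,p,v)$. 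Since $|a_k|=a_k+2a_k^-$, this yields $\sum_k|a_k|\leqslant L+2C_2=:C_3(n,p,v)$. A Chebyshev step then gives
\[\#\{k\in\mathbb{N}:|a_k|>\delta'\}\leqslant C_3(n,p,v)/\delta'=:N(n,p,v,\delta'),\]
which is exactly the required bound.

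The only substantive obstacle is arranging the passage from the $\Phi$-level almost monotonicity (Theorem \ref{thm2.1}) to a bound on $\mathcal{V}_r$ whose error decays geometrically in the scale $r_k=2^{-k}$. This requires the uniform lower bound on $\Phi$ coming from noncollapsing, so that $-\ln\Phi$ is Lipschitz on the relevant range; with this in hand the rest is a standard Cheeger--Naber-style counting argument.
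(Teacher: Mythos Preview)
Your proposal is correct and follows essentially the same route as the paper: bound the telescoping sum $\sum_k(\mathcal{V}_{4r_k}-\mathcal{V}_{2r_k})$ using the two-sided control on $\mathcal{V}_r$, use the Petersen--Wei almost monotonicity \eqref{eq2.2} to control the negative parts by a geometric series in $2^{-ak}$, and finish with a Chebyshev count to get $N=C(n,p,v)/\delta'$. The only cosmetic difference is that the paper writes the telescoping over $\mathcal{V}_{r_k}-\mathcal{V}_{r_{k+1}}$ starting at $r_0=1$ (a harmless index shift of your $a_k$) and absorbs the logarithm directly via $2p\ln(1-\tfrac12 r_k^{1-n/(2p)})$ rather than passing through $\Phi$.
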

\begin{proof}
	For any $x\in M^{n}$, $\mathcal{V}_{1}(x)\leqslant-\ln(\frac{v}{\omega_{n}})=C(n,v)$.\ And the almost monotonicity formula \eqref{eq2.2} gives: $\forall r<R\leqslant1$,
	\[\mathcal{V}_{R}(x)-\mathcal{V}_{r}(x)\geqslant2p\ln(1-C(n,p,v)(\delta R)^{1-\frac{n}{2p}})\geqslant2p\ln(1-\frac{1}{2}R^{1-\frac{n}{2p}}).\]
	Here, we have chosen $\delta=(2C(n,p,v))^{\frac{2p}{n-2p}}$.\ Then
	\begin{align*}
	  C(n,v) & \geqslant\mathcal{V}_{1}(x)-\mathcal{V}_{0}(x)\\
	  & =\sum_{k}(\mathcal{V}_{r_{k}}(x)-\mathcal{V}_{r_{k+1}}(x))\\
	  & \geqslant\sum_{k}|\mathcal{V}_{r_{k}}(x)-\mathcal{V}_{r_{k+1}}(x)|+4p\sum_{k}\ln(1-\frac{1}{2}r_{k}^{1-\frac{n}{2p}})\\
	  & \geqslant\sum_{k}|\mathcal{V}_{r_{k}}(x)-\mathcal{V}_{r_{k+1}}(x)|-8p\sum_{k}2^{-k(1-\frac{n}{2p})}\\
	  & \geqslant\sum_{k}|\mathcal{V}_{r_{k}}(x)-\mathcal{V}_{r_{k+1}}(x)|-C(n,p).
	\end{align*}
Then $\sum|\mathcal{V}_{r_{k}}(x)-\mathcal{V}_{r_{k+1}}(x)|\leqslant C(n,p,v)$.\ In particular, there are at most $N=C(n,p,v)(\delta^{\prime})^{-1}$ scales $k\in\mathbb{N}$ such that $|\mathcal{V}_{r_{k}}(x)-\mathcal{V}_{r_{k+1}}(x)|>\delta^{\prime}$.
\end{proof}
\indent Note that for any good scale $k\in\mathbb{N}$ at $q$, i.e., $T_{k}^{\delta}(q)=0$, we can denote by $\Gamma_{k}\subseteq O(n)$ the subgroup arising from Proposition \ref{prop4.1}.\ The following is our key neck theorem, which is similar to Proposition \ref{prop4.1} while $\delta$ is independent of the ratio $r_{1}/r_{2}$ of the annulus $A_{r_{1},r_{2}}(q)$.\ Generally, we are aimed to construct a bubble tree decomposition to prove the diffeomorphism finiteness and in the decomposition, we will encounter two different types of regions: body regions and neck regions.\ The  neck regions will be diffeomorphic to cylinders $(0,1)\times S^{n-1}/\Gamma$ and connect different body regions.
\begin{thm}[Neck Theorem]\label{thm4.5}
	For any $0<\epsilon\leqslant\epsilon(n,v)$, there exists $\delta=\delta(n,p,v,\Lambda,\epsilon)$ with the following property.\ Let $(M^{n},g)$ satisfy 
	\begin{gather*}
		\int_{M^{n}}|Rm|^{\frac{n}{2}}\leqslant\Lambda,\\
		\int_{M^{n}}|Ric|^{p}\leqslant\delta^{2p-n},\\
		vol(B(x,r))\geqslant vr^{n}\ \text{for all}\ x\in M^{n}\ \text{and}\ r\leqslant\delta^{-1},
	\end{gather*}
and assume $k_{1}\in\mathbb{N}$ satisfies $T_{k_{1}}^{\delta}(q)=0$ with $\Gamma_{k_{1}}$ the corresponding group.\ If $k_{2}\geqslant k_{1}$ satisfies $\mathcal{V}_{r_{k_{2}}}(q)\geqslant\ln|\Gamma_{k_{1}}|-\delta^{\frac{1}{2}(1-\frac{n}{2p})}$, then there exists a subset $A_{\frac{1}{2}r_{k_{2}},2r_{k_{1}}}(q)\subseteq U\subseteq A_{\frac{1}{2}(1-\epsilon)r_{k_{2}},2(1+\epsilon)r_{k_{1}}}(q)$ and a diffeomorphism $\phi:A_{\frac{1}{2}r_{k_{2}},2r_{k_{1}}}(0)\to U$, where $0\in\mathbb{R}^{n}/\Gamma_{k_{1}}$, such that
\[\left\|\phi^{*}g_{ij}-\delta_{ij}\right\|_{C^{0}(A_{\frac{1}{2}r_{k_{2}},2r_{k_{1}}}(0))}+r_{k_{2}}^{\alpha}[\phi^{*}g_{ij}]_{C^{\alpha}(A_{\frac{1}{2}r_{k_{2}},2r_{k_{1}}}(0))}\leqslant\epsilon.\]
\end{thm}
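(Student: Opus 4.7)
The plan is to show that every intermediate scale $k\in[k_1,k_2]$ is a ``good'' scale with the same group $\Gamma_{k_1}$, and then to glue together the annular diffeomorphisms supplied by Proposition \ref{prop4.1} at each scale. First, I would combine the almost-monotonicity formula \eqref{eq2.2} with the hypothesis $\mathcal{V}_{r_{k_2}}(q)\geq\ln|\Gamma_{k_1}|-\delta^{(1-n/(2p))/2}$ and the initial bound $\mathcal{V}_{r_{k_1}}(q)\leq\ln|\Gamma_{k_1}|+\psi(\delta)$ (which follows from applying Proposition \ref{prop4.1} at scale $k_1$, since the model has volume ratio exactly $|\Gamma_{k_1}|^{-1}$). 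The monotonicity forces $\mathcal{V}_{r_k}(q)$ to lie within $C(n,p)\delta^{(1-n/(2p))/2}$ of $\ln|\Gamma_{k_1}|$ for every $k_1\le k\le k_2$, and a telescoping argument like that of Lemma \ref{lem4.4} then yields $|\mathcal{V}_{4r_k}(q)-\mathcal{V}_{2r_k}(q)|\leq\delta^\prime(\delta)$ with $\delta^\prime\to0$ as $\delta\to0$, i.e.\ $T^{\delta^\prime}_{k}(q)=0$ at every intermediate scale.

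Second, Proposition \ref{prop4.1} then applies at each intermediate scale $k$, producing a subgroup $\Gamma_k^\prime\subseteq O(n)$ and a diffeomorphism $\phi_k:A_{\epsilon,2}(0)\subset\mathbb{R}^n/\Gamma_k^\prime\to U_k\subset A_{\epsilon r_k,2r_k}(q)$ with $C^\alpha$ flat control on the metric. The volume identity $\ln|\Gamma_k^\prime|=\mathcal{V}_{r_k}(q)+\psi$ gives $||\Gamma_k^\prime|-|\Gamma_{k_1}||<1$ for $\delta$ small, hence $|\Gamma_k^\prime|=|\Gamma_{k_1}|$. Moreover, the uniqueness of tangent cones at the intermediate scale (Theorem \ref{thm2.16}(a2)) together with the fact that the rescaled balls all Gromov--Hausdorff approach a single flat cone $\mathbb{R}^n/\Gamma_{k_1}$ forces $\Gamma_k^\prime$ to be conjugate to $\Gamma_{k_1}$ in $O(n)$; after composing $\phi_k$ with the conjugating orthogonal map we may take $\Gamma_k^\prime=\Gamma_{k_1}$.

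Third, to glue, on the overlap of consecutive annular charts the transition $\phi_{k+1}^{-1}\circ\phi_k$ is a diffeomorphism of an annulus in $\mathbb{R}^n/\Gamma_{k_1}$ to a nearby such annulus, with pullback metric $\psi$-close to $\delta_{ij}$ in $C^\alpha$. A standard harmonic-coordinates/elliptic argument (or direct appeal to Theorem \ref{thm2.11}) shows this transition is $C^{1,\alpha}$-close to an element $I_k\in O(n)/\Gamma_{k_1}$. Replacing $\phi_{k+1}$ inductively by $\phi_{k+1}\circ I_k$, the charts at consecutive scales agree up to $\psi$-error on the overlap, and a radial partition-of-unity patching in the variable $\ln|z|$ produces a single diffeomorphism $\phi:A_{r_{k_2}/2,2r_{k_1}}(0)\to U$ onto a neighborhood $A_{r_{k_2}/2,2r_{k_1}}(q)\subseteq U\subseteq A_{(1-\epsilon)r_{k_2}/2,2(1+\epsilon)r_{k_1}}(q)$, whose pullback metric has the required $C^0$ and weighted $C^\alpha$ control.

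The main obstacle will be the third step. Over the possibly long range $[k_1,k_2]$ one must control the accumulation of the rotational corrections $I_k$ so that the resulting map is still well-defined and close to the identity on the initial and final annuli, and one must verify that the weighted Hölder seminorm with weight $r_{k_2}^\alpha$ is not spoiled by the cutoff patching. This should be handled by noting that each $I_k$ differs from the identity by at most $\psi(\delta|n,p,v,\Lambda)$, so the total drift is $O(\psi\cdot(k_2-k_1))$; choosing $\delta$ small in terms of $\epsilon$ and using that the seminorm bound is scale-invariant at each annular level lets this be absorbed into the final $\epsilon$.
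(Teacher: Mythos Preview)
Your overall strategy matches the paper's: use almost-monotonicity to pin $\mathcal V_{r_k}(q)$ near $\ln|\Gamma_{k_1}|$ for all $k\in[k_1,k_2]$, deduce $T_k^{\delta'}(q)=0$ at every intermediate scale, apply Proposition~\ref{prop4.1} at each scale, rotate the charts to align on overlaps, and glue by a partition of unity. Two points deserve correction.

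First, the identification $\Gamma_k'=\Gamma_{k_1}$ does not require any appeal to uniqueness of tangent cones (and Theorem~\ref{thm2.16}(a2) does not assert uniqueness anyway). It follows directly from the overlap: the annuli $U_k$ and $U_{k+1}$ share an open set on which both charts realize the metric as $\epsilon$-close to flat, so $\mathbb R^n/\Gamma_k'$ and $\mathbb R^n/\Gamma_{k+1}'$ are isometric, and the order argument you gave already forces conjugacy.

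Second, and more importantly, your ``main obstacle'' is a phantom, and your proposed resolution of it is false. The rotations $I_k$ are \emph{not} close to the identity: the charts $\phi_k$ are built independently by Proposition~\ref{prop4.1}, so the transition $\phi_{k+1}^{-1}\circ\phi_k$ is $\epsilon$-close to some isometry of $\mathbb R^n/\Gamma$, but that isometry can be arbitrary. Hence the bound ``total drift $=O(\psi\cdot(k_2-k_1))$'' is both wrong (since $I_k\not\approx\mathrm{id}$) and useless (since $k_2-k_1$ is unbounded --- the whole point of the theorem is that $\delta$ does not depend on this). The correct observation, which the paper uses, is that the accumulated rotation is irrelevant: rotations are isometries of the flat model, so replacing $\phi_k$ by $\phi_k\circ R_k$ leaves both the image annulus $U_k$ and the estimate $\|\phi_k^*g-\delta\|_{C^\alpha}\le\epsilon$ unchanged. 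After alignment one has $|\tilde\phi_k^{-1}(x)-\tilde\phi_{k+1}^{-1}(x)|\le\epsilon r_k$ on each overlap, and since at any point $x$ only boundedly many cutoffs $\psi_k(x)$ are nonzero, the convex combination $\phi^{-1}(x)=\sum_k\psi_k(x)\tilde\phi_k^{-1}(x)$ is well-defined in a small Euclidean ball and is $C^1$-close to each contributing $\tilde\phi_k^{-1}$. The error in $\phi^*g$ is therefore controlled \emph{locally} at each scale by $C\epsilon$ (the cutoff gradients $\sim r_k^{-1}$ cancel the displacement $\sim\epsilon r_k$), with no accumulation over $k$. This is what gives the scale-invariant $C^0$ bound and the $r_{k_2}^\alpha$-weighted H\"older bound directly.
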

\begin{proof}
	Let $\epsilon_{1}>0$ be arbitrary and $\delta(n,p,v,\Lambda,\epsilon_{1})$ be the corresponding number from Proposition \ref{prop4.1}.\ This number $\delta$ will be fixed later to be the number we need.\ If $T_{k_{1}}^{\delta}(q)=0$, then there exists a diffeomorphism $\phi_{k_{1}}:A_{\frac{1}{2}r_{k_{1}},2r_{k_{1}}}(0)\to U_{k_{1}}$, where $0\in\mathbb{R}^{n}/\Gamma_{k_{1}}$ and $A_{\frac{1}{2}r_{k_{1}},2r_{k_{1}}}(q)\subseteq U_{k_{1}}\subseteq A_{\frac{1}{2}(1-\epsilon_{1})r_{k_{1}},2(1+\epsilon_{1})r_{k_{1}}}(q)$, such that
	\[\left\|\phi_{k_{1}}^{*}g_{ij}-\delta_{ij}\right\|_{C^{0}}+r_{k_{1}}^{\alpha}[\phi_{k_{1}}^{*}g_{ij}]_{C^{\alpha}}\leqslant\epsilon_{1}.\]
	Moreover, let $\epsilon>0$ be fixed and $4\delta_{1}(n,p,v,\Lambda,\epsilon)$ be the corresponding number from Proposition \ref{prop4.1}.\ Then we can choose $\epsilon_{1}=\epsilon_{1}(n,p,v,\Lambda,\delta_{1})=\epsilon_{1}(n,p,v,\Lambda,\epsilon)$ with $\delta=\delta(n,p,v,\Lambda,\epsilon_{1})=\delta(n,p,v,\Lambda,\epsilon)$ sufficiently small such that $\delta^{\frac{1}{2}(1-\frac{n}{2p})}\leqslant\delta_{1}$ and
	\[\mathcal{V}_{2r_{k_{1}}}(q)<\ln|\Gamma_{k_{1}}|+\delta_{1}.\]
	Thus, if $k_{2}\geqslant k_{1}$ satisfies $\mathcal{V}_{r_{k_{2}}}(q)\geqslant\ln|\Gamma_{k_{1}}|-\delta^{\frac{1}{2}(1-\frac{n}{2p})}\geqslant\ln|\Gamma_{k_{1}}|-\delta_{1}$, then by the almost monotonicity formula \eqref{eq2.2}, we have $T_{k}^{4\delta_{1}}(q)=0$ for all scales $k\in[k_{1},k_{2}]\cap\mathbb{N}$.\\
	\indent By Proposition \ref{prop4.1}, for each $k\in[k_{1},k_{2}]$, there exists a diffeomorphism $\phi_{k}:A_{\frac{1}{2}r_{k},2r_{k}}(0)\to U_{k}$, where $0\in\mathbb{R}^{n}/\Gamma_{k}$ and $A_{\frac{1}{2}r_{k},2r_{k}}(q)\subseteq U_{k}\subseteq A_{\frac{1}{2}(1-\epsilon)r_{k},2(1+\epsilon)r_{k}}(q)$, such that
	\begin{align}\label{eq4.8}
		\left\|\phi_{k}^{*}g_{ij}-\delta_{ij}\right\|_{C^{0}(A_{\frac{1}{2}r_{k},2r_{k}}(0))}+r_{k}^{\alpha}[\phi_{k}^{*}g_{ij}]_{C^{\alpha}(A_{\frac{1}{2}r_{k},2r_{k}}(0))}\leqslant\epsilon.
	\end{align}
	In particular, this implies that $\Gamma_{k}=\Gamma$ is independent of $k$.\\
	\indent Then we consider the inverse maps $\phi_{k}^{-1}:U_{k}\to A_{\frac{1}{2}r_{k},2r_{k}}(0)$.\ Observe that after composing $\phi_{k}$ with a rotation of $\mathbb{R}^{n}/\Gamma$, we can assume that 
	\begin{align}\label{eq4.9}
		|\phi_{k}^{-1}(x)-\phi_{l}^{-1}(x)|\leqslant\epsilon r_{k},\ \forall x\in U_{k}\cap U_{l}.
	\end{align}
	Now we can choose a sufficiently small $\epsilon(n,v)$, such that for any $0<\epsilon\leqslant\epsilon(n,v)$, if $y\in\mathbb{R}^{n}/\Gamma$, then $B(y,4\epsilon|y|)=B(0^{n},4\epsilon|y|)$ where $0^{n}\in\mathbb{R}^{n}$.\\
	\indent For each $k$, let $\eta_{k}:U_{k}\to\mathbb{R}$ be a smooth cutoff function with
	\[\eta_{k}(x):=
	\begin{cases}
		1 & \text{if } x\in A_{\frac{5}{8}r_{k},\frac{15}{8}r_{k}}(q),\\
		0 & \text{if } x\notin A_{\frac{1}{2}r_{k},2r_{k}}(q)
	\end{cases}\]
and $|\nabla\eta_{k}|<10r_{k}^{-1}$.\ If we let $\eta(x)=\sum_{k}\eta_{k}(x)$, then
\[\psi_{k}:=\frac{\eta_{k}}{\eta}:U_{k}\to\mathbb{R}\]
satisfies $\sum_{k}\psi_{k}(x)=1$ and $|\nabla\psi_{k}|<200r_{k}^{-1}$.\ Now we define the map 
\[\phi^{-1}:U=\bigcup_{k}U_{k}\to A_{\frac{1}{2}r_{k_{2}},2r_{k_{1}}}(0)\] by $\phi^{-1}(x)=\sum_{k}\psi_{k}(x)\phi_{k}^{-1}(x)$.\ Notice that all the $\phi_{k}^{-1}(x)$ live in a Euclidean ball, so the convex combination is well defined.\ On each domain $U_{k}$, $\phi^{-1}$ and $\phi_{k}^{-1}$ are $C^{1}$-close by \eqref{eq4.8} and \eqref{eq4.9}.\ Thus, $\phi^{-1}:U\to A_{\frac{1}{2}r_{k_{2}},2r_{k_{1}}}(0)$ is a diffeomorphism.\ Again by \eqref{eq4.8} and \eqref{eq4.9}, we can derive the desired estimate:
\[\left\|\phi^{*}g_{ij}-\delta_{ij}\right\|_{C^{0}(A_{\frac{1}{2}r_{k_{2}},2r_{k_{1}}}(0))}+r_{k_{2}}^{\alpha}[\phi^{*}g_{ij}]_{C^{\alpha}(A_{\frac{1}{2}r_{k_{2}},2r_{k_{1}}}(0))}\leqslant400\epsilon.\]
This completes the proof.
\end{proof}
\indent To end this section, we prove the following lemma which will be used to control the number of the neck regions and body regions.
\begin{lem}\label{lem4.6}
	For any sufficiently small $\delta\leqslant\delta(n,p,v)$, there exists a scale $k=k(n,p,v,\Lambda,\delta)$ with the following property.\ Let $M^{n}$ be a manifold with $\int_{M^{n}}|Rm|^{\frac{n}{2}}\leqslant\Lambda$, $\int_{M^{n}}|Ric|^{p}\leqslant\delta^{2p-n}$, $vol(B(x,r))\geqslant vr^{n}\ \text{for all}\ x\in M^{n},\ r\leqslant\delta^{-1}$ and $\mathcal{V}_{1}(q)<\ln N-\delta^{\frac{1}{2}(1-\frac{n}{2p})}$ for some integer $N\geqslant2$.\ Then
	\[\mathcal{V}_{r_{k}}(q)<\ln(N-1)+\delta^{\frac{1}{2}(1-\frac{n}{2p})}.\]
\end{lem}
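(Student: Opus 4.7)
The plan is a compactness/contradiction argument combined with an analysis of the asymptotic cone at infinity of the limit space. Suppose the conclusion fails for every $k$: for each $k$ choose a manifold $(M_k^n, g_k, q_k)$ satisfying the hypotheses together with an integer $N_k \geqslant 2$ such that $\mathcal{V}_1(q_k) < \ln N_k - \delta^{\frac{1}{2}(1-\frac{n}{2p})}$ yet $\mathcal{V}_{r_k}(q_k) \geqslant \ln(N_k - 1) + \delta^{\frac{1}{2}(1-\frac{n}{2p})}$. The non-collapsing gives $\mathcal{V}_1(q_k) \leqslant \ln(\omega_n / v)$, while the almost monotonicity \eqref{eq2.2} applied between $r_k$ and $1$ yields $\mathcal{V}_{r_k}(q_k) \leqslant \mathcal{V}_1(q_k) + C(n,p,v)\delta^{1-\frac{n}{2p}}$; combined with the assumed lower bound on $\mathcal{V}_{r_k}(q_k)$, this forces $N_k \leqslant C(n,v)$ for $\delta \leqslant \delta(n,p,v)$ (since the exponent $\frac{1}{2}(1-\frac{n}{2p})$ is half of $1-\frac{n}{2p}$, so $\delta^{\frac{1}{2}(1-\frac{n}{2p})}$ dominates $\delta^{1-\frac{n}{2p}}$ when $\delta$ is small). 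Pass to a subsequence with $N_k \equiv N$ a fixed integer.

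Rescale by $\tilde{g}_k := r_k^{-2} g_k$. The $L^{n/2}$ curvature bound is scale invariant, while $\int |Ric_{\tilde g_k}|^p \leqslant (r_k\delta)^{2p-n} \to 0$, and the noncollapsing scale $r_k^{-1}\delta^{-1} \to \infty$. By Gromov precompactness, pass to a pointed GH limit $(M_k^n, \tilde g_k, q_k) \xrightarrow{pGH} (X_\infty, d_\infty, q_\infty)$. The scale identity $\mathcal{V}_{s, \tilde g_k}(q_k) = \mathcal{V}_{s r_k, g_k}(q_k)$ and volume convergence (Theorem \ref{thm2.16} (a1)) translate the hypotheses into
\[
\mathcal{V}_1(q_\infty) \geqslant \ln(N-1) + \delta^{\frac{1}{2}(1-\frac{n}{2p})}, \qquad \mathcal{V}_R(q_\infty) \leqslant \ln N - \delta^{\frac{1}{2}(1-\frac{n}{2p})} \text{ for each fixed } R > 0.
\]
Since the Ricci integral vanishes in the limit, \eqref{eq2.2} becomes exact monotonicity, so $R \mapsto \mathcal{V}_R(q_\infty)$ is genuinely nondecreasing, and $L := \lim_{R \to \infty} \mathcal{V}_R(q_\infty)$ exists with $L \in [\ln(N-1) + \delta^{\frac{1}{2}(1-\frac{n}{2p})},\ \ln N - \delta^{\frac{1}{2}(1-\frac{n}{2p})}]$.

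To finish, analyze the asymptotic cone at infinity. For any sequence $R_j \to \infty$, extract a subsequential limit $(X_\infty, R_j^{-2} d_\infty, q_\infty) \xrightarrow{pGH} (X^*, d^*, q^*)$. Because $\mathcal{V}_{2R_j}(q_\infty)$ and $\mathcal{V}_{R_j}(q_\infty)$ both tend to $L$, Theorem \ref{thm2.6} applied to rescaled pieces forces $X^*$ to be a metric cone. Because $\int_{X_\infty} |Rm|^{n/2} \leqslant \Lambda < \infty$, the curvature mass on the annuli $A_{R_j, 2R_j}(q_\infty)$ tends to zero as $R_j \to \infty$, so by scale invariance $X^*$ carries no $L^{n/2}$ curvature on its regular part. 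Combined with Ricci-flatness inherited from $\int |Ric|^p = 0$, the local version of the orbifold limit Theorem \ref{thm2.17} applied annulus by annulus shows that $X^*$ is flat off a discrete singular set; the metric cone structure and non-collapsing then force $X^* = \mathbb{R}^n/\Gamma$ for some finite $\Gamma \subseteq O(n)$. Hence $L = \ln|\Gamma|$ must be the logarithm of a positive integer, but $L$ lies strictly between $\ln(N-1)$ and $\ln N$, a contradiction.

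The principal obstacle is this last step: Theorem \ref{thm2.17} is stated for compact sequences with a diameter bound, so applying it to the non-compact $X_\infty$ requires working scale by scale on annular pieces $A_{R_j, 2R_j}(q_\infty)$ (which have bounded geometry after rescaling by $R_j^{-1}$) and combining the vanishing of curvature concentration at infinity with the $\epsilon$-regularity Theorem \ref{thm2.10} to upgrade the GH convergence to $C^\alpha$ convergence on the regular part. Once this local-to-global passage is secure, the integer constraint $L = \ln|\Gamma|$ immediately contradicts $L \in (\ln(N-1), \ln N)$ and completes the proof.
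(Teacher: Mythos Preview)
Your contradiction/blow-down argument is sound in outline and reaches the same integer-quantization punchline, but it is considerably more roundabout than the paper's proof. The paper argues directly on the fixed manifold: by Lemma~\ref{lem4.4} there are at most $N(n,p,v,\delta')$ bad scales, so one can locate a specific good scale $k=k(n,p,v,\Lambda,\delta)$ with $|\mathcal{V}_{2r_k}(q)-\mathcal{V}_{r_k}(q)|\leqslant\delta'$; Proposition~\ref{prop4.1} (after rescaling) then forces $|\mathcal{V}_{r_k}(q)-\ln|\Gamma||<\tfrac{1}{2}\delta^{\frac{1}{2}(1-\frac{n}{2p})}$, and the almost monotonicity plus the hypothesis $\mathcal{V}_1(q)<\ln N-\delta^{\frac{1}{2}(1-\frac{n}{2p})}$ give $|\Gamma|\leqslant N-1$. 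No limits, no asymptotic cones, and the scale $k$ is explicit.

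Your route replaces Lemma~\ref{lem4.4} by compactness (the stabilization $\mathcal{V}_R(q_\infty)\to L$) and then has to identify the asymptotic cone as $\mathbb{R}^n/\Gamma$. The obstacle you flag is real: Theorem~\ref{thm2.17} concerns closed manifolds, and $|Rm|$ is not defined on the singular limit $X_\infty$, so invoking ``$\int_{X_\infty}|Rm|^{n/2}\leqslant\Lambda$'' and applying Theorem~\ref{thm2.17} ``annulus by annulus'' is not quite legitimate as stated. The clean repair is a diagonalization: realize the blow-down as a pointed limit of rescalings $(M_{k_j},s_j^{-2}g_{k_j},q_{k_j})$ of the original manifolds with $|\mathcal{V}_4-\mathcal{V}_2|\to 0$, and then you are exactly in the setting of Proposition~\ref{prop4.1}, whose proof already contains the flat-cone identification you need. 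Once you do that, though, you have essentially rebuilt the paper's argument inside a contradiction wrapper. (A minor point: your claimed bound $\mathcal{V}_R(q_\infty)\leqslant\ln N-\delta^{\frac{1}{2}(1-\frac{n}{2p})}$ is off by the almost-monotonicity defect $C\delta^{1-\frac{n}{2p}}$; this is harmless for the final strict inequality since $\delta^{1-\frac{n}{2p}}\ll\delta^{\frac{1}{2}(1-\frac{n}{2p})}$.)
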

\begin{proof}
	Let $\delta\leqslant\delta(n,p,v)$ satisfy Lemma \ref{lem4.4}.\ By Proposition \ref{prop4.1}, we can choose $\delta^{\prime}(n,p,v,\Lambda,\delta)$ such that if $\int_{M^{n}}|Rm|^{\frac{n}{2}}\leqslant\Lambda$, $\int_{M^{n}}|Ric|^{p}\leqslant(\delta^{\prime})^{2p-n}$, $vol(B(x,r))\geqslant vr^{n}\ \text{for all}\ x\in M^{n}, r\leqslant(\delta^{\prime})^{-1}$ and $T_{1}^{\delta^{\prime}}(q)=0$, then \[|\mathcal{V}_{1}(q)-\ln|\Gamma||<\frac{1}{2}\delta^{\frac{1}{2}(1-\frac{n}{2p})}.\]
	By rescaling, we see that if $\int_{M^{n}}|Rm|^{\frac{n}{2}}\leqslant\Lambda$, $\int_{M^{n}}|Ric|^{p}\leqslant\delta^{2p-n}$, $vol(B(x,r))\geqslant vr^{n}\ \text{for all}\ x\in M^{n}, r\leqslant\delta^{-1}$ and $|\mathcal{V}_{2r_{k}}(q)-\mathcal{V}_{r_{k}}(q)|\leqslant\delta^{\prime}$ with $k\geqslant k(n,p,v,\Lambda,\delta)$, then \[|\mathcal{V}_{r_{k}}(q)-\ln|\Gamma||<\frac{1}{2}\delta^{\frac{1}{2}(1-\frac{n}{2p})}.\]
	Here, the scale $k\geqslant k(n,p,v,\Lambda,\delta)$ satisfies $\frac{\delta}{\delta^{\prime}}r_{k}\leqslant1$.\\
	\indent Now, we can apply Lemma \ref{lem4.4} to see that the scale $k\geqslant k(n,p,v,\Lambda,\delta)$ such that $|\mathcal{V}_{2r_{k}}(q)-\mathcal{V}_{r_{k}}(q)|\leqslant\delta^{\prime}$ does exist.\ Hence, in the context of the lemma, we get
	\[|\mathcal{V}_{r_{k}}(q)-\ln|\Gamma||<\frac{1}{2}\delta^{\frac{1}{2}(1-\frac{n}{2p})}.\]
	However, if $\mathcal{V}_{1}(q)<\ln N-\delta^{\frac{1}{2}(1-\frac{n}{2p})}$, then by the almost monotonicity formula \eqref{eq2.2}, $\mathcal{V}_{r_{k}}(q)<\ln N-\delta^{\frac{1}{2}(1-\frac{n}{2p})}+C(n,p,v)\delta^{1-\frac{n}{2p}}<\ln N-\frac{1}{2}\delta^{\frac{1}{2}(1-\frac{n}{2p})}$.\\
	\indent Thus, $\ln|\Gamma|<\ln N$ and $|\Gamma|\leqslant N-1$, which completes the proof.
\end{proof}

\subsection{Bubble tree structure}
In this subsection, we will construct the bubble tree structure and prove our diffeomorphism finiteness Theorem \ref{thm1.2}.\\
\indent We have built the neck regions of our bubble tree in Theorem \ref{thm4.5}.\ What is left is to construct the various body regions.\ The body regions are designed to have uniform lower bound on harmonic radius.\ By the proof of Proposition \ref{prop4.1}, we see that if the corresponding group $\Gamma$ is a trivial group, then we do not need to consider the annulus (neck) $A_{\epsilon r,2r}(q)$.\ Indeed, the ball $B(q,2r)$ is GH-close to $B(0^{n},2r)$ and serves as a body region.\ So, we will first assume $|\Gamma|\geqslant2$ and try to apply Lemma \ref{lem4.6}.\ We begin with the following lemma.
\begin{lem}\label{lem4.7}
	For any sufficiently small $\delta\leqslant\delta(n,p,v)$, there exists $r_{0}=r_{0}(n,p,v,\Lambda,\delta)$ and $N=N(n,p,v,\Lambda,\delta)$ with the following property.\ Let $M^{n}$ be a manifold with $\int_{M^{n}}|Rm|^{\frac{n}{2}}\leqslant\Lambda$, $\int_{M^{n}}|Ric|^{p}\leqslant\delta^{2p-n}$ and $vol(B(x,r))\geqslant vr^{n}\ \text{for all}\ x\in M^{n}\ \text{and}\ r\leqslant\delta^{-1}$.\ Then there exist points $\{x_{j}\}_{1}^{m}\subset B(q,1)$ with $m\leqslant N$, and scales $\{k_{j}\}_{1}^{m}\subset\mathbb{N}$ with $r_{j}:=r_{k_{j}}>r_{0}$ such that
	\begin{enumerate}
		\item $T_{k_{j}}^{\delta}(x_{j})=0$;
		\item if $s_{j}$ denotes the largest integer such that $\mathcal{V}_{r_{s_{j}}}(x_{j})\geqslant\ln|\Gamma_{j}|-\delta^{\frac{1}{2}(1-\frac{n}{2p})}$, then for any $x\in B(x_{j},2r_{s_{j}})$, we have $\mathcal{V}_{\frac{1}{2}r_{s_{j}}}(x)<\ln|\Gamma_{j}|-\delta^{\frac{1}{2}(1-\frac{n}{2p})}$;
		\item if $x\in B(q,1)\setminus\bigcup_{j}B(x_{j},r_{j})$, then $r_{h}^{\alpha}(x)>r_{0}$.
	\end{enumerate}
\end{lem}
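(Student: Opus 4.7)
The plan is an iterative Vitali-type covering argument stratified by the finitely many density levels $N \in \{2,\dots,N_0\}$ that can arise as $|\Gamma|$, where $N_0 = N_0(n,v)$ satisfies $\mathcal{V}_1(x) \leqslant \ln N_0$ on $B(q,1)$ by the non-collapsing hypothesis. I would fix $\delta \leqslant \delta(n,p,v)$ small enough that Lemmas \ref{lem4.4} and \ref{lem4.6} apply for every $N \leqslant N_0$, let $K = K(n,p,v,\delta)$ bound the number of bad scales from Lemma \ref{lem4.4}, let $k_\ast = k_\ast(n,p,v,\Lambda,\delta)$ be the largest scale supplied by Lemma \ref{lem4.6} across $N \leqslant N_0$, and set $r_0 := 2^{-\max(k_\ast,K)-1}$. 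The \emph{easy case} is then immediate: if $\mathcal{V}_1(x) < \ln 2 - \delta^{\frac{1}{2}(1-\frac{n}{2p})}$, Lemma \ref{lem4.6} furnishes $k \leqslant k_\ast$ with $\mathcal{V}_{r_k}(x) < \delta^{\frac{1}{2}(1-\frac{n}{2p})}$, so $B(x,r_k)$ is almost Euclidean and Theorem \ref{thm2.10} yields $r_h^\alpha(x) > r_0$; only points with $\mathcal{V}_1(x) \geqslant \ln 2 - \delta^{\frac{1}{2}(1-\frac{n}{2p})}$ have to be included in the cover.

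For the main iteration, define the drop scale $s_N(x) := \sup\{k \in \mathbb{N}\colon \mathcal{V}_{r_k}(x) \geqslant \ln N - \delta^{\frac{1}{2}(1-\frac{n}{2p})}\}$, which is finite because smooth manifolds are locally Euclidean. I would process $N$ from $N_0$ down to $2$: while some point $x \in B(q,1) \setminus \bigcup_{i<j} B(x_i, r_{k_i})$ has $s_N(x) \geqslant 0$, pick $x_j$ to be a \emph{global} maximizer of $s_N$ over all of $B(q,1)$; then, because the almost-monotonicity formula \eqref{eq2.2} propagates the large density $\mathcal{V}_{r_{s_j}}(x_j) \geqslant \ln N - \delta^{\frac{1}{2}(1-\frac{n}{2p})}$ up to scale $1$ with error $o(1)$ as $\delta \to 0$, Lemma \ref{lem4.4} produces a good scale $k_j \leqslant K$ with $T_{k_j}^\delta(x_j) = 0$; Proposition \ref{prop4.1} combined with almost-monotonicity then forces $|\Gamma_{k_j}| = N$, so $r_{k_j} \geqslant r_0$. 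Set $|\Gamma_j| := N$ and $s_j := s_N(x_j)$.

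Properties (1) and (2) fall out of the construction: (1) is direct, and for (2) the global maximality at $x_j$ yields $s_N(x) \leqslant s_j$ for every $x \in B(q,1)$, so $\mathcal{V}_{\frac{1}{2}r_{s_j}}(x) = \mathcal{V}_{r_{s_j+1}}(x) < \ln|\Gamma_j| - \delta^{\frac{1}{2}(1-\frac{n}{2p})}$ on all of $B(q,1)$, a fortiori on $B(x_j, 2r_{s_j})$. Property (3) holds after all levels are exhausted: a still-uncovered $y$ must have $s_2(y) = -\infty$, i.e.\ $\mathcal{V}_1(y) < \ln 2 - \delta^{\frac{1}{2}(1-\frac{n}{2p})}$, reducing to the easy case. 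Finally, since every $x_j$ is chosen outside $\bigcup_{i<j} B(x_i, r_{k_i}) \supseteq \bigcup_{i<j} B(x_i, r_0)$, the centers are $r_0$-separated and a standard packing in $B(q,2)$ against the volume lower bound $v(r_0/2)^n$ gives $m \leqslant N(n,p,v,\Lambda,\delta)$. I expect the main technical obstacle to be the good-scale extraction, namely verifying that Lemma \ref{lem4.4}, the almost-monotonicity of $\mathcal{V}_r$, and Proposition \ref{prop4.1} can be coherently combined to produce a single $k_j \leqslant K$ with $T_{k_j}^\delta(x_j) = 0$ and $|\Gamma_{k_j}| = N$ exactly, rather than a group of some neighboring order.
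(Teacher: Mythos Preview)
Your proposal contains an internal inconsistency that produces a genuine gap. You require $x_j$ to be a \emph{global} maximizer of $s_N$ over all of $B(q,1)$ in order to deduce property (2), yet you also assert that ``every $x_j$ is chosen outside $\bigcup_{i<j} B(x_i, r_{k_i})$'' for the packing bound $m\leqslant N$. After the first selection at a given level these two requirements conflict: the global maximizer of $s_N$ over $B(q,1)$ may well lie in an already-covered ball (so the procedure repeats points and never terminates), and if you instead maximize only over the uncovered region, property (2) can fail. Indeed, a point $x\in B(x_j,2r_{s_j})$ lying just inside a previously chosen ball $B(x_i,r_{k_i})$ is not controlled by the maximality of $x_j$, and there is no a priori comparison between $s_N(x)$ and $s_N(x_j)$ for points at mutual distance $\sim r_{s_j}$: volume ratios at that scale can differ drastically even for very close points. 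The same tension also undermines the claim $|\Gamma_{k_j}| = N$, which needs the upper bound $\mathcal{V}_{r_k}(x_j) < \ln(N{+}1) - \delta^{\frac{1}{2}(1-\frac{n}{2p})}$; that follows from $x_j$ being uncovered after level $N{+}1$ has terminated, not from global maximality over $B(q,1)$.

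The paper avoids this tension by a two-step scheme that never maximizes globally. It first introduces an auxiliary parameter $\eta=\eta(n,p,v,\Lambda,\delta)$ and uses Lemma~\ref{lem4.4} to find for every $x$ a good scale $k_x$ lying in a \emph{fixed window} $[k(\eta),4k(\eta)]$; a Vitali cover $\{B(y_j,r_j)\}$ with all radii comparable then has bounded cardinality by volume doubling, and Proposition~\ref{prop4.1} supplies the harmonic-radius bound on each annulus $A_{\epsilon r_j,2r_j}(y_j)$, giving (3). For (1) and (2) the paper works \emph{locally} inside each $B(y_j,4\sqrt{\epsilon}r_j)$: it maximizes the drop scale $s_j(x)$ not over the whole ball but over the cushioned subset $W_j=\{x: B(x,4r_{s_j(x)})\subset B(y_j,4\sqrt{\epsilon}r_j)\}$. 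This definition is the crux: if some $x\in B(x_j,2r_{s_j})$ had $s_j(x)>s_j$, then $r_{s_j(x)}\leqslant\tfrac12 r_{s_j}$ forces $B(x,4r_{s_j(x)})\subset B(x_j,4r_{s_j})\subset B(y_j,4\sqrt{\epsilon}r_j)$, so $x\in W_j$, contradicting the choice of $x_j$. Your stratified global-maximizer scheme is missing an analogue of this ``room to spare'' mechanism, and that is precisely what is needed to make property (2) go through.
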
 
\begin{proof}
	Let $\delta\leqslant\delta(n,p,v)$ satisfy Lemma \ref{lem4.4} with $\eta=\eta(n,p,v,\Lambda,\delta)$ to be chosen later.\ For any $x\in M^{n}$, there exists a scale $k_{x}\in[k(n,p,v,\eta),4k(n,p,v,\eta)]$ so that $T_{k_{x}}^{\eta}(x)=0$.\ Pick a covering $\{B(x,r_{k_{x}})\}$ of $B(q,1)$ and choose a finite subcovering $\{B(y_{j},r_{j})\}_{1}^{m}$, where $r_{j}=r_{k_{j}}$, $k_{j}=k_{y_{j}}$ and all the balls $\{B(y_{j},\frac{1}{10}r_{j})\}_{1}^{m}$ are disjoint.\ Thus, $m\leqslant N(n,p,v,\eta)$ by volume doubling property \eqref{eq2.4}.\\
	\indent By Proposition \ref{prop4.1} and the proof of Lemma \ref{lem4.6}, for any sufficiently small $\epsilon=\epsilon(n,p,v,\delta)$, we can choose $\eta(n,p,v,\Lambda,\epsilon,\delta)$ so that for each $x\in A_{\epsilon r_{j},2r_{j}}(y_{j})$, we have $r_{h}^{\alpha}(x)\geqslant r(n,v)\epsilon r_{j}>r_{0}(n,p,v,\epsilon,\eta)$ and $|\mathcal{V}_{\frac{1}{2}\sqrt{\epsilon}r_{j}}(y_{j})-\ln|\Gamma_{j}||<\frac{1}{2}\delta^{\frac{1}{2}(1-\frac{n}{2p})}$.\ Also, $\epsilon$ and $\eta$ can be chosen to satisfy $T_{k_{j}}^{\delta}(x)=0$ for all $x\in B(y_{j},4\sqrt{\epsilon}r_{j})$.\\
	\indent For each $x\in B(y_{j},4\sqrt{\epsilon}r_{j})$, let $s_{j}(x)$ denote the largest integer such that $\mathcal{V}_{r_{s_{j}(x)}}(x)\geqslant\ln|\Gamma_{j}|-\delta^{\frac{1}{2}(1-\frac{n}{2p})}$.\ Here, we need $\delta^{\frac{1}{2}(1-\frac{n}{2p})}\ll\ln2$.\ Consider 
	\[W_{j}:=\{x\in B(y_{j},4\sqrt{\epsilon}r_{j}):B(x,4r_{s_{j}(x)})\subset B(y_{j},4\sqrt{\epsilon}r_{j})\}.\]
	Note that $r_{s_{j}(y_{j})}\leqslant\sqrt{\epsilon}r_{j}$, so $y_{j}\in W_{j}$.\\ 
	\indent Now let $s_{j}:=\max\{s_{j}(x):x\in W_{j}\}$ with $x_{j}$ the corresponding point, i.e., $s_{j}(x_{j})=s_{j}$.\ Since $B(x_{j},4r_{s_{j}})\subset B(y_{j},4\sqrt{\epsilon}r_{j})$, we know that for any $x\in B(x_{j},2r_{s_{j}})$, $\mathcal{V}_{r_{s_{j}+1}}(x)=\mathcal{V}_{\frac{1}{2}r_{s_{j}}}(x)<\ln|\Gamma_{j}|-\delta^{\frac{1}{2}(1-\frac{n}{2p})}$.\\
	\indent Consider the collection of balls $\{B(x_{j},r_{j})\}_{1}^{m}$.\ Conditions (1) and (2) are satisfied by construction.\ If $x\in B(q,1)\setminus\bigcup_{j}B(x_{j},r_{j})$, then since $B(x_{j},\frac{3}{2}r_{j})\supseteq B(y_{j},r_{j})$ and $\{B(y_{j},r_{j})\}_{1}^{m}$ cover $B(q,1)$, we have $x\in A_{r_{j},\frac{3}{2}r_{j}}(x_{j})\subseteq A_{\frac{1}{2}r_{j},2r_{j}}(y_{j})$, which implies $r_{h}^{\alpha}(x)\geqslant r_{0}(n,p,v,\Lambda,\delta)$.\ This completes the proof.
\end{proof}
\indent By Lemma \ref{lem4.7} and Theorem \ref{thm2.12}, the open set $B(q,1)\setminus\overline{\bigcup_{j}B(x_{j},r_{j})}$, which will be referred as a body region, has finite diffeomorphism types.\\
\indent We are now in the position to prove the following bubble tree decomposition result, which is stronger than Theorem \ref{thm1.2}.\ The proof is the same as the one in Cheeger-Naber's paper \cite{cheeger2015regularity}, since we have established all the tools they used in constructing the bubble tree.

\begin{thm}\label{thm4.8}
	 Given $p>\frac{n}{2}$ and positive constants $v,D,\lambda$ and $\Lambda$, if $(M^{n},g)$ is a closed Riemannian manifold satisfying \eqref{eq1.2}, \eqref{eq1.4}, \eqref{eq1.6} and \eqref{eq1.7}, then there exists a decomposition of $M^{n}$
	 \begin{align}\label{eq4.10}
	 	M^{n}=\mathcal{B}_{1}^{0}\cup\bigcup_{i_{1}=1}^{N_{1}}\mathcal{N}_{i_{1}}^{1}\cup\bigcup_{i_{1}=1}^{N_{1}}\mathcal{B}_{i_{1}}^{1}\cup\cdots\cup\bigcup_{i_{m}=1}^{N_{m}}\mathcal{N}_{i_{m}}^{m}\cup\bigcup_{i_{m}=1}^{N_{m}}\mathcal{B}_{i_{m}}^{m}
	 \end{align}
 into open sets with the following properties:
 \begin{enumerate}
 	\item if $x\in\mathcal{B}_{i}^{l}$, then $r_{h}^{\alpha}(x)\geqslant r_{0}(n,p,v,\lambda,\Lambda,D)diam(\mathcal{B}_{i}^{l})$;
 	\item each neck $\mathcal{N}_{i}^{l}$ is diffeomorphic to $(0,1)\times S^{n-1}/\Gamma_{i}^{l}$, for some $\Gamma_{i}^{l}\subset O(n)$ with $|\Gamma_{i}^{l}|\leqslant C(n,v)$;
 	\item $\mathcal{N}_{i}^{l}\cap\mathcal{B}_{i}^{l}$ is diffeomorphic to $(0,1)\times S^{n-1}/\Gamma_{i}^{l}$;
 	\item $\mathcal{B}_{j}^{l-1}\cap\mathcal{N}_{i}^{l}$ are either empty or diffeomorphic to $(0,1)\times S^{n-1}/\Gamma_{i}^{l}$;
 	\item $N_{l}\leqslant N(n,p,v,\lambda,\Lambda,D)$ and $m\leqslant m(n,v)$.
 \end{enumerate}
\end{thm}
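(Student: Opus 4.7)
The plan is to construct the decomposition \eqref{eq4.10} by iterating Lemma \ref{lem4.7} at progressively finer rescaled balls, using the Neck Theorem \ref{thm4.5} to build the annular connectors between successive scales and using Lemma \ref{lem4.6} to bound the depth of the nesting. After an initial rescaling so that $\mathrm{diam}(M^n) \leq 1$ and the hypotheses of Lemmas \ref{lem4.6} and \ref{lem4.7} hold with the same small $\delta = \delta(n,p,v)$, it suffices by \eqref{eq2.4} to construct the decomposition inside each of a bounded number of unit balls $B(q,1)$ covering $M^n$.

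At level $0$, I apply Lemma \ref{lem4.7} to $B(q,1)$, which produces bubble centers $\{x_{i_1}^1\}_{i_1=1}^{N_1}$ with scales $r_{i_1}^1 := r_{k_{i_1}^1}$, and I declare the complement $\mathcal{B}_1^0 := B(q,1) \setminus \bigcup_{i_1} \overline{B(x_{i_1}^1, 2r_{s_{i_1}^1})}$ to be the level-$0$ body; by part (3) of that lemma it carries the uniform harmonic radius bound. For each $i_1$, part (2) of Lemma \ref{lem4.7} supplies a scale $s_{i_1}^1$ with $\mathcal{V}_{r_{s_{i_1}^1}}(x_{i_1}^1) \geq \ln|\Gamma_{i_1}^1| - \delta^{\frac{1}{2}(1-\frac{n}{2p})}$, so the Neck Theorem \ref{thm4.5} applied between scales $k_{i_1}^1$ and $s_{i_1}^1$ delivers a diffeomorphism from $A_{\frac{1}{2} r_{s_{i_1}^1},\, 2r_{i_1}^1}(0) \subset \mathbb{R}^n/\Gamma_{i_1}^1$ onto a region $\mathcal{N}_{i_1}^1$, producing properties (2)--(4) at level $1$ (the two cylindrical ends are the overlaps with $\mathcal{B}_1^0$ and with the next-level body $\mathcal{B}_{i_1}^1$). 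Rescaling each $B(x_{i_1}^1, 2r_{s_{i_1}^1})$ to unit size and re-running the same procedure yields level-$2$ bubble centers, the body $\mathcal{B}_{i_1}^1$, and level-$2$ necks; I then iterate this scheme.

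The main obstacle is terminating the iteration, which is exactly what Lemma \ref{lem4.6} is designed to supply. By conclusion (2) of Lemma \ref{lem4.7}, after the rescaling at level $l{+}1$ the local volume function satisfies $\mathcal{V}_1 < \ln|\Gamma_{i_l}^l| - \delta^{\frac{1}{2}(1-\frac{n}{2p})}$ at the new base point, and Lemma \ref{lem4.6} then forces $|\Gamma_{i_{l+1}}^{l+1}| \leq |\Gamma_{i_l}^l| - 1$. Since $|\Gamma_{i_1}^1| \leq C(n,v)$ by Theorem \ref{thm2.16}(a1) applied to the cone cross-section arising in the proof of Proposition \ref{prop4.1}, at most $m \leq m(n,v)$ levels can occur. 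Once $|\Gamma_{i_l}^l| = 1$, the proof of Proposition \ref{prop4.1} identifies the rescaled ball itself as a body with uniform harmonic radius, so the recursion terminates with a leaf body $\mathcal{B}_{i_l}^l$ and no further neck attached below.

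Gathering everything: at each level the count $N_l$ of bubble centers is bounded by Lemma \ref{lem4.7}, the depth $m$ is bounded as above, each body region satisfies $r_h^\alpha \geq r_0 \, \mathrm{diam}$ giving (1), each neck is diffeomorphic to $(0,1) \times S^{n-1}/\Gamma$ by Theorem \ref{thm4.5} giving (2), and the overlaps of each neck with its adjacent body regions are exactly the two cylindrical ends, giving (3) and (4). Combined with Theorem \ref{thm2.12} applied to each body region and the observation that only finitely many subgroups $\Gamma \subset O(n)$ with $|\Gamma| \leq C(n,v)$ can arise (hence only finitely many cylinder types and finitely many mapping class types for the gluing along the cylindrical overlaps), this bubble tree decomposition yields the diffeomorphism finiteness bound of Theorem \ref{thm1.2}.
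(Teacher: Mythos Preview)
Your overall strategy coincides with the paper's: iterate Lemma \ref{lem4.7} to locate bubble centers, apply the Neck Theorem \ref{thm4.5} between the outer scale $k_j$ and the inner scale $s_j$, and use Lemma \ref{lem4.6} to force $|\Gamma^{l+1}|\leq|\Gamma^{l}|-1$, which gives the depth bound $m\leq m(n,v)$. The termination mechanism and the inductive scheme are correct in outline.

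There is, however, a concrete error in your definition of the body regions. You set
\[
\mathcal{B}_1^0 := B(q,1)\setminus\bigcup_{i_1}\overline{B(x_{i_1}^1,\,2r_{s_{i_1}^1})},
\]
removing balls of the \emph{inner} radius $r_{s_{i_1}^1}$, and then claim property (1) ``by part (3) of that lemma''. But Lemma \ref{lem4.7}(3) only controls $r_h^\alpha$ on $B(q,1)\setminus\bigcup_j B(x_j,r_{k_j})$, i.e.\ outside the balls of the \emph{outer} radius $r_{k_j}$. Your $\mathcal{B}_1^0$ therefore contains each annulus $A_{2r_{s_j},\,r_{k_j}}(x_j)$, and on that annulus the harmonic radius at a point at distance $d$ from $x_j$ is only of order $r_0(n,v)\,d$ (this is exactly what Proposition \ref{prop4.1}(1) gives, scale by scale). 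Near the inner boundary this is $\sim r_0\,r_{s_j}$, and since $s_j$ is the \emph{largest} integer with $\mathcal{V}_{r_{s_j}}(x_j)\geq\ln|\Gamma_j|-\delta^{\frac{1}{2}(1-\frac{n}{2p})}$, there is no uniform lower bound on $r_{s_j}$. Hence $r_h^\alpha/\mathrm{diam}(\mathcal{B}_1^0)$ is not bounded below and property (1) fails, so Theorem \ref{thm2.12} cannot be invoked on these bodies. The paper's fix is to cut the body at the outer scale, $\mathcal{B}_1^0:=M^n\setminus\overline{\bigcup_i B(x_i^1,r_{k_i^1})}$, let the neck $\mathcal{N}_i^1$ be the full annulus $A_{\frac{1}{2}r_{s_i^1},\,2r_{k_i^1}}(x_i^1)$, and place the next body $\mathcal{B}_i^1$ inside $B(x_i^1,r_{s_i^1})$; then the overlap $\mathcal{B}_1^0\cap\mathcal{N}_i^1$ is only the thin collar near $r_{k_i^1}$, and property (1) follows directly from Lemma \ref{lem4.7}(3). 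A secondary issue: rescaling so that $\mathrm{diam}\leq 1$ goes in the wrong direction, since $\int|Ric|^p$ scales like $(\text{length})^{n-2p}$ and you need to \emph{blow up} the metric to make $\int|Ric|^p\leq\delta^{2p-n}$; this is why the paper works on a manifold with $\mathrm{diam}\leq D\delta^{-1}$ rather than $\leq 1$.
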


\begin{proof}
Without loss of generality, we can assume $\lambda=1$.\ Fix $\epsilon=\epsilon(n,v)$ from Theorem \ref{thm4.5} with $\delta=\delta(n,p,v,\Lambda,D,\epsilon)=\delta(n,p,v,\Lambda,D)$ sufficiently small to satisfy Proposition \ref{prop4.1}, Theorem \ref{thm4.5}, Lemma \ref{lem4.6} and Lemma \ref{lem4.7}.\ Since we want to apply Lemma \ref{lem4.7} on the whole manifold $M^{n}$ instead of $B(q,1)$, we need $\delta$ to be dependent on $D$.\ After rescaling, we may suppose that $(M^{n},g)$ is a closed manifold with $diam(M^{n})\leqslant D\delta^{-1}$, $\int_{M^{n}}|Rm|^{\frac{n}{2}}\leqslant\Lambda$, $\int_{M^{n}}|Ric|^{p}\leqslant\delta^{2p-n}$, $vol(B(x,r))\geqslant vr^{n}\ \text{for all}\ x\in M^{n}$ and $ r\leqslant\delta^{-1}$.\\
\indent To begin with, we apply Lemma \ref{lem4.7} to produce a collection of balls $\{B(x_{i}^{1},r_{i}^{1})\}_{1}^{N_{1}}$ such that $r_{i}^{1}=r_{k_{i}^{1}}>r_{0}(n,p,v,\Lambda,D)D$, $N_{1}\leqslant N(n,p,v,\Lambda,D)$, $T_{k_{i}^{1}}^{\delta}(x_{i}^{1})=0$ and if $x\in M^{n}\setminus\bigcup_{i}B(x_{i}^{1},r_{i}^{1})$, then $r_{h}^{\alpha}(x)>r_{0}(n,p,v,\Lambda,D)D$.\ Moreover, if we denote by $\Gamma_{i}^{1}$ the group associated to $B(x_{i}^{1},r_{1}^{1})$, then if $s_{i}^{1}$ denotes the largest integer such that $\mathcal{V}_{r_{s_{i}^{1}}}(x_{i}^{1})\geqslant\ln|\Gamma_{i}^{1}|-\delta^{\frac{1}{2}(1-\frac{n}{2p})}$, then for any $x\in B(x_{i}^{1},2r_{s_{i}^{1}})$, we have \[\mathcal{V}_{\frac{1}{2}r_{s_{i}^{1}}}(x)<\ln|\Gamma_{i}^{1}|-\delta^{\frac{1}{2}(1-\frac{n}{2p})}.\]
Define $\mathcal{B}_{1}^{0}=M^{n}\setminus\overline{\bigcup_{i}B(x_{i}^{1},r_{i}^{1})}$ as the first body region.\ Then we can write 
\begin{align}\label{eq4.11}
	M^{n}=\mathcal{B}_{1}^{0}\cup\bigcup_{i}B(x_{i}^{1},2r_{i}^{1}),
\end{align}
and $B(x_{i}^{1},2r_{i}^{1})\cap\mathcal{B}_{1}^{0}$ is diffeomorphic to $(0,1)\times S^{n-1}/\Gamma_{i}^{1}$ by Proposition \ref{prop4.1}.\\
\indent Now, let us inductively construct a decomposition of $M^{n}$,
\begin{align}\label{eq4.12}
	M^{n}=\mathcal{B}_{1}^{0}\cup\bigcup_{i_{1}=1}^{N_{1}}\mathcal{N}_{i_{1}}^{1}\cup\bigcup_{i_{1}=1}^{N_{1}}\mathcal{B}_{i_{1}}^{1}\cup\cdots\cup\bigcup_{i_{t}=1}^{N_{t}}\mathcal{N}_{i_{t}}^{t}\cup\bigcup_{i_{t}=1}^{N_{t}}\mathcal{B}_{i_{t}}^{t}\cup\bigcup_{a=1}^{N_{t+1}}B(x_{a},2r_{a}^{t+1})
\end{align}
with the following properties:
\begin{enumerate}
	\item if $x\in\mathcal{B}_{i}^{l}$, then $r_{h}^{\alpha}(x)\geqslant r_{0}(n,p,v,\Lambda,D)diam(\mathcal{B}_{i}^{l})$;
	\item each neck $\mathcal{N}_{i}^{l}$ is diffeomorphic to $(0,1)\times S^{n-1}/\Gamma_{i}^{l}$, for some $\Gamma_{i}^{l}\subset O(n)$ with $|\Gamma_{i}^{l}|\leqslant C(n,v)$;
	\item $\mathcal{N}_{i}^{l}\cap\mathcal{B}_{i}^{l}$ is diffeomorphic to $(0,1)\times S^{n-1}/\Gamma_{i}^{l}$;
	\item $\mathcal{B}_{j}^{l-1}\cap\mathcal{N}_{i}^{l}$ are either empty or diffeomorphic to $(0,1)\times S^{n-1}/\Gamma_{i}^{l}$;
	\item $N_{l}\leqslant N(n,p,v,\Lambda,D)$;
	\item if $\mathcal{B}_{j}^{l-1}\cap\mathcal{N}_{i}^{l}\neq\emptyset$, then $|\Gamma_{i}^{l}|\leqslant|\Gamma_{j}^{l-1}|-1$;
	\item $r_{a}^{t+1}=r_{k_{a}^{t+1}}$ with $T_{k_{a}^{t+1}}^{\delta}(x_{a})=0$, and $\mathcal{B}_{i}^{t}\cap B(x_{a},2r_{a}^{t+1})\subseteq A_{\frac{1}{2}r_{a}^{t+1},2r_{a}^{t+1}}(x_{a})$;
	\item if $s_{a}^{t+1}$ is the largest integer such that $\mathcal{V}_{r_{s_{a}^{t+1}}}(x_{a})\geqslant\ln|\Gamma_{a}^{t+1}|-\delta^{\frac{1}{2}(1-\frac{n}{2p})}$, then for any $x\in B(x_{a},2r_{s_{a}^{t+1}})$, we have $\mathcal{V}_{\frac{1}{2}r_{s_{a}^{t+1}}}(x)<\ln|\Gamma_{a}^{t+1}|-\delta^{\frac{1}{2}(1-\frac{n}{2p})}$.
\end{enumerate}
Note that \eqref{eq4.11} provides the case $t=0$.\ So let us suppose that the decomposition \eqref{eq4.12} has been built for $t\geqslant0$, and we are going to construct for $t+1$.\\
\indent We first use Theorem \ref{thm4.5} to see that there exists an open set
\[A_{\frac{1}{2}r_{s_{a}^{t+1}},2r_{k_{a}^{t+1}}}(x_{a})\subset\mathcal{N}_{a}^{t+1}\subset A_{\frac{1}{2}(1-\epsilon)r_{s_{a}^{t+1}},2(1+\epsilon)r_{k_{a}^{t+1}}}(x_{a})\]
and a diffeomorphism $\phi_{a}^{t+1}:\mathcal{N}_{a}^{t+1}\to A_{\frac{1}{2}r_{s_{a}^{t+1}},2r_{k_{a}^{t+1}}}(0)$ where $0\in\mathbb{R}^{n}/\Gamma_{a}^{t+1}$.\ By Lemma \ref{lem4.6} and condition (8), there exists a radius $r_{a}=r(n,p,v,\Lambda,\delta)r_{s_{a}^{t+1}}=r(n,p,v,\Lambda,D)r_{s_{a}^{t+1}}<1$ such that for any $x\in B(x_{a},2r_{s_{a}^{t+1}})$,
\[\mathcal{V}_{r_{a}}(x)<\ln(|\Gamma_{a}^{t+1}|-1)+\delta^{\frac{1}{2}(1-\frac{n}{2p})}.\]
\indent Now we apply Lemma \ref{lem4.7} to each ball $B(x_{a},r_{s_{a}^{t+1}})$ to construct a collection of balls $\{B(x_{aj},r_{aj}^{t+2})\}$ with $r_{aj}^{t+2}=r_{k_{aj}^{t+2}}$ and $T_{k_{aj}^{t+2}}^{\delta}(x_{aj})=0$.\ There are at most $N(n,p,v,\Lambda,D)$ such balls in total and for any $x\in B(x_{a},r_{s_{a}^{t+1}})\setminus\bigcup_{j}B(x_{aj},r_{aj}^{t+2})$, $r_{h}^{\alpha}(x)>r_{0}(n,p,v,\Lambda,D)r_{s_{a}^{t+1}}$.\ Also, from the proof of Lemma \ref{lem4.7}, we can assume that $r_{aj}^{t+2}<r(n,p,v,\Lambda,D)r_{s_{a}^{t+1}}=r_{a}$ and $|\mathcal{V}_{r_{aj}^{t+2}}(x_{aj})-\ln|\Gamma_{aj}^{t+2}||<\delta^{\frac{1}{2}(1-\frac{n}{2p})}$ where $\Gamma_{aj}^{t+2}$ is the corresponding group of $B(x_{aj},r_{aj}^{t+2})$.\\
\indent On the other hand, by the almost monotonicity formula \eqref{eq2.2}, we have
\[\mathcal{V}_{r_{aj}^{t+2}}(x_{aj})<\mathcal{V}_{r_{a}}(x_{aj})+C(n,p,v)\delta^{1-\frac{n}{2p}}<\ln(|\Gamma_{a}^{t+1}|-1)+2\delta^{\frac{1}{2}(1-\frac{n}{2p})},\]
which implies $|\Gamma_{aj}^{t+2}|\leqslant|\Gamma_{a}^{t+1}|-1$.\ If we put
\[\mathcal{B}_{a}^{t+1}=B(x_{a},r_{s_{a}^{t+1}})\setminus\overline{\bigcup_{j}B(x_{aj},r_{aj}^{t+2})},\]
and write
\[M^{n}=\mathcal{B}_{1}^{0}\cup\cdots\cup\bigcup_{i_{t}=1}^{N_{t}}\mathcal{N}_{i_{t}}^{t}\cup\bigcup_{i_{t}=1}^{N_{t}}\mathcal{B}_{i_{t}}^{t}\cup\bigcup_{a=1}^{N_{t+1}}\mathcal{N}_{a}^{t+1}\cup\bigcup_{a=1}^{N_{t+1}}\mathcal{B}_{a}^{t+1}\cup\bigcup B(x_{aj},2r_{aj}^{t+2}),\]
then one can check the inductive conditions below \eqref{eq4.12} easily by shrinking neck regions if necessary.\\
\indent Once we have the inductive decomposition, we could finish the proof immediately.\ In fact, by condition (6) and the upper bound $|\Gamma_{i}^{l}|\leqslant C(n,v)$, we see that the decomposition will stop within $m$ steps where $m\leqslant m(n,v)$.\ This completes the proof.
\end{proof}
Once we have the bubble tree decomposition, our diffeomorphism finiteness statement of Theorem \ref{thm1.2} can be proved directly.\ Notice that all body regions have finite diffeomorphism types by Theorem \ref{thm2.12} and any neck region is diffeomorphic to $(0,1)\times S^{n-1}/\Gamma$ for some $\Gamma\subseteq O(n)$ with $|\Gamma|<C(n,v)$.\ See \cite{cheeger2015regularity} for more details.\ Hence, we complete the proof of Theorem \ref{thm1.2}.\\

\noindent \textbf{Data Availability }This paper has no associated data.

	\nocite{*}
	\bibliographystyle{siam}
	\bibliography{refe}

\noindent E-mail: xqian22@m.fudan.edu.cn\\[2pt]
\emph{School of Mathematical Sciences, Fudan University, Shanghai
	200433, People's Republic of China}
\end{document}